\documentclass[12pt]{amsart}

\usepackage{amsmath}
\usepackage{amssymb}
\usepackage{amsthm}
\usepackage{mathrsfs}
\usepackage{comment,verbatim}
\usepackage{hyperref}

\usepackage{tikz}
\usepackage{tikz-cd}
\usepackage{rotating}

\usetikzlibrary{matrix}
\usetikzlibrary{arrows}
\usetikzlibrary{decorations.pathmorphing}
\usepackage{wrapfig}

\newcommand\soutpars[1]{\let\helpcmd\sout\parhelp#1\par\relax\relax}
\long\def\parhelp#1\par#2\relax{%
  \helpcmd{#1}\ifx\relax#2\else\par\parhelp#2\relax\fi%
}

\usepackage[all,cmtip]{xy}\usepackage{xcolor}
\usepackage{enumerate}
\usepackage{bm}
\usepackage{dsfont}
\usepackage{mathtools}
\usepackage[cal=euler]{mathalfa}
\usepackage[top=1in, bottom=1.25in, left=1.25in, right=1.25in]{geometry}
\usepackage{parskip}

\hypersetup{colorlinks=true,linkcolor=magenta,citecolor=blue}

\DeclareMathAlphabet{\mathpzc}{OT1}{pzc}{m}{it}

\theoremstyle{plain}
\newtheorem{theorem}{Theorem}[section]
\newtheorem*{theorem*}{Theorem}

\newtheorem{lemma}[theorem]{Lemma}
\newtheorem*{claim*}{Claim}
\newtheorem{proposition}[theorem]{Proposition}

\theoremstyle{definition}
\newtheorem{definition}[theorem]{Definition}

\newtheorem{egr}[theorem]{Example}
\newenvironment{example}{\begin{egr}}{
  \hfill \qedsymbol
  \end{egr}}

\newtheorem{remark}[theorem]{Remark}

\numberwithin{equation}{section}
\numberwithin{figure}{section}

\newcommand{\val}{\mathrm{val}}
\newcommand{\LieG}{\mathfrak{g}}
\newcommand{\LieZ}{\mathfrak{z}}

\newcommand{\Res}{\mathrm{Res}} 

\newcommand{\Cent}{\mathrm{Cent}}

\newcommand{\diag}{\textrm{diag}}
\newcommand{\Norm}{\mathrm{Nm}_{E/F}}  

\newcommand{\Sp}{\mathrm{Sp}}
\newcommand{\GL}{\mathrm{GL}}
\newcommand{\Gal}{\mathrm{Gal}}

\newcommand{\triv}{{\bf 1}}

\newcommand{\Lie}{\mathrm{Lie}}
\newcommand{\LieT}{\mathfrak{t}}
\newcommand{\LieH}{\mathfrak{h}}
\newcommand{\LieS}{\mathfrak{s}}

\newcommand{\bG}{\mathbf{G}}
\newcommand{\bS}{\mathbf{S}}
\newcommand{\bM}{\mathbf{M}}

\newcommand{\bZ}{\mathbf{Z}}

\newcommand{\bH}{\mathbf{H}}
\newcommand{\bT}{\mathbf{T}}

\newcommand{\bU}{\mathbf{U}}

\newcommand{\apart}{\mathscr{A}}

\newcommand{\buil}{\mathscr{B}}
\newcommand{\builred}{\mathscr{B}^{\mathrm{red}}}

\newcommand{\der}{\mathrm{der}}

\newcommand{\red}{\mathrm{red}}

\newcommand{\Gi}[2]{{#1}^{#2}}
\newcommand{\phii}[2]{{#1}_{#2}}
\newcommand{\X}[1]{X_{#1}}
\newcommand{\chara}[1]{\vartheta(#1)}
\newcommand{\type}[1]{\kappa(#1)}
\newcommand{\group}[1]{K(#1)} 
\newcommand{\groupp}[1]{K_+(#1)}
\newcommand{\roi}{\mathfrak{o}_F}
\newcommand{\maxid}{\mathfrak{p}_F}

\newcommand{\singlequasicharacter}{single-quasicharacter}

\begin{document}
\setlength{\parindent}{0pt}
\title[Lifting semisimple characters from fixed-point subgroups]{Lifting semisimple characters of $p$-adic types from fixed-point subgroups}
\author{Ad\`ele Bourgeois}

\address{School of Mathematics and Statistics, Carleton University, Ottawa, Canada}
\email{abour115@uottawa.ca}

\author{Monica Nevins}
\address{Department of Mathematics and Statistics, University of Ottawa, Ottawa, Canada}
 
\email{mnevins@uottawa.ca}
\thanks{The second author's research was supported NSERC Discovery Grant RGPIN-2025-05630 and the Institut Henri Poincar\'e (UAR 839 CNRS-Sorbonne Universit\'e) grant number ANR-10-LABX-59-01.}
\date{\today}

\begin{abstract}
\noindent Given a $p$-adic group $G=\bG(F)$ and a finite group $\Gamma\subset\mathrm{Aut}_F(\bG)$ such that the fixed-point subgroup $\bG^\Gamma$ is reductive, we show that every semisimple character (in the sense of Bushnell and Kutzko) of a type for $G^\Gamma = \bG^\Gamma(F)$ arises as the restriction of a semisimple character of a type for $G$. We achieve this by explicitly lifting the truncated Kim--Yu datum (or character-datum) that parametrizes the semisimple character for $G^\Gamma$ to a character-datum that parametrizes a semisimple character for $G$. Our proof, which is of independent interest, uses state-of-the-art techniques and, as a special case, defines a lift of a Howe factorization of a character of a maximal torus of $G^\Gamma$.
\end{abstract}

\keywords{$p$-adic groups, theory of types, fixed-point subgroups}
\subjclass{22E50}
\maketitle


\section{Introduction}

The theory of types, for a $p$-adic group $G$, realizes the Bernstein decomposition  of the category of smooth representations of $G$ into blocks: the irreducible representations in each block are characterized by the types $(K,\rho)$ they contain upon restriction, where $K$ ranges over certain compact open subgroups and $\rho$ are particular representations of $K$.  This theory is proven, and all types have been constructed, in many cases, including notably \cite{BushnellKutzko1993, Stevens2008, Yu2001,Kim2007,KimYu2017,Fintzen2021fix, Fintzen2021}.

The known constructions of types for positive-depth irreducible representations of a $p$-adic group $G$ involve two pieces of data:  a depth-zero part, which employs cuspidal representations of the reductive quotients of parahorics of $G$; and a positive-depth part, which is built from suitable quasicharacters of twisted Levi subgroups of $G$.  The output of the positive-depth part is a quasicharacter $\vartheta$ of a pro-$p$ subgroup $K_+$ which in \cite[Ch 3]{BushnellKutzko1993} was termed a \emph{semisimple character} of a type for $G$.

Now let $\bG$ be a reductive group over a nonarchimedean local field $F$ of residual characteristic $p$, and suppose $\Gamma$ is a finite group of automorphisms of $\bG$ of order prime to $p$.  Then $\bG^{[\Gamma]}:=(\bG^{\Gamma})^\circ$ is a reductive subgroup \cite{PrasadYu2002}. 
Prior work has shown that there exists a class of semisimple characters of types for $G=\bG(F)$, called $\Gamma$-stable in \cite{LathamNevins2023}, self-dual in \cite{MiyauchiStevens2014} and skew in \cite{Stevens2005}, whose restrictions to $G^{[\Gamma]}:=\bG^{[\Gamma]}(F)$ are semisimple characters of types for $G^{[\Gamma]}$.  This was a key step in classifying the supercuspidal representations of classical groups in \cite{Stevens2005,Stevens2008, MiyauchiStevens2014} and for $G_2$ in \cite{BlascoBlondel2012}.  

In this paper, we prove under mild restrictions on $p$ (see Section~\ref{sec:prelims}), for general connected reductive groups  $\bG$ that split over a tamely ramified extension of $F$, 
that conversely, every semisimple character of a type for $G^{[\Gamma]}$ is the restriction of a $\Gamma$-stable semisimple character of a type for $G$, answering an open question in \cite{LathamNevins2023}.    Our solution is in fact an explicit construction of such a lift, obtained as a consequence of a deep dive into the lifting of the sequence of quasicharacters.

Let us present our results in more detail.    When $p$ does not divide the order of the Weyl group of $\bG$ and $\bG$ splits over a tamely ramified extension of $F$, Fintzen showed that every smooth irreducible representation of $G$ contains a \emph{Kim--Yu type} \cite[Theorem 7.12]{Fintzen2021}. The construction of Kim--Yu types is in terms of Kim--Yu data \cite[\S7]{KimYu2017}. Omitting the depth-zero part of a Kim--Yu datum gives a \emph{character-datum} for $G$ (Definition \ref{def:chardatum}), which is precisely the input needed to construct the semisimple character of a Kim--Yu type. Thus, the study of semisimple characters of Kim--Yu types can be further reduced to the study of character-data for $G$.

 Our main theorem can be summarized as follows.

\begin{theorem*}[Theorem \ref{th:lift}]
Let $\Gamma \subset \mathrm{Aut}_F(\bG)$ be of order prime to $p$, and let $\bG^{[\Gamma]} = (\bG^\Gamma)^\circ$. Let $\Delta$ be a character-datum for $G^{[\Gamma]} = \bG^{[\Gamma]}(F)$ and denote its corresponding semisimple character by $\chara{\Delta}$, which is a character of a compact open subgroup $\groupp{\Delta} \subset G^{[\Gamma]}$. Then there exists a $\Gamma$-stable character-datum $\Sigma$ for $G$ such that $\groupp{\Sigma}^\Gamma = \groupp{\Delta}$ and $\chara{\Delta} = \chara{\Sigma}|_{\groupp{\Delta}}.$
\end{theorem*}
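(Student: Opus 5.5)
Write the character-datum for $G^{[\Gamma]}$ as $\Delta = \bigl((\bH^0\subsetneq \bH^1 \subsetneq\cdots\subsetneq \bH^d=\bG^{[\Gamma]}),\, y,\, (r_0<\cdots<r_{d}),\, (\phi_0,\dots,\phi_d)\bigr)$. Here the $\bH^i$ are tame twisted Levi subgroups, $y$ is a point of $\buil(H^0)$, and each $\phi_i$ is an $H^{i+1}$-generic quasicharacter of $H^i$ of depth $r_i$. The plan is to lift each piece in turn and then compare the groups and characters.

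\textbf{Step 1: lift the twisted Levi sequence.} Each $\bH^i$ is the connected centralizer in $\bG^{[\Gamma]}$ of its centre, or more precisely of a tame torus $\bZ_i=Z(\bH^i)^\circ$. Set $\bG^i := \Cent_\bG(\bZ_i)^\circ$. Since $\Gamma$ fixes $\bZ_i$ pointwise, the group $\bG^i$ is $\Gamma$-stable, it is a tame twisted Levi subgroup of $\bG$, and $(\bG^i)^{[\Gamma]}=\bH^i$. The resulting sequence is weakly increasing, so some terms may coincide. Where consecutive terms $\bG^i=\bG^{i+1}$ agree, I will merge the corresponding $\phi$'s into a product at the end. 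For the point, I take $x$ to be the image of $y$ under the $\Gamma$-equivariant embedding $\buil(H^0)\hookrightarrow\buil(G^0)^\Gamma$ (Prasad--Yu). Its Moy--Prasad filtrations satisfy $G^i_{x,r}\cap G^{[\Gamma]} = H^i_{y,r}$ for $r>0$, because $\Gamma$ has order prime to $p$. From this, $\groupp{\Sigma}^\Gamma=\groupp{\Delta}$ will follow once the depths are preserved.

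\textbf{Step 2: lift the quasicharacters.} This is the heart of the argument and the main obstacle. Each $\phi_i$ restricted to $H^i_{y,r_i}$ is realized by a dual element $X_i^*\in \Lie^*(\bZ_{i})_{-r_i}$, and genericity says that $X_i^*$ is $\bH^{i+1}$-generic of depth $-r_i$. I would use $\Gamma$-averaging of the dual Lie algebra, which is legitimate because $|\Gamma|$ is prime to $p$, to identify $\Lie^*(\bH^i)$ with the $\Gamma$-coinvariants of $\Lie^*(\bG^i)$. I then want to lift $X_i^*$ to a $\Gamma$-fixed element $\tilde X_i^*\in \Lie^*(Z(\bG^i))$ of the same depth that is $\bG^{i+1}$-generic. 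The naive lift can fail to be generic: roots of $\bG^{i+1}$ relative to $\bZ_i$ that restrict trivially to the $\Gamma$-fixed part are a problem, and depth jumps are possible. I would first try to show that every root of $\bG^{i+1}$ not in $\bG^i$ restricts nontrivially to $\bZ_i$, which holds by the definition of $\bG^i$ as a centralizer. The coroot pairing condition (GE1) then transfers up to a nonzero constant related to $|\Gamma|$-orbit sums, which is a $p$-adic unit. Condition (GE2) should follow from the hypotheses on $p$. Next I must extend the corresponding character from $H^i$ to a $\Gamma$-invariant quasicharacter $\tilde\phi_i$ of $G^i$ with the correct restriction. My approach is to factor through $G^i\to G^i/G^i_{\der}$, using that $H^i/(H^i\cap G^i_\der)$ is a subgroup of finite-index type in the torus $G^i/G^i_{\der}$, and to extend characters of tori while averaging over $\Gamma$. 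Where exact extension fails, I will modify the datum by twisting by a depth-zero or lower-depth character; this twisting changes the earlier $\phi_j$ but leaves $\vartheta$ unchanged. This is the Howe-factorization adjustment mentioned in the abstract, and I expect it to be the delicate part.

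\textbf{Step 3: verify.} I will show that $\Sigma=(\vec\bG, x, \vec r, \vec{\tilde\phi})$ is a $\Gamma$-stable character-datum. Its group is $\groupp{\Sigma}=G^0_{x,0+}G^1_{x,s_0}\cdots G^d_{x,s_{d-1}}$, and taking $\Gamma$-fixed points factor by factor, which is valid by the order-prime-to-$p$ decomposition of filtration subgroups, gives $\groupp{\Delta}$. The character $\chara{\Sigma}=\prod_i\hat{\tilde\phi}_i$ is defined via the $J$-decomposition $\mathfrak g^i=\mathfrak g^{i-1}\oplus\mathfrak n^i$. The complement $\mathfrak n^i$ is $\Gamma$-stable, and its $\Gamma$-fixed part is the corresponding complement for $\bH$. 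Therefore $\hat{\tilde\phi}_i$ restricts to $\hat\phi_i$, and hence $\chara{\Sigma}|_{\groupp{\Delta}}=\chara{\Delta}$.
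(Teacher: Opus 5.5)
There is a genuine gap in Steps 1--2. You fix the twisted Levi sequence of $\bG$ in advance, one level for each level of $\Delta$, namely $\bG^i=\Cent_{\bG}(Z(\bH^i)^\circ)$. Since the $\Gamma$-fixed part of $\Lie^*(Z(\bG^i)^\circ)$ is just $\Lie^*(\bZ_i)$, the lift of $X_i$ is then essentially forced, and you need it to be $\bG^{i+1}$-generic. Your justification only shows that each $\alpha\in\Phi(\bG^{i+1},\bT)\setminus\Phi(\bG^i,\bT)$ is nontrivial on $\bZ_i$. Genericity, however, requires $\val(X_i(H_\alpha))=-r_i$. The $H^{i+1}$-genericity of $X_i$ constrains $X_i(H_\alpha)$ only for those $\alpha$ whose restriction to a maximal torus $\bS\subseteq\bH^i$ is a root of $\bH^{i+1}$. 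For the remaining roots nothing prevents $X_i(H_\alpha)=0$ or $\val(X_i(H_\alpha))>-r_i$.

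This is exactly the situation of Example~\ref{Eg:weird}. Take $G=\GL(4,F)$, $\bH=\GL(2)\times\GL(2)$ and $\Delta=(\bH,y,t,\chi_0\circ\det\otimes\chi_1\circ\det)$, with $\chi_0\chi_1^{-1}$ of depth $t_1\in(0,t)$. Your recipe gives $\bG^0=\bH\subsetneq\bG$, so $\tilde\phi_0$ would have to be a $G$-generic character of $H$ agreeing with $\xi$ on $H_{y,0+}$. But every element of $\LieZ^*$ realizing $\xi$ on $H_{y,t}$ has the form $(a,b)$ with $\val(a-b)>-t$, so no such $\tilde\phi_0$ exists. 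Twisting by lower-depth characters cannot repair a failure at the top depth. The only other single-level candidate, $\bG^0=\bG$, also fails, because $\xi$ does not extend. A valid lift is $(\bH\subsetneq\bG,\,y,\,(t_1,t),\,(\triv\otimes(\chi_1\chi_0^{-1})\circ\det,\ \chi_0\circ\det))$. In general a single level of $\Delta$ must be split into several levels of $\Sigma$, with new intermediate depths and several $\Gamma$-stable twisted Levi subgroups having the same fixed points, all chosen according to the quasicharacters. Your merging step goes in the opposite direction.

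The missing idea is to let the quasicharacter dictate the Levi subgroups and to accept residuals. Suppose $X$ realizes the current character on $H'_{x,t}$ inside a $\Gamma$-stable ambient $\bM$. Set $\Phi'=\{\alpha\in\Phi(\bM,\bT): X(H_\alpha)=0 \text{ or } \val(X(H_\alpha))>-t\}$, and split $X=X^\sharp+X^\flat$, where $X^\sharp$ kills $H_\alpha$ for $\alpha\in\Phi'$ and $X^\flat\in\mathrm{span}\{d\alpha:\alpha\in\Phi'\}$ has depth $>-t$. Then $\bM'=\Cent_{\bM}(X^\sharp)$ is $\Gamma$-stable with ${\bM'}^{[\Gamma]}=\bH'$, and $X^\sharp$ is $M$-generic (Proposition~\ref{P:indstep}).

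Instead of extending $\phi_i$ from $H^i$, one constructs a character of $M'$ realized by the $\Gamma$-fixed $X^\sharp$ (Theorem~\ref{T:charexists}). One then recurses on the residual on $H'$, which has strictly smaller depth (Theorem~\ref{T:liftone}). Extending $\phi_i$ directly is not viable: an arbitrary extension through $G^i/G^i_{\der}$ gives no control of depth or of the realizing coset. Moreover $H^i/(H^i\cap G^i_{\der})$ need not have finite index there, e.g.\ for a torus $\bS\subset\Sp_4$ inside $\Cent_{\GL_4}(\bS)$.

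A residual can be folded into the next character $\xi_{i-1}$ without changing $\vartheta$ only once its depth is at most $t_{i-1}$; this is the refactorization condition. That is why each level is lifted down to $s=t_{i-1}$, and why $\Delta$ must be processed top-down, with level $i-1$ lifted inside the bottom group produced at level $i$. Your Step~3 is essentially Theorem~\ref{th:restriction} (which collapses consecutive levels with equal fixed points) combined with Lemma~\ref{lem:equivchardata}. It is fine once the datum is built this way.
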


In the course of proving this theorem, we slightly generalize the definition of a $\Gamma$-stable character-datum from \cite{LathamNevins2023} to suit our needs.  In \cite{LathamNevins2023} it is shown that such a character-datum $\Sigma$ for $\bG$ defines a character-datum $\Sigma^\Gamma$ for $\bG^{[\Gamma]}$ such that the corresponding semisimple characters are compatible via restriction.  To prove Theorem~\ref{th:lift} we thus construct a character-datum $\Sigma$ such that $\Sigma^\Gamma$ is a \emph{refactorization} of $\Delta$, in the sense of \cite{HakimMurnaghan2008}.

Let us sketch the proof, 
which is entirely constructive. 

The character-datum for $G^{[\Gamma]}$ contains a sequence of generic quasicharacters of twisted Levi subgroups of $\bG^{[\Gamma]}$.  In Theorem~\ref{T:liftone}, we establish how to lift a \emph{single} $G^{[\Gamma]}$-generic quasicharacter $\xi$ of a twisted Levi subgroup $H'=\bH'(F) \subset G^{[\Gamma]}$ to a sequence of generic quasicharacters of twisted Levi subgroups of $\bG$.
The proof of this theorem relies heavily on techniques from \cite{Fintzen2021} for constructing characters and twisted Levi subgroups in a controlled fashion (Theorem \ref{T:charexists} and Proposition \ref{P:indstep}). 

To lift a character-datum $\Delta$ for $G^{[\Gamma]}$, one must lift all its quasicharacters in a compatible way, while ensuring that the restriction $\Sigma^\Gamma$ of the obtained character-datum $\Sigma$ for $G$ is a refactorization of $\Delta$. As such, we cannot naively lift all quasicharacters in our sequence independently, but rather require an iterative construction which uses a top-down approach.  This argument occupies most of Section~\ref{sec:completelift}. 

We illustrate the obstacles to naive lifting throughout with examples to help motivate the necessity of this technically-challenging construction. The steps for the proof of Theorem \ref{th:lift} are illustrated in Figure~\ref{fig:roadmap}.

\begin{figure}[!htbp]
\begin{center}
\begin{tikzcd}[row sep=0, column sep=-6.5em]
{} &{} &\text{Theorem \ref{th:lift}} &{}  \\
{} &{} &{} &{} \\
{} &{} &{} &{} \\
{} &\parbox{5.5cm}{\centering Lifting a single quasicharacter (Theorem \ref{T:liftone})}\arrow{uuur} &{} &\parbox{4.5cm}{\centering Top-down\\ iteration (Section~\ref{sec:completelift})}\arrow{uuul} \\
{} &{} &{} &{}  \\
{} &{} &{} &{} \\
\parbox{6.5cm}{\centering Constructing characters with specified restriction (Theorem \ref{T:charexists})}\arrow{uuur} &{} &\parbox{5.5cm}{\centering Constructing $\Gamma$-stable twisted Levis (Proposition \ref{P:indstep})}\arrow{uuul} &{} 
\end{tikzcd}
\end{center}
\caption{\label{fig:roadmap}Roadmap for proving Theorem \ref{th:lift}.}
\end{figure}
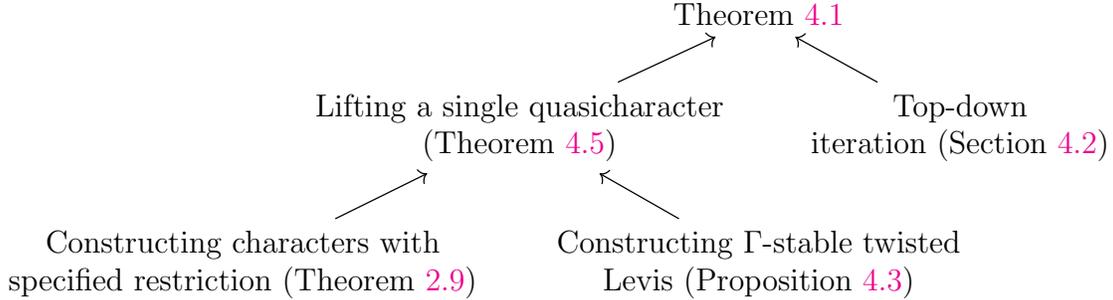

A character-datum for $G^{[\Gamma]}$ will admit many lifts to $\Gamma$-stable character-data for $G$
and our construction produces all of them.  In rough terms: at each step in the inductive construction, one chooses an element $X^\sharp\in \LieG^{\Gamma,*}$ and then a  quasicharacter $\phi'$ of $\Cent_{G}(X^\sharp)$, with particular properties, whose existence  are proven in Proposition~\ref{P:indstep} and Theorem~\ref{T:charexists}, respectively.   
In Propositions~\ref{prop:resthenliftsingle} and \ref{prop:resthenlift}, we prove that if $\Sigma$ is a $\Gamma$-stable character-datum for $G$, and $\Delta = \Sigma^\Gamma$, then there exists a set of choices of $(X_i^\sharp,\phi_i')_i$ that yields $\Sigma$ as a lift of $\Delta$.

Theorem~\ref{T:liftone} slightly generalizes the notion of Howe factorization of quasicharacters of tori of \cite[Definition 3.6.2]{Kaletha2019} to quasicharacters of twisted Levi subgroups, and makes the construction of $G$-factorizations \cite[Definition 5.1]{Murnaghan2011} explicit. We explore these connections in more detail in Section~\ref{sec:otheraves}.

The question studied in this paper is part of a larger story.  The tight correspondence of semisimple characters, and of associated character-data, has also been established for pairs $(\bG,\bH)$ where $\bH$ is a subgroup of $\bG$ containing $\bG_{\mathrm{der}}$ in 
\cite{Bourgeois2021}. That is, one can construct a character-datum for $H$ from a character-datum for $G$, and vice versa, and relate the corresponding semisimple characters via restriction. Importantly, this has a direct application to functoriality in the Langlands correspondence \cite{BourgeoisMezo}.  

Similarly, twisted endoscopy predicts a relationship between the semisimple characters for $G$ and groups such as $H = G^{[\Gamma]}$.  However, the motivation from base change suggests that instead of simple restriction, the semisimple character $\vartheta_G$ should be determined by averaging $\vartheta_H$ over each $\Gamma$-orbit, in the sense that $\vartheta_H\circ \prod_{\gamma\in \Gamma}\gamma=\vartheta_G$.  This is the subject of ongoing work.

The paper is organized as follows. Section \ref{sec:prelims} contains preliminaries, beginning with notation and conventions (Section \ref{sec:notation}), followed by a summary of important results concerning genericity (Section \ref{sec:genericity}), and ending with the construction of semisimple characters for Kim--Yu types from character-data for $G$ (Section \ref{sec:semisimplechars}). Section \ref{sec:restrict} revisits the restriction results from \cite{LathamNevins2023}, and introduces a generalized version of the main results therein. We prove our main theorem (Theorem \ref{th:lift}) in Section \ref{sec:lift}, illustrating the main steps with  examples. Section \ref{sec:liftchar} addresses the lifting of a single quasicharacter, and then Section \ref{sec:completelift} combines all the intermediate results of this paper to describe an iterative construction for lifting a character-datum for $G^{[\Gamma]}$ to a character-datum for $G$. We discuss connections with related constructions in Section~\ref{sec:otheraves}.

\subsection*{Acknowledgments} The authors thank Jeff Adler and Loren Spice for many motivating discussions. 
Some of this work was completed while the second author participated in the IHP (Paris) thematic trimester programme \emph{Representation Theory and Noncommutative Geometry} from January to April, 2025.
  
\section{Preliminaries}\label{sec:prelims}

\subsection{Notation and conventions}\label{sec:notation}

Given the nonarchimedean local field $F$, we denote by $\mathrm{val}(\cdot)$, its normalized valuation
; by $\roi$, its ring of integers; by $\maxid$, the unique maximal ideal of $\roi$; and by $\mathfrak{f}$, its residue field of prime characteristic $p$. We fix a choice of additive character $\psi:F\rightarrow \mathbb{C}^\times$ that is trivial on $\maxid$ but non-trivial on $\roi$.

Unless otherwise specified, $\bG$ denotes a connected reductive group that is defined over $F$. By a twisted Levi subgroup of $\bG$ we mean a subgroup of $\bG$ defined over $F$ that is a Levi subgroup over some finite extension of $F$. We set $G = \bG(F)$ and $\bG_{\der} = [\bG,\bG]$, the derived group. We also set $\mathrm{Lie}(\bG)$ to be the Lie algebra of $\bG$, $\LieG = \mathrm{Lie}(\bG)(F)$ and $\LieG^*$ to be the dual of $\LieG$. We use analogous notational conventions (for example, $M$, $\mathfrak{m}$, $\mathfrak{m}^*$) for an algebraic $F$-subgroup $\bM$ of $\bG$ without comment. 

We will assume $\bG$ splits over a tamely ramified extension of $F$, and that $p$ is coprime to the order of the (absolute) Weyl group of $\bG$. This hypothesis ensures that every twisted Levi subgroup of $\bG$ that is defined over $F$ splits over a tamely ramified extension \cite[\S2]{Fintzen2021}.

We write $\buil(G)$ (respectively, $\builred(G)$) for the enlarged (respectively, reduced) Bruhat--Tits building of $G$. For each $x\in\buil(G)$, we let $[x]$ denote the projection of $x$ in $\builred(G)$. For each $x\in \buil(G)$ and $r>0$, $G_{x,r}$ denotes the corresponding Moy--Prasad filtration subgroup of the parahoric subgroup $G_{x,0}$. We also set $G_{x,r^+} = \underset{t > r}{\cup}G_{x,t}$ for all $r\geq 0$, and recall that for each $x$ there exist values $r'>r$ such that $G_{x,r^+} = G_{x,r'}$. For any $0 < t < r$ we abbreviate $G_{x,t:r} = G_{x,t}/G_{x,r}$. 

Similarly, for any $r\in\mathbb{R}$ and $x\in\buil(G)$, we have a filtration of $\LieG$ by $\roi$-lattices $\LieG_{x,r}$ and $\LieG_{x,r^+}$. This induces a corresponding filtration on $\LieG^*$ by setting $${\LieG^*_{x,-r} = \{\X{} \in \LieG^* : \X{}(\LieG_{x,r^+})\subseteq \maxid\}}$$  and  $\LieG^*_{x,(-r)^+} = \underset{r' < r}{\cup}\LieG^*_{x,-r'}$. For any $0 < t < r \leq 2t$ and $x\in\buil(G)$, set $\LieG_{x,t:r} = \LieG_{x,t}/\LieG_{x,r}$. Then under our tameness hypothesis we have a Moy--Prasad isomorphism of abelian groups $e_{x,t:r}: \LieG_{x,t:r} \to G_{x,t:r}$; see \cite[Theorem 13.5.1]{KalethaPrasad}. 

Given a maximal $F$-torus $\bT$ of $\bG$, we write $\Phi(\bG,\bT)$ for the corresponding root system. For each $\alpha\in\Phi(\bG,\bT)$, we let $\bU_\alpha$ denote the corresponding root subgroup and let $H_\alpha = d\alpha^\vee(1)$ denote the corresponding coroot. Given $X\in \LieT^* = \mathrm{Lie}(\bT)(F)^*$, we recall that $\Cent_\bG(X) = \langle \bT, \bU_\alpha : X(H_\alpha) = 0 \rangle$ is a Levi subgroup of $\bG$ \cite[Propositions 7.1 and 7.2]{Yu2001}.

We fix a finite subgroup $\Gamma \subset \mathrm{Aut}_F(\bG)$, whose order is coprime to $p$. Given a $\Gamma$-stable subgroup $\bH$ of $\bG$, we write $\bH^{[\Gamma]} = (\bH^\Gamma)^\circ$ for the connected component of the fixed-point subgroup and set $H^{[\Gamma]} = \bH^{[\Gamma]}(F)$. By \cite[Theorem 2.1]{PrasadYu2002}, $\bG^{[\Gamma]}$ is a closed reductive subgroup of $\bG$. Furthermore, $\buil(G^{[\Gamma]}) = \buil(G)^\Gamma$ \cite[Proposition 3.4]{PrasadYu2002} so that $\buil(G^{[\Gamma]})\hookrightarrow \buil(G)$. Note, however, that when the maximal split torus in the center of $\bG^{[\Gamma]}$ is not central in $\bG$, it will not be true that $\builred(G)^\Gamma = \builred(G^{[\Gamma]})$. We also have a natural identification $\mathrm{Lie(\bG^{[\Gamma]})^*} = (\mathrm{Lie}(\bG)^*)^\Gamma$ \cite[Lemma 3.4]{LathamNevins2023}.


Finally, we will also assume that $\bG^{[\Gamma]}$ splits over a tamely ramified extension of $F$, and that $p$ is coprime to the order of the (absolute) Weyl group of $\bG^{[\Gamma]}$.

\subsection{Realizing characters and genericity}\label{sec:genericity} 

The construction of semisimple characters requires quasicharacters that are \emph{generic} \cite{Yu2001}. In this section, we recall this notion and several equivalent characterizations that are needed in the following sections. We also prove  Theorem~\ref{T:charexists}, concerning the existence of quasicharacters with a specified restriction condition, using techniques in \cite{Fintzen2021}. 
This theorem will be a key ingredient for lifting quasicharacters of fixed-point subgroups in Section \ref{sec:liftchar}.
 
Recall that $\psi$ is a fixed non-trivial additive character of $F$ of depth zero.  For the remainder of this section, let $\bG'$ denote a twisted Levi subgroup of $\bG$ that is defined over $F$, and contains a maximal $F$-torus $\bT$. By assumption, $\bT$ splits over a tamely ramified extension $E$ of $F$. Let $\Phi = \Phi(\bG,\bT)$ and $\Phi' = \Phi(\bG',\bT)$ be the corresponding absolute root systems.

Set $\bZ' = Z(\bG')^\circ$ and $\LieZ' = \mathrm{Lie}(\bZ')(F)$, so that
$$
\LieZ^{'*}_{-r} = \{\X{} \in \LieZ^{'*} : \X{}(\LieZ'_{r^+})\subseteq \mathfrak{p}_F\}
$$
and $\LieZ^{'*}_{(-r)^+} = \underset{r' < r}{\cup}\LieZ^{'*}_{-r'}$ for all $r\in \mathbb{R}$. One can view an element of $\LieZ^{'*}$ as an element of $\LieT^{*}$ or $\LieG^{'*}$ by extending it trivially on $\LieG'_{\der}=\mathrm{Lie}(\bG'_\der)(F)$ \cite[Proposition 3.1]{AdlerRoche2000}. Similarly, for any $r\in \mathbb{R}$ and $x\in \buil(G')$, an element of $\LieZ^{'*}_{-r}$ can be viewed as an element of $\LieT^{*}_{-r}$ or $\LieG^{'*}_{x,-r}$ 
\cite[Proposition 3.2]{AdlerRoche2000}.  We can also extend scalars to $E$, meaning that elements of $\LieZ^{'*}$ can be evaluated on $\mathrm{Lie}(\bG')(E)$. 

Suppose $X\in \LieZ^{'*}_{-r}$ and $x\in \buil(G')$, and let  $s \in (r/2, r]$.  Then $X$ defines a quasicharacter $\eta_{s,\X{}}$ of $G'_{x,s}/G'_{x,r^+}$ via the Moy--Prasad isomorphism by the rule 
$$
\eta_{s,X}(e_{x,s:r^+}(Y))=\psi(X(Y))
$$ 
for all $Y\in \LieG'_{x,s}$.  We similarly define $\eta_{s^+,X}$ for any $s\in [r/2,r)$. The following is a variation on \cite[Lemma 2.51]{HakimMurnaghan2008}. 

\begin{lemma}\label{L:etasXdependsoncoset}
Let $x\in \buil(G')$, $r>0$ and $s\in [r/2,r)$. Two elements $X,X'\in \LieZ^{'*}_{-r}$ satisfy $\eta_{s^+,X}=\eta_{s^+,X'}$ on $G'_{x,s^+}$ if and only if $X\in X'+\LieZ^{'*}_{-s}$, or equivalently, if $X\in X'+\LieG^{'*}_{x,-s}$.  In particular, $\eta_{s^+,X}|_{G'_{x,r}} = \eta_{r,X}$ is trivial if and only if $X\in \LieZ^{'*}_{(-r)^+}$.
\end{lemma}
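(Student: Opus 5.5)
Since $\eta_{s^+,X}$ and $\eta_{s^+,X'}$ are both defined via the same Moy--Prasad isomorphism $e_{x,s^+:r^+}$, their quotient is $\eta_{s^+,X-X'}$. So $\eta_{s^+,X}=\eta_{s^+,X'}$ holds if and only if $\psi((X-X')(Y))=1$ for all $Y\in\LieG'_{x,s^+}$. Set $Z=X-X'\in\LieZ^{'*}_{-r}$. The statement then reduces to showing that $\psi(Z(\LieG'_{x,s^+}))=1$ is equivalent to $Z\in\LieZ^{'*}_{-s}$, and also equivalent to $Z\in \LieG^{'*}_{x,-s}$.

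First, the condition $\psi(Z(\LieG'_{x,s^+}))=1$ is equivalent to $Z(\LieG'_{x,s^+})\subseteq\maxid$. The set $Z(\LieG'_{x,s^+})$ is an $\roi$-submodule of $F$, hence a fractional ideal. Since $\psi$ is trivial on $\maxid$ but not on $\roi$, every fractional ideal on which $\psi$ is trivial lies in $\maxid$; the converse is clear. By the definition of the dual filtration, $Z(\LieG'_{x,s^+})\subseteq\maxid$ says exactly that $Z\in\LieG^{'*}_{x,-s}$. This proves the equivalence with the second formulation.

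Next I would show $\LieZ^{'*}\cap\LieG^{'*}_{x,-s}=\LieZ^{'*}_{-s}$, where $\LieZ^{'*}$ sits inside $\LieG^{'*}$ via extension by zero on $\LieG'_{\der}$. This is the compatibility of filtrations in \cite[Proposition 3.2]{AdlerRoche2000}: under our tameness hypothesis, $\LieG'_{x,s^+}=\LieZ'_{s^+}\oplus(\LieG'_{\der})_{x,s^+}$. Any $Z$ in the image of $\LieZ^{'*}$ kills the derived part, so $Z(\LieG'_{x,s^+})=Z(\LieZ'_{s^+})$. Hence $Z\in\LieG^{'*}_{x,-s}$ if and only if $Z\in\LieZ^{'*}_{-s}$. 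I expect this to be the only real obstacle: one needs the direct-sum decomposition of the Moy--Prasad lattice, which is exactly where the hypothesis that $p$ does not divide the order of the Weyl group (tameness) enters. I would cite Adler--Roche for it rather than reprove it, possibly passing to the splitting field $E$ and taking Galois fixed points.

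For the final claim, take $X'=0$ and replace $s^+$ by $r$. The same argument, with $\LieG'_{x,r}$ in place of $\LieG'_{x,s^+}$, shows that $\eta_{r,X}$ is trivial if and only if $X(\LieZ'_r)\subseteq\maxid$. This condition means $X\in\LieZ^{'*}_{-r'}$ for some $r'<r$, i.e. $X\in\LieZ^{'*}_{(-r)^+}$, using that the filtration jumps discretely, so $\LieZ'_r=\LieZ'_{r'^+}$ for $r'$ slightly less than $r$. The identity $\eta_{s^+,X}|_{G'_{x,r}}=\eta_{r,X}$ holds because both are defined through compatible Moy--Prasad isomorphisms.
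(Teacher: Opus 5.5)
Your proposal is correct and follows essentially the same route as the paper. You reduce to $\psi((X-X')(\mathfrak{g}'_{x,s^+}))=1$, use that $\psi$ is trivial on $\mathfrak{p}_F$ but not on $\mathfrak{o}_F$ to get $X-X'\in\mathfrak{g}'^{*}_{x,-s}$, and pass to $\mathfrak{z}'^{*}_{-s}$ via the Adler--Roche compatibility; for the final claim you use a filtration jump just below $r$, where the paper does the same thing through a level $t$ with $G'_{x,t^+}=G'_{x,r}$. You also spell out two steps the paper leaves implicit: the fractional-ideal argument behind its appeal to ``linearity,'' and the lattice decomposition behind ``since $X,X'\in\mathfrak{z}'^{*}$.''
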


\begin{proof}
We have $\eta_{s^+,X}=\eta_{s^+,X'}$ if and only if $(X-X')(Y) \in \ker\psi$ for all $Y\in\LieG'_{x,s^+}$. Following the proof of \cite[Proposition 3.1.12]{BourgeoisThesis}, linearity implies this is equivalent to $(X-X')(Y) \in \mathfrak{p}_F$ for all $Y\in\LieG'_{x,s^+}$, or equivalently, $X-X'\in \LieG^{'*}_{x,-s}$.  Since $X,X'\in \LieZ'$ the first result follows. 
For the second statement, the compatibility of the Moy--Prasad isomorphisms \cite[Remark 3.4.3]{Hakim2018} ensures $\eta_{s^+,X}|_{G'_{x,t^+}} = \eta_{t^+,X}$ for every $s\leq t < r$.  Choosing $t$ 
such that $G_{x,t^+} = G_{x,r}$, we have $\LieG^{'*}_{x,-t}=\LieG^{'*}_{x,(-r)^+}$ and $\eta_{r,X}=\eta_{t^+,X}$. Then $\eta_{r,X}$ is trivial if and only if $\eta_{t^+,X} = \eta_{t^+,0}$. The conclusion follows.
\end{proof}

\begin{definition}\label{def:realizedby}
Let $\phi$ be a character of $G'$ of depth $r$, $x\in\buil(G')$, $\X{}\in\LieZ^{'*}_{-r}$ and $s\in (r/2,r]$. We say that $\phi$ is realized by $\X{}$ on $G'_{x,s}$ if $\phi|_{G'_{x,s}}=\eta_{s,X}$.
\end{definition}

\begin{remark}\label{rem:realizedcenter}
Definition \ref{def:realizedby} is not restrictive (under our tameness hypothesis).  Indeed, 
given a  character $\phi$ of $G'$ of depth $r$ and $x\in\buil(G')$, the following hold.
\begin{enumerate}
\item[1)] By \cite[Lemma 3.5.2]{Kaletha2019}, $\phi$ is realized by an element $\X{}\in\LieZ^{'*}_{-r}$ on $G'_{x,r/2^+}$.
\item[2)] Since the Moy--Prasad isomorphisms 
for different $s\in (r/2,r]$ are naturally compatible \cite[Remark 3.4.3]{Hakim2018}, if $\X{}\in\LieZ^{'*}_{-r}$ realizes $\phi$ on $G'_{x,s}$, then $\X{}$ also realizes $\phi$ on $G'_{x,r}$.
\item[3)] By \cite[Lemma 3.5.2]{Kaletha2019} and \cite[Lemma 4.7]{Murnaghan2011}, if $\X{}\in \LieZ^{'*}_{-r}$ realizes $\phi$ on $G'_{x,r}$, then $\X{}$ realizes $\phi$ on $G'_{y,r}$ for all $y\in\buil(G')$. 
\end{enumerate}
\end{remark}

Let $r\in\mathbb{R}$ and $\X{} \in \LieT^{*}_{-r}$.  For any $\alpha\in \Phi$ we have  $H_\alpha\in \LieT(E)_0$ for some tame extension $E$ of $F$.  It follows that  $\val(\X{}(H_\alpha)) \geq -r$ for all $\alpha\in \Phi$. In particular this applies to elements of $\LieZ^{'*}_{-r}$.


\begin{definition}\label{def:genericel}
Let $r\in \mathbb{R}$ and $\X{} \in \LieZ^{'*}_{-r}\setminus \LieZ^{'*}_{(-r)^+}$. If $\val(\X{}(H_\alpha)) = -r$ for all $\alpha\in \Phi\setminus\Phi'$, then $\X{}$ is said to be $G$-generic of depth $-r$.
\end{definition}

\begin{remark}\label{rem:genericgeneral}
It is possible to define genericity without first specifying  $\bG'$.  An element $X\in \LieG^*$ is called almost stable of depth $-r$ if there exists a maximal torus $\bT$ such that $X\in \LieT^*_{-r}\smallsetminus \LieT^*_{(-r)^+}$. Then following \cite[Definition 3.5]{Fintzen2021} we say such an $X$ is $G$-generic of depth $-r$ if for all $\alpha \in \Phi(\bG,\bT)$, either $X(H_\alpha)=0$ or $\val(X(H_\alpha))=-r$.  In this case, setting $\bG'=\Cent_{\bG}(X)$, we infer that $X$ is $G$-generic of depth $-r$ in the sense of Definition~\ref{def:genericel}.  The  genericity of $X$ as an element of $\LieZ^{'*}$ is independent of the choice of $\bT$ (see also \cite[\S5]{AdlerRoche2000}).  Furthermore, by \cite[Corollary 3.8]{Fintzen2021}, such an $X$ is \emph{almost strongly stable} in the sense of \cite[Definition 3.1]{Fintzen2021}.
\end{remark}

We are now ready to state the definition of genericity for a quasicharacter of $G'$.

\begin{definition}\label{def:genericchar}
Let $\phi$ be a non-trivial quasicharacter of $G'$ of depth $r$. We say that $\phi$ is $G$-generic 
if, for every $x\in \buil(G')$, $\phi$ is realized on $G'_{x,r}$ by a $G$-generic element $\X{}\in\LieZ^{'*}_{-r}$.
\end{definition}

We note that when $\bG' = \bG$, genericity of $X\in \LieZ^*$ is simply the condition that it has depth $-r$, and thus every quasicharacter of $G'$ is $G$-generic.

\begin{remark}\label{rem:genexplained}
By \cite[Remark 6.4]{Murnaghan2011}, if a quasicharacter $\phi$ of $G'$ is $G$-generic of depth $r$, then $\phi|_{G'_{x,r}}$ cannot be obtained as the restriction of a quasicharacter of a larger twisted Levi subgroup of $G$. 
\end{remark}

\begin{lemma} \label{lem:realizedindependence}
Let $\phi$ be a quasicharacter of $G'$ of depth $r$. Then its genericity is independent of the choices of $x\in \buil(G')$, and $G$-generic $X\in \LieZ^{'*}_{-r}$ realizing $\phi$ on $G_{x,s}$ for any $s\in (r/2,r]$.
\end{lemma}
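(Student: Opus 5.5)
Two independences need to be shown. First, if $\phi$ is realized on $G'_{x,s}$ by a $G$-generic $X$, then every other element $X'\in\LieZ^{'*}_{-r}$ realizing $\phi$ on $G'_{y,s'}$, for any $y\in\buil(G')$ and $s'\in(r/2,r]$, is also $G$-generic. Second, it then suffices to check genericity at a single point $x$. I plan to reduce both to one claim: $G$-genericity of an element of $\LieZ^{'*}_{-r}$ depends only on its class modulo $\LieZ^{'*}_{(-r)^+}$.

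\textbf{Step 1: reduce to the group $G'_{y,r}$.} Suppose $X$ realizes $\phi$ on $G'_{x,s}$ and $X'$ realizes $\phi$ on $G'_{y,s'}$. By Remark~\ref{rem:realizedcenter}(2), $X$ realizes $\phi$ on $G'_{x,r}$ and $X'$ realizes $\phi$ on $G'_{y,r}$. By Remark~\ref{rem:realizedcenter}(3), $X$ then also realizes $\phi$ on $G'_{y,r}$. Hence $\eta_{r,X}=\eta_{r,X'}$ on $G'_{y,r}$, so $\eta_{r,X-X'}$ is trivial there; this uses that $\eta$ is additive in $X$ via $\psi$. The last assertion of Lemma~\ref{L:etasXdependsoncoset} then gives $X-X'\in\LieZ^{'*}_{(-r)^+}$.

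\textbf{Step 2: the key valuation estimate.} Fix a maximal torus $\bT\subset\bG'$ split over a tame $E$, and let $Z=X-X'\in\LieZ^{'*}_{(-r)^+}$. Then $Z\in\LieZ^{'*}_{-r'}$ for some $r'<r$. By the observation preceding Definition~\ref{def:genericel}, since $H_\alpha\in\LieT(E)_0$, we get $\val(Z(H_\alpha))\ge -r'>-r$ for every $\alpha\in\Phi$. If $\val(X(H_\alpha))=-r$ for all $\alpha\in\Phi\setminus\Phi'$, the ultrametric inequality gives $\val(X'(H_\alpha))=-r$ as well. Also $X'\notin\LieZ^{'*}_{(-r)^+}$, since otherwise $X=X'+Z$ would lie in it. So $X'$ is $G$-generic of depth $-r$, and by symmetry the converse holds.

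\textbf{Step 3: conclude.} For the independence from $x$: if $\phi$ is realized at $x$ by a generic $X$, then by Remark~\ref{rem:realizedcenter}(3) the same $X$ realizes $\phi$ on $G'_{y,r}$ for every $y$. So the condition of Definition~\ref{def:genericchar} holds everywhere once it holds at one point, and by Steps 1--2 any other choice of realizing element is also generic.

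The main point needing care is Step 2: the bound $\val(Z(H_\alpha))>-r$ must hold for the strict filtration piece. This requires the filtration on $\LieZ'$ to be compatible with that on $\LieT$ after extension to $E$, which is where \cite[Proposition 3.2]{AdlerRoche2000} and tameness enter. One also uses that genericity does not depend on the choice of $\bT$ (Remark~\ref{rem:genericgeneral}).
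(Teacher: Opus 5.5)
Your proof is correct and follows essentially the same route as the paper. Like the paper, you reduce to a single point via Remark~\ref{rem:realizedcenter}(2)--(3), then use Lemma~\ref{L:etasXdependsoncoset} to show that any two realizing elements differ by an element of $\LieZ^{'*}_{(-r)^+}$, and conclude because genericity depends only on this coset. The one difference is that you spell out the ultrametric argument for why genericity is a coset property, which the paper simply asserts as following from Definition~\ref{def:genericel} and Lemma~\ref{L:etasXdependsoncoset}.
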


\begin{proof}
From Definition~\ref{def:genericel} and Lemma~\ref{L:etasXdependsoncoset} it follows that the  genericity of $X\in \LieZ^{'*}_{-r}\setminus \LieZ^{'*}_{(-r)^+}$ is a property of the coset $X+\LieZ^{'*}_{(-r)^+}$.  By Remark~\ref{rem:genexplained}(3), $X$ is independent of the choice of  $x\in \buil(G')$ at which we realize $\phi$.  Finally, if $X'\in \LieZ^{'*}_{-r}$ realizes $\phi$ on $G_{x,s}$ for some $s\in (r/2,r]$, then from Remark~\ref{rem:genexplained}(2) we conclude $X'\in X+ \LieZ^{'*}_{(-r)^+}$.
\end{proof}
\color{black}

Our final result applies to every $\eta_{s,X}$ as defined above, independent of the genericity of $X$, showing it can be extended to a character $\phi$ of $G'$. This will be a key ingredient for lifting (not necessarily $G$-generic) quasicharacters of fixed-point subgroups in Section~\ref{sec:liftchar}.

\begin{theorem} \label{T:charexists}
    Let $\bG'$ be a twisted Levi subgroup of $\bG$ and let $x\in \buil(G')$.  Set $\mathfrak{z}'=\Lie(Z(\bG')^\circ)(F)$ and  let $\X{}\in {\mathfrak{z}'}_{-r}^*\setminus {\mathfrak{z}'}_{(-r)^+}^*$ for some $r>0$.  Fix some $s \in (r/2, r]$ and let $\eta_{s,\X{}}$ be the character of $G'_{x,s}/G'_{x,r^+}$ realized by $\X{}$.  Then there exists a quasicharacter $\phi$ of $G'$ of depth $r$ whose restriction to $G'_{x,s}$ coincides with $\eta_{s,\X{}}$.  
\end{theorem}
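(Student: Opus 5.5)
The plan is to construct $\phi$ in stages. First we extend $\eta_{s,X}$ from $G'_{x,s}$ to a character of a larger subgroup on which $X$ is still visible. Then we extend arbitrarily to all of $G'$, using that $G'$ modulo a suitable normal subgroup is abelian. The key observation is that $X\in\LieZ^{'*}$ vanishes on $\LieG'_{\der}$, so $\eta_{s,X}$ is trivial on $G'_{\der}\cap G'_{x,s}$. More precisely, it is trivial on the image $e_{x,s:r^+}(\LieG'_{\der,x,s})$; under the tameness hypotheses this image equals $G'_{\der,x,s}G'_{x,r^+}/G'_{x,r^+}$, by compatibility of Moy--Prasad isomorphisms with the inclusion $\bG'_{\der}\subset\bG'$. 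In fact we will use the stronger statement that $\eta_{s,X}$ is trivial on $G'_{x,s}\cap [G',G']\cdot G'_{x,r^+}$. This is the approach of \cite{Fintzen2021}: there one produces a character of $G'$ extending a given character of $G'_{x,s}$ that factors through the abelianization.

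Concretely, the steps are as follows.

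(1) Show that $\eta_{s,X}$ is trivial on $G'_{x,s}\cap G'_{\der}(F)$. For this, decompose $G'_{x,s}$ using a maximal torus $\bT\subset\bG'$ with $x\in\apart(\bT)$, tamely split over $E$. We have
$$G'_{x,s}=T_{s}\prod_{\alpha\in\Phi'}U_{\alpha,x,s}$$
up to Galois descent from $E$. The element $X$, viewed in $\LieT^*$, kills each root subspace. Hence $\eta_{s,X}$ is trivial on the root subgroup parts, and on the torus part it is $\psi\circ X\circ e^{-1}$.

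(2) Deduce that $\eta_{s,X}$ factors through the image of $G'_{x,s}$ in the abelian group $A:=G'/G'_{\der}(F)$, or, more safely, through $G'_{x,s}/(G'_{x,s}\cap [G',G'])$. The subtlety here is that $[G',G']$ may be smaller than $G'_{\der}(F)$. This is harmless, since triviality on $G'_{x,s}\cap G'_{\der}(F)$ is stronger than what we need.

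(3) Since the image of $G'_{x,s}$ in $A$ is an open compact subgroup of a locally compact abelian group, Pontryagin duality (equivalently, divisibility of $\mathbb{C}^\times$ for the discrete quotient) extends the induced character to a quasicharacter $\phi$ of $A$, and hence of $G'$, restricting to $\eta_{s,X}$ on $G'_{x,s}$.

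(4) Check that $\phi$ has depth exactly $r$. On one side, $\phi|_{G'_{x,r^+}}=\eta_{s,X}|_{G'_{x,r^+}}$ is trivial. On the other, $\phi|_{G'_{x,r}}=\eta_{r,X}$ is nontrivial because $X\notin\LieZ^{'*}_{(-r)^+}$, by Lemma~\ref{L:etasXdependsoncoset}. Since depth of a character of $G'$ can be computed at any point of $\buil(G')$, this gives depth $r$.

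The main obstacle is step (1): justifying that the Moy--Prasad isomorphism is compatible with the root-subgroup/torus decomposition, so that $\eta_{s,X}$ really vanishes on $G'_{\der}\cap G'_{x,s}$. This is delicate when $s<r$, because $e_{x,s:r^+}$ is only an isomorphism of abelian quotients. Moreover, $G'_{\der,x,s}$ may differ from $G'_{\der}(F)\cap G'_{x,s}$. I would first try to use the hypothesis that $p$ does not divide the Weyl group order, together with \cite[Lemma 3.5.2]{Kaletha2019} and the results in \cite{Fintzen2021}, to get
$$G'_{x,s}\cap G'_{\der}(F)\subseteq G'_{\der,x,s}G'_{x,r^+}.$$
Failing that, I would replace $G'_{\der}$ by $[G',G']$ and argue via commutators $[G'_{x,0},G'_{x,s}]$ together with the torus part.
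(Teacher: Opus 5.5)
Your proof is correct, and it takes a different route from the paper's.

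\textbf{How the two routes differ.} The paper never extends from $G'_{x,s}$ directly. It proceeds in four moves:
\begin{itemize}
\item it restricts $\eta_{s,X}$ to $T_s$;
\item it extends this to a character of $T_{0^+}$ that is trivial on $(T\cap G'_{\der})_{0^+}$;
\item it transports that character through Fintzen's isomorphism \eqref{E:Fintzenexact}, $(\bG'/\bG'_{\der})(F)_{0^+}\cong T_{0^+}/(T\cap G'_{\der})_{0^+}$, which is where $p\nmid|\pi_1(\bG'_{\der})|$ enters;
\item it extends to all of $(\bG'/\bG'_{\der})(F)$ and pulls back to $G'$.
\end{itemize}
Depth and realization are then read off from the compatibility of that isomorphism with every filtration level.

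You instead extend directly from the compact open image of $G'_{x,s}$ in the abelian group $G'/G'_{\der}$, using divisibility of $\mathbb{C}^\times$. You then check depth only at $x$, via Lemma~\ref{L:etasXdependsoncoset}, and import point-independence of depth as an external fact (e.g.\ Remark~\ref{rem:realizedcenter}(3)). Here $G'_{\der}=\bG'_{\der}(F)$.

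\textbf{What each route buys.} Yours is more elementary and bypasses \eqref{E:Fintzenexact}. The paper's prescribes $\phi$ on the whole pro-$p$ group $(\bG'/\bG'_{\der})(F)_{0^+}$, which does not depend on $x$. So the depth statement holds uniformly over $\buil(G')$ without a further citation.

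\textbf{The step you left open.} Everything in your route rests on step (1), which you did not finish. It closes using only tameness of $\bG'$ and standard Bruhat--Tits theory, so your fallback via $[G',G']$ is unnecessary.

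\emph{Triviality on the derived part.} That $\eta_{s,X}$ is trivial on $(G'_{\der})_{x,s}$ is exactly the paper's first step. It follows because the Moy--Prasad isomorphism respects $\LieG'=\LieZ'\oplus\LieG'_{\der}$.

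\emph{The intersection.} Let $E$ be a tame splitting field and $T_E$ an $E$-split maximal torus of $\bG'$ with $x\in\apart(T_E,E)$; $T_E$ need not be defined over $F$.
\begin{itemize}
\item For $s>0$ we have $G'(E)_{x,s}=T_E(E)_s\prod_{\alpha}U_\alpha(E)_{x,s}$, and likewise $\bG'_{\der}(E)_{x,s}=(T_E\cap\bG'_{\der})(E)_s\prod_\alpha U_\alpha(E)_{x,s}$.
\item The restriction $X^*(T_E)\to X^*(T_E\cap\bG'_{\der})$ is surjective. Hence $T_E(E)_s\cap\bG'_{\der}(E)=(T_E\cap\bG'_{\der})(E)_s$.
\item Combining the two, $G'(E)_{x,s}\cap\bG'_{\der}(E)=\bG'_{\der}(E)_{x,s}$.
\item Intersecting with $F$-points and applying tame descent of Moy--Prasad filtrations, to both $\bG'$ and $\bG'_{\der}$, gives $G'_{x,s}\cap G'_{\der}=(G'_{\der})_{x,s}$ exactly.
\end{itemize}
So $\eta_{s,X}$ factors through $G'_{x,s}G'_{\der}/G'_{\der}$, as your step (2) requires.

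The paper's own final check that $\phi|_{G'_{x,s}}=\eta_{s,X}$ tacitly uses a decomposition of the same kind, namely $G'_{x,s}=T_s(G'_{\der})_{x,s}$. Both routes therefore rest on the same structural input.
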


\begin{proof}
 Let $\X{} \in {\mathfrak{z}'}_{-r}^*\setminus {\mathfrak{z}'}_{(-r)^+}^*$. 
    Let $\bT$ be a maximal $F$-split torus of $\bG'$ whose corresponding affine apartment $\apart(\bG',\bT,F)$ contains $x$.  
    Recall that we have identified $\LieZ^{'*}$ with the subspace of $\LieG^{'*}$ consisting of linear functionals that vanish on $\LieG'_{\der}$. 
    Since the Moy--Prasad isomorphism respects the decomposition $\LieG'=\LieZ'\oplus \LieG'_{\der}$ 
    \cite[Proposition 3.2]{AdlerRoche2000}, $\eta_{s,\X{}}$ is trivial on $(G'_{\der})_{x,s}$.  Let $\overline{\eta_{s,\X{}}}$ be the character of the group 
    $$T_s(T_{0^+}\cap G'_{\der})=T_s(T\cap G'_{\der})_{0^+}$$ 
    that trivially extends the restriction of $\eta_{s,\X{}}$ to $T_s$.  That is, since $\eta_{s,\X{}}$ is trivial on $(T\cap G'_{\der})_s$, we may define $\overline{\eta_{s,\X{}}}$ on each $t\in T_s$ and $u\in (T\cap G'_{\der})_{0^+}$ by  $\overline{\eta_{s,\X{}}}(tu)=\eta_{s,\X{}}(t)$.  

    Since $T_{0^+}$ is an abelian group, $\overline{\eta_{s,\X{}}}$ can be extended (nonuniquely) to a character $\varphi$ of $T_{0^+}$ that is trivial on $(T\cap G'_{\der})_{0^+}$.  Note that $\varphi$ is trivial on $T_{r^+}$ by construction, but since $\eta_{s,X}$ has depth $r$, 
    it is nontrivial on $T_{r}$.
    
    Since $p$ does not divide the order of the fundamental group of $G'_{\der}$, the argument in \cite[Proof of Corollary 7.2]{Fintzen2021} may be applied, yielding the isomorphism
    \begin{equation}\label{E:Fintzenexact}
    (\bG'/\bG'_{\der})(F)_{0^+} \cong (\bT/\bT\cap \bG'_{\der})(F)_{0^+} \cong T_{0^+}/(T\cap G'_{\der})_{0^+}.
    \end{equation}
Lift the character $\varphi$ via these isomorphisms to a character $\varphi'$ of $(\bG'/\bG'_{\der})(F)_{0^+}$.  Again, since the algebraic group $\bG'/\bG'_{\der}$ is abelian, $\varphi'$ can be extended (nonuniquely) to a character $\overline{\varphi'}$ of $(\bG'/\bG'_{\der})(F)$.  As $G'/G'_{\der}$ embeds in $(\bG'/\bG'_{\der})(F)$, restricting $\overline{\varphi'}$ to this image gives  a character $\phi$ of $G'$. 

By \cite[Proof of Corollary 7.2]{Fintzen2021}, we have that $G'_{x,r^+}/(G'_{\der})_{x,r^+} \subseteq (\bG'/\bG'_{\der})(F)_{r^+}$.  
 By \cite[Lemma 3.1.3]{Kaletha2019}, \eqref{E:Fintzenexact} holds with the filtration level $0$ replaced by any positive real number.  We may thus deduce that  $\phi$ has depth $r$ and is realized on $G'_{x,s}$ by $\X{}$, as required. 
\end{proof}

The quasicharacter $\phi$ produced by Theorem \ref{T:charexists} is far from unique; if $\bG'$ has noncompact center then there will even be infinitely many choices.

\subsection{Semisimple characters}\label{sec:semisimplechars}

In this section, we recall how to construct semisimple characters of Kim--Yu types.

\begin{definition}\label{def:chardatum}
A character-datum for $G$ is a sequence $\Sigma = (\vec{\bG},y,\vec{r},\vec{\phi})$ where
\begin{enumerate}
\item[CD1)] $\vec{\bG} = (\Gi{\bG}{0}\subsetneq\dots\subsetneq\Gi{\bG}{d}\subseteq \Gi{\bG}{d+1} = \bG)$ is a sequence of twisted Levi subgroups, each of which is defined over $F$ (and splits over a tamely ramified extension of $F$);
\item[CD2)] $y$ is a point in $\buil(\Gi{G}{0}) \subset \buil(\Gi{G}{1})\subset \cdots \subset \buil(G)$, relative to a fixed choice of embeddings of buildings; 
\item[CD3)] $\vec{r} = (r_0,\dots,r_d)$ is a sequence of real numbers satisfying $0 <r_0 <\cdots < r_{d-1} < r_d$. We also set $s_i = r_i/2$ for all $0\leq i\leq d$;
\item[CD4)] $\vec{\phi} = (\phii{\phi}{0},\dots,\phii{\phi}{d})$ is a sequence of quasicharacters, where for each $0\leq i\leq d$,  $\phii{\phi}{i}$ is a $\Gi{G}{i+1}$-generic quasicharacter of $\Gi{G}{i}$ of depth $r_i$ in the sense of Definition~\ref{def:genericchar}. 
\end{enumerate}
\end{definition}

\begin{remark}\label{rem:genericrealize}
For all $0\leq i\leq d$, set $\Gi{\bZ}{i} = Z(\Gi{\bG}{i})^\circ$ and $\Gi{\LieZ}{i} = \mathrm{Lie}(\Gi{\bZ}{i})(F)$. By  
Lemma~\ref{lem:realizedindependence}, 
condition (CD4) is equivalent to saying that, for all $x\in \buil(\Gi{G}{i})$, there exists $\X{i} \in \LieZ^{i*}_{-r_i}$, $\Gi{G}{i+1}$-generic of depth $-r_i$, that realizes $\phii{\phi}{i}$ on $\Gi{G}{i}_{x,s_i^+}$.  A \emph{truncated datum for $G$} as defined in \cite[Definition 2.1]{LathamNevins2023} is a character-datum together with a corresponding sequence of such choices $\X{i}$. In Section~\ref{sec:restrict} we show that the simpler notion of a character-datum suffices.
\end{remark}

Given a character-datum $\Sigma = (\vec{\bG},y,\vec{r},\vec{\phi})$, we construct a compact open  subgroup $\groupp{\Sigma}\subset G$ and character $\chara{\Sigma}$ thereof as follows. We set 
$$
\groupp{\Sigma} = \Gi{G}{0}_{y,0^+}\Gi{G}{1}_{y,s_0^+}\cdots \Gi{G}{d}_{y,s_{d-1}^+}.
$$
For each $0\leq i\leq d$, choose $\X{i}\in \LieZ^{i*}_{-r_i}$ realizing $\phi_i$ on $\Gi{G}{i}_{y,s_i^+}$ and use this to define a character $\eta_{s_i^+,\X{i}}$ of $G_{y,s_i^+}$, as in Section~\ref{sec:genericity}.  Then we define a quasicharacter  $\widehat{\phii{\phi}{i}}$ of $\Gi{G}{i}_yG_{y,s_i^+}$ by 
$$
\widehat{\phii{\phi}{i}}(gh) = \phii{\phi}{i}(g)\eta_{s_i^+,\X{i}}(h) \text{ for all } g\in \Gi{G}{i}_y, h\in G_{y,s_i^+}.
$$ 
Finally, we set 
$$
\chara{\Sigma} = \prod_{i=0}^d \left(\widehat{\phii{\phi}{i}}|_{\groupp{\Sigma}}\right).
$$

A character-datum $\Sigma$ can be completed to a datum $\tilde{\Sigma}$ in the sense of \cite[Section 7.2]{KimYu2017} by letting $\Gi{\bM}{0}$ denote the Levi subgroup of $\Gi{\bG}{0}$ associated to the point $y$ by \cite[Section 6.3]{MoyPrasad1996} and adding a depth-zero datum in the sense of \cite[Section 7.1]{KimYu2017}.

From $\tilde{\Sigma}$, Kim and Yu construct a group $\group{\tilde{\Sigma}}$ and a representation $\type{\tilde{\Sigma}}$ thereof, and show that it is a type \cite{KimYu2017}. Then $\groupp{\Sigma}$ is a pro-$p$ subgroup which is normal in $\group{\tilde{\Sigma}}$, and $\type{\tilde{\Sigma}}|_{\groupp{\Sigma}}$ is $\chara{\Sigma}$-isotypic. Thus, we refer to $(\groupp{\Sigma},\chara{\Sigma})$ as the \emph{semisimple character} associated to the type $(\group{\tilde{\Sigma}},\type{\tilde{\Sigma}})$. For simplicity, we omit mention of the type and say that $(\groupp{\Sigma},\chara{\Sigma})$ is a semisimple character for $G$.

Two character-data may give rise to the same semisimple character.  For example, we may choose another base point $y$ with the same image in $\buil^{\red}(G)$. A more subtle equivalence arises from the notion of \emph{refactorization}, as introduced by Hakim and Murnaghan in \cite{HakimMurnaghan2008}. 
Let us recall this important notion, slightly adjusted for application to character-data.

\begin{definition}[{\cite[Definition 4.19(F1)]{HakimMurnaghan2008}}]\label{def:refactorization}
Let $\Sigma = (\vec{\bG},y,\vec{r},\vec{\phi})$ and $\Sigma' = (\vec{\bG},y,\vec{r},\vec{\phi'})$ be two character-data for $G$. We say that $\vec{\phi'}$ is a refactorization of $\vec{\phi}$ (or that $\Sigma'$ is a refactorization of $\Sigma$) if, for all $0\leq i\leq d$, we have
$$\prod_{j=i}^d \phii{\phi}{j}{\phii{\phi}{j}'}^{-1}\Big|_{\Gi{G}{i}_{y,r_{i-1}^+}} =1.$$
\end{definition}

\begin{lemma}\label{lem:equivchardata}
Let $\Sigma=(\vec{\bG},y,\vec{r},\vec{\phi})$ and $\Sigma'=(\vec{\bG},y',\vec{r},\vec{\phi'})$ be two character-data for $G$.
If $[y] = [y']$ and $\vec{\phi'}$ is a refactorization of $\vec{\phi}$, then $\groupp{\Sigma} = \groupp{\Sigma'}$ and $\chara{\Sigma} = \chara{\Sigma'}$.
\end{lemma}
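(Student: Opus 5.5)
We treat the two modifications separately: first change the base point, then change the factorization.

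\emph{Step 1: change of base point.} Since $[y]=[y']$, the points $y$ and $y'$ differ by a translation along $X_*(Z(\bG))\otimes\mathbb{R}$. Such a translation acts compatibly on all the embedded buildings $\buil(\Gi{G}{i})$, and Moy--Prasad filtrations depend only on the image in the reduced building. So $\Gi{G}{i}_{y,t}=\Gi{G}{i}_{y',t}$ for every $i$ and $t$, and likewise $\Gi{G}{i}_y=\Gi{G}{i}_{y'}$. This gives $\groupp{\Sigma}=\groupp{(\vec{\bG},y',\vec{r},\vec{\phi})}$.

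For the characters, take $\X{i}$ realizing $\phi_i$ at $y$. By Remark~\ref{rem:realizedcenter}(3), the same $\X{i}$ realizes $\phi_i$ at $y'$. The definition of $\eta_{s_i^+,\X{i}}$ through the Moy--Prasad isomorphism depends only on the lattices, which coincide. Hence each $\widehat{\phi_i}$ is unchanged. We may therefore assume $y=y'$.

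\emph{Step 2: refactorization.} The group $\groupp{\Sigma}$ depends only on $(\vec{\bG},y,\vec{r})$, so $\groupp{\Sigma}=\groupp{\Sigma'}$. Set $\chi_j=\phi_j{\phi'_j}^{-1}$, a character of $\Gi{G}{j}$. Then
\[
\chara{\Sigma}\chara{\Sigma'}^{-1}=\prod_{j=0}^d \widehat{\phi_j}\,\widehat{\phi'_j}^{-1}\Big|_{\groupp{\Sigma}} .
\]
Since $\groupp{\Sigma}$ is generated by the factors $\Gi{G}{i}_{y,s_{i-1}^+}$ for $0\le i\le d$ (with $s_{-1}^+:=0^+$), it suffices to check that this product is trivial on each such factor.

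Fix $i$. We split the product into the indices $j\ge i$ and $j<i$.

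\emph{Indices $j\ge i$.} Here $\Gi{G}{i}\subseteq\Gi{G}{j}$, so $\widehat{\phi_j}$ restricts to $\phi_j$ on $\Gi{G}{i}_{y,s_{i-1}^+}$. Indeed, on the overlap $\Gi{G}{j}_y\cap G_{y,s_j^+}$ the two definitions agree, because $\X{j}$ realizes $\phi_j$. So this part of the product restricts to $\prod_{j\ge i}\chi_j$.

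\emph{Indices $j<i$.} For $j<i$ we have $s_{i-1}\ge s_j$, so $\Gi{G}{i}_{y,s_{i-1}^+}\subseteq G_{y,s_j^+}$. On this group $\widehat{\phi_j}\widehat{\phi'_j}^{-1}=\eta_{s_j^+,\X{j}-\X{j}'}$, where $\X{j}'$ realizes $\phi'_j$.

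\emph{Using the refactorization hypothesis.} The hypothesis at level $j+1$, together with the fact that each $\chi_k$ has depth at most $r_k$, is meant to control the leading term of $\chi_j$ on $\Gi{G}{j}_{y,r_j}$. One would then like to conclude via Lemma~\ref{L:etasXdependsoncoset} that $\X{j}-\X{j}'$ is congruent to minus the element realizing $\prod_{k>j}\chi_k$, so that the $j<i$ contributions telescope against the $j\ge i$ ones.

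\emph{The main obstacle.} The telescoping is delicate because the $\chi_k$ with $k>j$ need not have depth at most $r_j$. I therefore plan to follow Hakim--Murnaghan's argument \cite[Lemma 4.21/Prop.~4.24]{HakimMurnaghan2008} closely.

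The cleaner route is induction on $d$, peeling off the top index. Define $\phi''_d=\phi_d$ and $\phi''_{d-1}=\phi'_{d-1}\chi_d^{-1}|_{\Gi{G}{d-1}}$, leaving the other factors unchanged. At each stage one checks on the generating subgroups that $\chara{}$ is unchanged. This reduces the claim to a single comparison: $\widehat{\chi_d|_{\Gi{G}{d-1}}}$ must agree with $\chi_d$ on $\Gi{G}{d-1}_yG_{y,s_{d-1}^+}\cap\groupp{\Sigma}$.

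That comparison holds because the hypothesis at $i=d$ makes $\chi_d$ trivial on $\Gi{G}{d}_{y,r_{d-1}^+}$. Consequently the realizing element of $\chi_d$ lies in $\LieZ^{d*}_{-r_{d-1}}$, hence can be used as a realizing element for $\chi_d|_{\Gi{G}{d-1}}$ at depth $r_{d-1}$, by Lemma~\ref{L:etasXdependsoncoset} and the compatibility of Moy--Prasad maps.

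Verifying that the modified sequence is still a character-datum is the step where genericity must be checked. If $\phi''_{d-1}$ fails genericity, we only use the construction formally, since $\chara{}$ is defined without genericity.
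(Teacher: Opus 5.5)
Your strategy is sound, and it ends up resting on the same input as the paper. The paper's proof consists only of two steps. It observes that $\groupp{\Sigma}$ depends on $(\vec{\bG},[y],\vec r)$ alone, and it cites Hakim--Murnaghan's Lemma~4.23 for $\chara{\Sigma}=\chara{\Sigma'}$. Your Step~1 makes explicit that $\chara{\Sigma}$ does not depend on the choice of $y$ over $[y]$, which the paper leaves implicit. Your chain of elementary moves is in effect an attempt to reprove the Hakim--Murnaghan lemma. As a self-contained argument, however, your sketch only handles the top move, and the missing lower moves are exactly where your telescoping worry lives.

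\textbf{The lower moves.} The reduction to ``a single comparison'' works at level $d$ only because $\groupp{\Sigma}\subseteq \Gi{G}{d}_y$, so the level-$d$ hat of $\chi_d$ is just $\chi_d$ on $\groupp{\Sigma}$. For a move at level $k<d$ this fails. An induction through the truncated datum for $\Gi{G}{d}$ does not close either, since its $K_+$ omits the factor $\Gi{G}{d}_{y,s_{d-1}^+}$.

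\textbf{The fix: telescope with partial products.} Use $\omega_k=\prod_{j\geq k}\phi_j{\phi'_j}^{-1}|_{\Gi{G}{k}}$ rather than the individual $\chi_j$; this also dissolves your depth obstacle.
\begin{itemize}
\item The hypothesis says precisely that $\omega_k$ is trivial on $\Gi{G}{k}_{y,r_{k-1}^+}$.
\item Since $\phi_j{\phi'_j}^{-1}=\omega_j\,\omega_{j+1}^{-1}|_{\Gi{G}{j}}$ (with $\omega_{d+1}=1$), the quotient $\chara{\Sigma}\chara{\Sigma'}^{-1}$ collapses to the level-$0$ hat of $\omega_0$.
\item That hat is trivial on $\groupp{\Sigma}$, because $\omega_0$ is trivial on $\Gi{G}{0}_{y,0^+}$.
\end{itemize}
For this collapse you need one lemma. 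Let $\omega$ be a character of $\Gi{G}{k}$ trivial on $\Gi{G}{k}_{y,r_{k-1}^+}$. Then the hat of $\omega|_{\Gi{G}{k-1}}$ built at level $k-1$ agrees on $\groupp{\Sigma}$ with the hat of $\omega$ built at level $k$.

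\textbf{Proving the lemma.} Check it on the generators $\Gi{G}{i}_{y,s_{i-1}^+}$.
\begin{itemize}
\item For $i<k$, both sides equal $\omega$.
\item For $i=k$, you need some $Y\in\LieZ^{k*}_{-r_{k-1}}$ realizing $\omega$ on the larger group $\Gi{G}{k}_{y,s_{k-1}^+}$. This comes from Remark~\ref{rem:realizedcenter}(1) applied to $\omega$, whose depth is at most $r_{k-1}$. It does not come from Lemma~\ref{L:etasXdependsoncoset}, as your sketch suggests. An element realizing $\omega$ on $\Gi{G}{k}_{y,s_k^+}$, such as your $X_d-X_d'$, is determined only modulo $\LieZ^{k*}_{-s_k}$, and need not realize $\omega$ on $\Gi{G}{k}_{y,s_{k-1}^+}$.
\item For $i>k$, which your sketch never meets because it is vacuous when $k=d$, both sides are $\eta_{s_k^+,\cdot}$ of two elements realizing $\omega$ on $\Gi{G}{k}_{y,s_k^+}$. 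Here you use compatibility of the Moy--Prasad isomorphisms. By Lemma~\ref{L:etasXdependsoncoset} these two elements agree modulo $\LieZ^{k*}_{-s_k}$, so they define the same character of $G_{y,s_k^+}$.
\end{itemize}
With this lemma in place, your argument becomes a complete replacement for the citation.
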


\begin{proof}
The group $\groupp{\Sigma}$ depends only on the first three terms of $\Sigma$ (and in fact only on the image of $y$ in $\builred(G)$), so the equality of groups is immediate. The equality of characters follows from \cite[Lemma 4.23]{HakimMurnaghan2008}. 
\end{proof}

\section{Restricting characters}\label{sec:restrict}

In this section, we define a $\Gamma$-stable character-datum $\Sigma$ for $G$ and outline how to obtain from it a character-datum $\Sigma^\Gamma$ for $G^{[\Gamma]}$ such that the corresponding semisimple characters are naturally related via restriction, following \cite{LathamNevins2023}.

We begin by discussing our notion of $\Gamma$-stability.  Recall that $[y]\in \builred(G)$ denotes the projection of $y\in \buil(G)$.

\begin{definition}\label{def:gammastabledatum}
Let $\Sigma = (\vec{\bG},y,\vec{r},\vec{\phi})$ be a character-datum for $G$. Then $\Sigma$ is said to be $\Gamma$-stable if 
\begin{enumerate}
\item $[y]$ is $\Gamma$-fixed, and
\item for each $0\leq i \leq d$ and for each $\X{i}\in \LieZ^{i*}_{-r_i}$ that realizes $\phii{\phi}{i}$ on $\Gi{G}{i}_{y,s_i^+}$, the coset $\X{i} + \LieZ^{i*}_{-s_i}$ is $\Gamma$-stable.
\end{enumerate}
\end{definition}

In contrast, in 
\cite[Definition 3.11]{LathamNevins2023}, both $y$ and specific choices of $\X{i}$ were deemed to be fixed by $\Gamma$. Note that the $\X{i}$ considered therein were almost strongly stable in the sense of Remark~\ref{rem:genericgeneral}. The next two lemmas establish the equivalence of our two notions of $\Gamma$-stability.

\begin{lemma}\label{lem:gammafixedy}
    Let $x\in \buil(G)$. 
    The following are equivalent.
    \begin{itemize}
    \item[1)] The point $[x]\in \builred(G)$ is $\Gamma$-fixed.
    \item[2)]  There exists $\tilde{x}\in \buil(G)^\Gamma$ such that $[\tilde{x}]=[x]$.
    \end{itemize}    
\end{lemma}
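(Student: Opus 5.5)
The direction 2) $\Rightarrow$ 1) is immediate. The projection $\buil(G)\to\builred(G)$ is $\mathrm{Aut}_F(\bG)$-equivariant, so if $\tilde{x}$ is $\Gamma$-fixed and $[\tilde{x}]=[x]$, then $\gamma[x]=[\gamma\tilde{x}]=[\tilde{x}]=[x]$ for all $\gamma\in\Gamma$.

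For 1) $\Rightarrow$ 2), I will use the product structure of the enlarged building. Write $\buil(G)=\builred(G)\times V$, where $V=X_*(\bZ_s)\otimes\mathbb{R}$ and $\bZ_s$ is the maximal $F$-split torus in the center of $\bG$. The fibre of the projection over $[x]$ is then the affine space $x+V$, on which $V$ acts simply transitively by translations. Since $\Gamma$ acts by $F$-automorphisms, it preserves $Z(\bG)$ and hence $\bZ_s$, so it acts linearly on $V$. The action of $\Gamma$ on $\buil(G)$ is compatible with this decomposition, in the sense that $\gamma(z+v)=\gamma z+\gamma v$ for $z\in\buil(G)$ and $v\in V$; I will check this from the functoriality of the building (as in \cite{PrasadYu2002}). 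If $[x]$ is $\Gamma$-fixed, then $\Gamma$ preserves the fibre $x+V$ and acts on it by affine transformations. For each $\gamma$, write $\gamma x=x+c_\gamma$ with $c_\gamma\in V$.

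Now average over the finite group $\Gamma$. Set
\[
\tilde{x}=x+\frac{1}{|\Gamma|}\sum_{\gamma\in\Gamma}c_\gamma ,
\]
which is the barycenter of the finite orbit $\Gamma x$ inside the affine space $x+V$. Because $\Gamma$ acts on $x+V$ by affine maps, it permutes the orbit and preserves barycenters. Hence $\tilde{x}$ is $\Gamma$-fixed, and it lies in $x+V$, so $[\tilde{x}]=[x]$. An alternative is the Bruhat--Tits fixed-point argument: $x+V$ is a complete CAT(0) space on which $\Gamma$ acts isometrically with a bounded orbit. The barycenter argument is simpler and uses only that $|\Gamma|$ is finite; it does not need the hypothesis that $|\Gamma|$ is prime to $p$.

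The main obstacle is justifying that the $\Gamma$-action on $\buil(G)$ is affine along the fibres of the projection, that is, compatible with the translation action of $V$. I would get this from the functorial description of $\buil(G)$, whose extended apartments are affine spaces under $X_*(\bT)\otimes\mathbb{R}$ and on which automorphisms act by affine maps with linear part induced on cocharacters. Alternatively, the equality $\buil(G)^\Gamma=\buil(G^{[\Gamma]})$ from \cite{PrasadYu2002} shows that the fixed set is nonempty and convex, but the fibrewise barycenter argument still seems to be what is needed.
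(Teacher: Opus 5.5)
Your proof is correct and follows essentially the same route as the paper: 2)$\Rightarrow$1) comes from the $\Gamma$-compatibility of the decomposition $\buil(G)=\builred(G)\times V$, and 1)$\Rightarrow$2) comes from taking the barycentre $\frac{1}{|\Gamma|}\sum_{\gamma}\gamma\cdot x$ of the $\Gamma$-orbit of $x$. The paper leaves implicit that this orbit lies in the affine fibre $x+V$ and that $\Gamma$ acts affinely there; it covers this only by the phrase ``$\Gamma$ acting on each factor separately''. Your version makes that point explicit.
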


\begin{proof}
Since $\buil(G)=\builred(G)
\times X_*(Z(\bG),F)\otimes_{\mathbb{Z}} \mathbb{R}$, with $\Gamma$ acting on each factor separately, (2) implies (1).  Conversely, assume that $[x]$ is $\Gamma$-fixed.  Then  the barycentre  $\tilde{x} =\frac{1}{|\Gamma|}\sum_{\gamma\in\Gamma}\gamma\cdot x$ of the orbit of $x$ under $\Gamma$ will again have the same projection $[x]\in \builred(G)$ and by construction it is $\Gamma$-fixed.
\end{proof}


\begin{lemma}\label{lem:defstabilityequiv}
Let $\bG'$ be a twisted Levi subgroup of $\bG$ and $y\in \buil(G')$.  Let $\phi$ be a $G$-generic quasicharacter of $G'$ of depth $r$. Denote by $\LieZ^{'*}$ the dual of the Lie algebra of the center of $\bG'$ and set $s=r/2$. Then the following are equivalent.
\begin{itemize}
\item[1)] For any $X\in \LieZ^{'*}_{-r}$ realizing $\phi$ on $G_{y,s^+}$, the coset $X+\LieZ^{'*}_{-s}$ is $\Gamma$-stable.
\item[2)] There exists a $\Gamma$-fixed almost strongly stable element $X\in \LieG^{'*}$ realizing $\phi$ on $G_{y,s^+}$ and satisfying $\Cent_{\bG}(X)=\bG'$.
\end{itemize}
\end{lemma}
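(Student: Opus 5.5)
The plan is to prove the two implications directly. The tools are Lemma~\ref{L:etasXdependsoncoset}, which says that the character $\eta_{s^+,X}$ on $G'_{y,s^+}$ depends only on the coset $X+\LieZ^{'*}_{-s}$, and the fact that $|\Gamma|$ is invertible in $\roi$. We also use that $\LieZ^{'*}_{-s}$ is an $\roi$-lattice, defined intrinsically from the torus $\bZ'$ and hence preserved by any $F$-automorphism of $\bG$ that stabilizes $\bG'$.

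\emph{(2) $\Rightarrow$ (1).} Let $X$ be as in (2).
\begin{itemize}
\item[(a)] For $\gamma\in\Gamma$ we have $\gamma(\bG')=\gamma(\Cent_\bG(X))=\Cent_\bG(\gamma X)=\Cent_\bG(X)=\bG'$. So $\bG'$, and hence $\bZ'$, is $\Gamma$-stable.
\item[(b)] It follows that $\LieZ^{'*}$ and its filtration lattices $\LieZ^{'*}_{-s}$ are $\Gamma$-stable.
\item[(c)] Since $\bG'$ centralizes $X$, the functional $X$ is $\mathrm{Ad}^*(G')$-invariant, so it vanishes on $[\LieG',\LieG']=\LieG'_{\der}$. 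Hence $X\in\LieZ^{'*}$, and $X$ lies in $\LieZ^{'*}_{-r}$ by depth considerations.
\item[(d)] Any $X'\in\LieZ^{'*}_{-r}$ realizing $\phi$ on $G'_{y,s^+}$ satisfies $\eta_{s^+,X'}=\eta_{s^+,X}$. By Lemma~\ref{L:etasXdependsoncoset} this gives $X'+\LieZ^{'*}_{-s}=X+\LieZ^{'*}_{-s}$.
\item[(e)] This common coset is $\Gamma$-stable, because $X$ is $\Gamma$-fixed and the lattice $\LieZ^{'*}_{-s}$ is $\Gamma$-stable.
\end{itemize}

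\emph{(1) $\Rightarrow$ (2).} Pick any $X\in\LieZ^{'*}_{-r}$ realizing $\phi$ on $G'_{y,s^+}$; by Definition~\ref{def:genericchar} and Lemma~\ref{lem:realizedindependence} we may take $X$ to be $G$-generic of depth $-r$.
\begin{itemize}
\item[(a)] For $\gamma\in\Gamma$, hypothesis (1) gives $\gamma X\in X+\LieZ^{'*}_{-s}\subset\LieZ^{'*}$.
\item[(b)] Since $s<r$, we have $\LieZ^{'*}_{-s}\subseteq\LieZ^{'*}_{(-r)^+}$. Genericity is a property of the coset modulo $\LieZ^{'*}_{(-r)^+}$ (proof of Lemma~\ref{lem:realizedindependence}), so $\gamma X$ is again $G$-generic as an element of $\LieZ^{'*}$.
\item[(c)] Therefore $\Cent_\bG(\gamma X)=\bG'$: roots in $\Phi'$ vanish on $\gamma X$, while roots outside $\Phi'$ have valuation exactly $-r$ and are nonzero. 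Hence $\gamma(\bG')=\bG'$, so $\Gamma$ preserves $\LieZ^{'*}$ and $\LieZ^{'*}_{-s}$.
\item[(d)] Set $\tilde X=\frac{1}{|\Gamma|}\sum_{\gamma\in\Gamma}\gamma X$. Then $\tilde X-X=\frac{1}{|\Gamma|}\sum_\gamma(\gamma X-X)\in\LieZ^{'*}_{-s}$, because $\LieZ^{'*}_{-s}$ is an $\roi$-module and $|\Gamma|\in\roi^\times$. By construction $\tilde X$ is $\Gamma$-fixed.
\item[(e)] By Lemma~\ref{L:etasXdependsoncoset}, $\tilde X$ still realizes $\phi$ on $G'_{y,s^+}$. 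By step (b) it is $G$-generic of depth $-r$, and as in step (c) it satisfies $\Cent_\bG(\tilde X)=\bG'$.
\item[(f)] Finally, Remark~\ref{rem:genericgeneral} (i.e.\ \cite[Corollary 3.8]{Fintzen2021}) shows that $\tilde X$ is almost strongly stable.
\end{itemize}

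The main obstacle is making sure that $\Gamma$ genuinely acts on $\LieZ^{'*}$ and preserves its filtration. The statement does not assume that $\bG'$ is $\Gamma$-stable, so this has to be extracted from the hypotheses. In (2) $\Rightarrow$ (1) it comes from $X$ being fixed; in (1) $\Rightarrow$ (1)'s converse direction it comes from the centralizer computation in step (c) via genericity. Once $\bG'$ is known to be $\Gamma$-stable, the $\Gamma$-equivariance of $\LieZ^{'*}_{-s}$ follows because the filtration on the dual of the Lie algebra of a torus is canonical and hence respected by $F$-automorphisms.

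A minor point is the mismatch between realizing $\phi$ on $G_{y,s^+}$ in the statement and on $G'_{y,s^+}$ in Lemma~\ref{L:etasXdependsoncoset}. I read the statement as meaning $G'_{y,s^+}$, where $\phi$ is defined.
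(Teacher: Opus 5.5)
Your proof is correct and follows essentially the same route as the paper. For (1)$\Rightarrow$(2) you average over $\Gamma$ inside the $\Gamma$-stable coset, using $|\Gamma|\in\roi^\times$; for (2)$\Rightarrow$(1) you show that a $\Gamma$-fixed $X$ with $\Cent_{\bG}(X)=\bG'$ lies in $\LieZ^{'*}_{-r}$, so its coset is the unique realizing coset and is $\Gamma$-stable. The differences are minor: you get $X\in\LieZ^{'*}$ from infinitesimal invariance rather than the paper's identification $(\LieG^{'*})^{\bG'}\cong\LieZ^{'*}$, and your step (c) in (1)$\Rightarrow$(2) is harmless but unnecessary, since the averaging in (d) uses only $\gamma X - X\in\LieZ^{'*}_{-s}$.
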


\begin{proof}
Suppose $X'\in \LieZ^{'*}_{-r}$ realizes $\phi$ on $G'_{y,s^+}$.  By Lemma~\ref{L:etasXdependsoncoset} this is a property of the coset $X'+\LieZ^{'*}_{-s}$.  If this coset is $\Gamma$-stable, then we may set 
$$
\check{X} = \frac{1}{\vert \Gamma \vert}\sum_{\gamma\in \Gamma}\gamma\cdot 
X' \quad \in \frac{1}{\vert \Gamma \vert}\sum_{\gamma\in \Gamma}\gamma\cdot (X'+\LieZ^{'*}_{-s}) = X'+\LieZ^{'*}_{-s}
$$
where we have used that $\val(\vert \Gamma \vert)=0$ to ensure that scaling  by this element preserves depth.  Then $\check{X}$ is a $\Gamma$-fixed element realizing $\phi$ on $G'_{y,s^+}$.  Since $\phi$ is $G$-generic, so is $\check{X}$; thus  by Remark~\ref{rem:genericgeneral} it is almost strongly stable. Since $\check{X}\in \LieZ^{'*}$, its centralizer contains $\bG'$ and genericity assures us that equality holds.

Conversely, if $X\in \LieG^{'*}$ satisfies $\Cent_{\bG}(X)=\bG'$ then since our tameness hypothesis implies we may $G'$-equivariantly identify $\LieG'$  with $\LieG^{'*}$, 
we have  $X\in (\LieG^{'*})^{\bG'}\cong \LieZ^{'*}$.  Thus if $X$ realizes $\phi$ on  $G'_{y,s^+}$, then it lies in $\LieZ^{'*}_{-r}$ and if it is $\Gamma$-fixed then both $\bG'$ and the coset $X+\LieZ^{'*}_{-s}$ are $\Gamma$-stable.  
\end{proof}

With this preparation, we can now show \cite[Proposition 3.12 and Theorem 4.4]{LathamNevins2023} hold with our notion of $\Gamma$-stable character-data.

\begin{theorem}\label{th:restriction}
Let $\Sigma = (\vec{\bG},y,\vec{r},\vec{\phi})$ be a character-datum for $G$ which is $\Gamma$-stable in the sense of Definition \ref{def:gammastabledatum}. Define $\Sigma^\Gamma = (\vec{\bG}^{[\Gamma]},y,\vec{r},\vec{\phi})$, where
\begin{enumerate}
\item[(i)] $\vec{\bG}^{[\Gamma]}$ is the twisted Levi sequence in  $\bG^{[\Gamma]}$ consisting of the groups $\Gi{\bG}{i,[\Gamma]} = (\Gi{\bG}{i})^{[\Gamma]}$;
\item[(ii)] each quasicharacter $\phii{\phi}{i}$ is viewed as a quasicharacter of $\Gi{\bG}{i,[\Gamma]}$ by restriction. 
\end{enumerate}
For each $i$ such that there exists $k>0$ for which $\Gi{\bG}{i,[\Gamma]} = \Gi{\bG}{i+k,[\Gamma]}$, replace the tuple of subsequences $((\Gi{\bG}{i,[\Gamma]},\dots,\Gi{\bG}{i+k,[\Gamma]}),(r_i,\dots,r_{i+k}),(\phii{\phi}{i},\dots,\phii{\phi}{i+k}))$ with the tuple $\displaystyle (\Gi{\bG}{i,[\Gamma]}, r_{i+k}, \prod_{j=0}^{k}\phii{\phi}{i+j})$. Then the resulting tuple $\Sigma^\Gamma$ is a character-datum for $G^{[\Gamma]}$. Furthermore, $\groupp{\Sigma}^\Gamma = \groupp{\Sigma^\Gamma}$ and $\chara{\Sigma^\Gamma} = \chara{\Sigma}|_{\groupp{\Sigma}^\Gamma}$.

\end{theorem}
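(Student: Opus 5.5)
The strategy is to reduce Theorem~\ref{th:restriction} to \cite[Proposition 3.12 and Theorem 4.4]{LathamNevins2023}. Those results are proved for truncated data in which $y$ and suitable almost strongly stable elements $\X{i}$ are all $\Gamma$-fixed. So the work is to replace our $\Gamma$-stable character-datum $\Sigma$ by an equivalent datum of that stronger form, without changing $\groupp{\Sigma}$ or $\chara{\Sigma}$.

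First, the base point. By Lemma~\ref{lem:gammafixedy}, condition (1) of Definition~\ref{def:gammastabledatum} gives $\tilde y\in\buil(G)^\Gamma$ with $[\tilde y]=[y]$. We will take $\tilde y$ to be the barycentre of the $\Gamma$-orbit of $y$. We must check that $\tilde y$ still lies in $\buil(\Gi{G}{0})$.
\begin{itemize}
\item Each $\Gi{\bG}{i}$ is $\Gamma$-stable, because it equals $\Cent_{\bG^{i+1}}$ of a $\Gamma$-stable coset representative; this comes out of the next step.
\item The embeddings of buildings can be chosen $\Gamma$-equivariantly, or at least the barycentre of an orbit in the convex $\Gamma$-stable subset $\buil(\Gi{G}{0})$ stays in that subset.
\end{itemize}
By Lemma~\ref{lem:equivchardata}, replacing $y$ by $\tilde y$ does not change $\groupp{\Sigma}$ or $\chara{\Sigma}$. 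The character of $\Sigma^\Gamma$ likewise depends only on $[y]$ in $\builred(G^{[\Gamma]})$. Since $\tilde y$ is $\Gamma$-fixed, it lies in $\buil(G^{[\Gamma]})=\buil(G)^\Gamma$, and in each $\buil(\Gi{G}{i,[\Gamma]})$. This makes $\Sigma^\Gamma$ well defined.

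Second, the elements $\X{i}$. For each $i$, Lemma~\ref{lem:defstabilityequiv} applied to $\phi_i$ as a $\Gi{G}{i+1}$-generic quasicharacter of $\Gi{G}{i}$ produces a $\Gamma$-fixed almost strongly stable $\X{i}$ with $\Cent_{\Gi{\bG}{i+1}}(\X{i})=\Gi{\bG}{i}$ that realizes $\phi_i$ on $\Gi{G}{i}_{y,s_i^+}$. This is the averaged element $\check X_i$. It also shows that $\Gi{\bG}{i}$ is $\Gamma$-stable, so $\Gi{\bG}{i,[\Gamma]}$ is defined. By Lemma~\ref{L:etasXdependsoncoset}, the characters $\eta_{s_i^+,\X{i}}$ do not depend on which realizing element we pick, so $\chara{\Sigma}$ computed with these $\Gamma$-fixed choices is the same character. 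We now have exactly a $\Gamma$-stable truncated datum in the sense of \cite[Definition 3.11]{LathamNevins2023}.

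Third, we invoke the earlier results. \cite[Proposition 3.12]{LathamNevins2023} gives the following.
\begin{itemize}
\item The groups $\Gi{\bG}{i,[\Gamma]}$ form a twisted Levi sequence in $\bG^{[\Gamma]}$, the centralizer in $\bG^{[\Gamma]}$ of the restriction of $\X{i}$.
\item The restricted $\X{i}$ are $\Gi{G}{i+1,[\Gamma]}$-generic whenever the groups differ.
\item After merging the indices where consecutive groups coincide, the products of the corresponding $\phi$'s are generic of the larger depth.
\end{itemize}
Here $\Lie(\bG^{[\Gamma]})^*=(\Lie(\bG)^*)^\Gamma$ is used to view the restrictions inside the correct duals. \cite[Theorem 4.4]{LathamNevins2023} then gives $\groupp{\Sigma}^\Gamma=\groupp{\Sigma^\Gamma}$ and $\chara{\Sigma^\Gamma}=\chara{\Sigma}|_{\groupp{\Sigma}^\Gamma}$. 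Finally, we translate back using Lemma~\ref{lem:equivchardata}, together with the fact that the data with $\tilde y$ and with $y$ define the same objects for $G^{[\Gamma]}$.

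I expect the main obstacle to be the merging step, which requires that $\prod_{j=0}^k\phi_{i+j}$ restricted to $\Gi{G}{i,[\Gamma]}$ be $\Gi{G}{i+k+1,[\Gamma]}$-generic of depth exactly $r_{i+k}$. The plan is to realize the product by $\sum_j \X{i+j}$ restricted to the $\Gamma$-fixed part. One then checks that the top term has depth $-r_{i+k}$ and is generic on roots outside $\Gi{\bG}{i+k,[\Gamma]}$, while the lower terms vanish on the relevant coroots $H_\alpha$ because they are central in $\Gi{\bG}{i+k}$. This is precisely the content of \cite[Proposition 3.12]{LathamNevins2023}, which we will cite rather than reprove. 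The only genuinely new point is that our weaker notion of $\Gamma$-stability feeds into it, and that is handled by the first two steps.
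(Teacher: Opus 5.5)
Your proposal is correct and follows essentially the same route as the paper. You replace $y$ by a $\Gamma$-fixed point with the same image in $\builred(G)$ (Lemmas~\ref{lem:gammafixedy} and \ref{lem:equivchardata}), use Lemma~\ref{lem:defstabilityequiv} to obtain $\Gamma$-fixed generic $X_i$ with $\Cent_{\bG^{i+1}}(X_i)=\bG^i$, and then cite \cite[Proposition 3.12 and Theorem 4.4]{LathamNevins2023} for the resulting $\Gamma$-stable truncated datum.

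One small slip in your heuristic for the merging step, which you rightly defer to that citation: the lower terms $X_{i+j}$, $j<k$, are central only in $\bG^{i+j}$, not in $\bG^{i+k}$, so they need not vanish on coroots outside $\bG^{i+k}$. They merely have depth strictly greater than $-r_{i+k}$ there, and the ultrametric inequality then gives the required valuation.
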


In an abuse of language, we will say that $\Sigma^\Gamma$ is the $\Gamma$-fixed point of $\Sigma$.
We illustrate Theorem~\ref{th:restriction} in Figure~\ref{fig:restrictionpicture}.

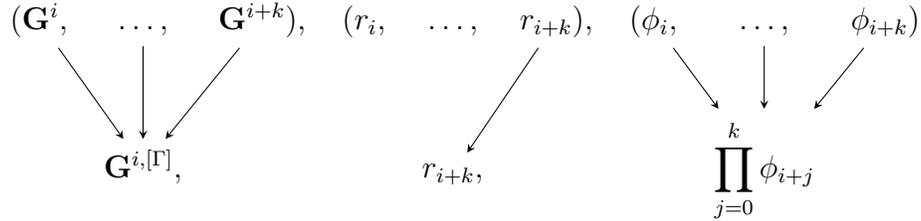
\begin{figure}[!htbp]
\begin{tikzpicture}
  \matrix (m) [matrix of math nodes,row sep=2em,column sep=0.5em,minimum width=0.1em]
  {
    (\Gi{\bG}{i}, &\dots, &\Gi{\bG}{i+k}), & (r_i, &\dots, & r_{i+k}),& (\phii{\phi}{i}, &\dots, & \phii{\phi}{i+k}) \\
    &   \Gi{\bG}{i,[\Gamma]}, & & &r_{i+k}, & & & \displaystyle\prod_{j=0}^{k}\phii{\phi}{i+j}\\};
  \path[-stealth]
  (m-1-1) edge node [left] {} (m-2-2)
  (m-1-2) edge node [right] {} (m-2-2)
  (m-1-3) edge node [right] {} (m-2-2)
  (m-1-6) edge node [left] {} (m-2-5)
  (m-1-7) edge node [left] {} (m-2-8)
  (m-1-8) edge node [right] {} (m-2-8)
  (m-1-9) edge node [right] {} (m-2-8);
\end{tikzpicture}
\caption{\label{fig:restrictionpicture}Illustration of how part of the character-datum collapses when taking the $\Gamma$-fixed point as per Theorem \ref{th:restriction}. Here, $\Gi{\bG}{i,[\Gamma]} = \Gi{\bG}{i+1,[\Gamma]} = \dots = \Gi{\bG}{i+k,[\Gamma]}$.}
\end{figure}

\begin{proof}[Proof of Theorem \ref{th:restriction}]
Given a $\Gamma$-stable character-datum $\Sigma = (\vec{\bG},y,\vec{r},\vec{\phi})$, we may assume without loss of generality that $y$ is $\Gamma$-fixed as a consequence of Lemmas \ref{lem:equivchardata} and \ref{lem:gammafixedy}.
Thus we have $y \in \buil(G)^\Gamma = \buil(G^{[\Gamma]})$. By Lemma \ref{lem:defstabilityequiv}, there exists a $\Gamma$-fixed $\Gi{G}{i+1}$-generic element $\X{i} \in \LieZ^{i*}_{-r_i}$ that realizes $\phii{\phi}{i}$ on $\Gi{G}{i}_{y,s_i^+}$. 
Let $\bT$ be any maximal torus of $\bG^{i}$.  By the independence of genericity from the choice of $\bT$ (Remark~\ref{rem:genericgeneral}), we have $\Cent_{\Gi{\bG}{i+1}}(\X{i}) = \langle \bT, \bU_\alpha : \X{i}(H_\alpha) = 0 \rangle = \bG^i$. It follows that $\Sigma' = (\vec{\bG},y,\vec{r},\vec{X},\vec{\phi})$, where $\vec{X} = (\X{0},\dots,\X{d})$, is a $\Gamma$-stable truncated datum in the sense of \cite[Definitions 2.1 and 3.11]{LathamNevins2023}. The result thus follows from \cite[Proposition 3.12 and Theorem 4.4]{LathamNevins2023}.
\end{proof}

\section{Lifting characters via a lift of data}\label{sec:lift}

The main result of this section is the explicit construction of a lift for a character-datum $\Delta$ of $G^{[\Gamma]}$, as per the following theorem.

\begin{theorem}\label{th:lift}
Let $\Delta$ be a character-datum for $G^{[\Gamma]}$. Then there exists a character-datum $\Sigma$ for $G$ such that 
\begin{enumerate}
\item[1)] $\Sigma$ is $\Gamma$-stable in the sense of Definition \ref{def:gammastabledatum}, and \item[2)]$\Sigma^\Gamma$, as defined in Theorem \ref{th:restriction}, is a refactorization of $\Delta$ in the sense of Definition \ref{def:refactorization}. 
\end{enumerate} 
In particular, $\groupp{\Delta} = \groupp{\Sigma^\Gamma}$ and $\chara\Delta = \chara\Sigma|_{\groupp{\Delta}}$.
\end{theorem}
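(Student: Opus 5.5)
The plan is to build $\Sigma$ from $\Delta=(\vec{\bH},y,\vec{r},\vec{\xi})$ by a top-down iteration, where $\bH^0\subsetneq\cdots\subseteq\bH^{d+1}=\bG^{[\Gamma]}$. Lemma~\ref{lem:equivchardata} lets us assume $y\in\buil(G^{[\Gamma]})=\buil(G)^\Gamma$. The final assertion then follows formally. Once $\Sigma$ is $\Gamma$-stable and $\Sigma^\Gamma$ refactorizes $\Delta$ (with the same base point $y$), Lemma~\ref{lem:equivchardata} gives $\groupp{\Delta}=\groupp{\Sigma^\Gamma}$ and $\chara{\Delta}=\chara{\Sigma^\Gamma}$. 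Theorem~\ref{th:restriction} gives $\groupp{\Sigma^\Gamma}=\groupp{\Sigma}^\Gamma$ and $\chara{\Sigma^\Gamma}=\chara{\Sigma}|_{\groupp{\Sigma}^\Gamma}$. So the work is in constructing $\Sigma$.

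First, lift a single generic quasicharacter (this is what Theorem~\ref{T:liftone} should do). Let $\xi$ be an $H''$-generic quasicharacter of depth $r$ of a twisted Levi $H'\subset H''\subseteq G^{[\Gamma]}$. Choose a $\Gamma$-fixed realizing element $X\in\LieZ_{H'}^{*}\subset(\LieG^*)^\Gamma$, using the identification $\Lie(\bG^{[\Gamma]})^*=(\Lie(\bG)^*)^\Gamma$. Viewed in $\LieG^*$, $X$ need not be $G$-generic: some roots $\alpha\in\Phi(\bG,\bT)$ have $X(H_\alpha)\neq 0$ but $\val(X(H_\alpha))>-r$. To repair this, decompose $X$ by depth into a sum $X=X_0+X_1+\cdots$ of $\Gamma$-fixed elements. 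Each partial sum should be chosen so that its centralizer in $\bG$ is a $\Gamma$-stable twisted Levi subgroup whose $\Gamma$-fixed connected component is the correct group. Proposition~\ref{P:indstep} is the tool for producing these $\Gamma$-stable twisted Levis $\Cent_\bG(X^\sharp)$. This yields a chain $\bG'_0\subsetneq\bG'_1\subsetneq\cdots$ with $\bG'_j{}^{[\Gamma]}$ equal to $\bH'$ or to $\bH''$. For each successive difference $X^\sharp_j$, which is generic of its depth for the next group, Theorem~\ref{T:charexists} gives a quasicharacter $\phi'_j$ of $G'_j$ realized by $X^\sharp_j$. By construction each $\phi'_j$ is generic and $\Gamma$-stable. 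The depths collapse under Theorem~\ref{th:restriction}: the product of the $\phi'_j$ restricted to $H'$ agrees with $\xi$ on $H'_{y,s^+}$, the piece where refactorization is tested.

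Next, the top-down iteration over $i=d,d-1,\dots,0$. The obstacle flagged in the introduction is that lifting each $\xi_i$ independently produces twisted Levi sequences in $\bG$ that are not nested. A second problem is that the lifted characters of $\bG^{i}$ restricted to $H^i$ differ from $\xi_i$ by characters of positive depth, which then have to be absorbed. So first lift $\xi_d$, producing the top part of $\vec{\bG}$ inside $\bG$. Then, to lift $\xi_{d-1}$, work inside the smallest group already constructed. Its realizing element should be chosen generic relative to that group and $\Gamma$-fixed. The discrepancy $\xi_{d-1}\cdot(\text{restriction of lifts})^{-1}$ is pushed down to the next stage, adjusting the character at level $i$ so that the alternating products in Definition~\ref{def:refactorization} are trivial on $H^i_{y,r_{i-1}^+}$. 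Depths must stay strictly increasing, and the new groups must be proper and generic at each step. For both, use Lemma~\ref{L:etasXdependsoncoset}: a correction lying in $\LieZ^{*}_{-s_i}$ does not change $\eta_{s_i^+,X_i}$. Then verify $\Gamma$-stability of each coset $X_i+\LieZ^{i*}_{-s_i}$; this is automatic since the $X_i$ are $\Gamma$-fixed. Also verify that the collapsing in Theorem~\ref{th:restriction} recovers exactly $\vec{\bH}$ and the depths $\vec r$.

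I expect the main obstacle to be the genericity bookkeeping in the single-character lift, specifically establishing Proposition~\ref{P:indstep}. The problem is to find $\Gamma$-fixed $X^\sharp$ whose centralizer $\bG'$ in the current group is a twisted Levi with $\bG'^{[\Gamma]}$ exactly the prescribed group, and with $X^\sharp$ generic of the correct depth. A root of $\bG$ restricts on $\bT^{[\Gamma]}$ to a combination of roots of $\bG^{[\Gamma]}$ (or to zero), and valuations can jump. To handle this, I would first work on a $\Gamma$-stable maximal torus containing a maximal torus of $\bH'$. Then I would perturb $X$ by $\Gamma$-averaged elements of the appropriate depth, using that $|\Gamma|$ is a unit, so as to separate the root valuations level by level.
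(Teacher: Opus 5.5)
Your outer architecture matches the paper's. You lift $\xi_d$ first, build the level-$j$ lift inside the smallest twisted Levi subgroup produced at level $j+1$, fold the correction factor into $\xi_{j-1}$, and conclude with Lemma~\ref{lem:equivchardata} and Theorem~\ref{th:restriction}.

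The gap is in the single-character lift. You decompose by depth one realizing element $X\in\LieZ^{'*}_{-r}$ of $\xi$. But $X$ only encodes $\xi|_{H'_{y,r/2^+}}$. By Lemma~\ref{L:etasXdependsoncoset} it matters only modulo $\LieZ^{'*}_{-r/2}$. Below $r/2$ there is no Moy--Prasad linearization, so $X$ says nothing about $\xi$ on $H'_{y,0^+}$ outside $H'_{y,r/2^+}$. Hence your $\phi'_j$ only give $\prod_j\phi'_j|_{H'}=\varphi\xi$ with a correction $\varphi$ of depth up to $r/2$, and in general no better.

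You call $H'_{y,s^+}$ (with $s=r/2$) ``the piece where refactorization is tested,'' but that is not what Definition~\ref{def:refactorization} tests. It tests on $H^i_{y,r_{i-1}^+}$, with $r_{-1}=0$. In the top-down scheme the product at level $i$ telescopes to the level-$i$ correction $\varphi_i$, so $\varphi_i$ must have depth at most $r_{i-1}$. Your construction therefore fails whenever $r_{i-1}<r_i/2$. It always fails at the bottom level, where $\varphi_0$ must have depth zero because $\chara{\Sigma}|_{G^0_{y,0^+}}$ is the full product of all the $\phi_j$. Refactorization breaks, and in general folding such a $\varphi_i$ into $\xi_{i-1}$ also ruins the depth $r_{i-1}$ and the $H^i$-genericity needed to run the next step. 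Lemma~\ref{L:etasXdependsoncoset} cannot repair this, since it only controls each $\eta_{s_i^+,X_i}$ separately.

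The missing idea is the iteration inside Theorem~\ref{T:liftone}, with its cutoff parameter $s$:
\begin{itemize}
\item After choosing $\phi'_1$ realized by $X^\sharp$ (Theorem~\ref{T:charexists}), form the residual \emph{character} $\tau_1=\xi(\phi'_1)^{-1}|_{H'}$, which has strictly smaller depth $t_1$.
\item Realize $\tau_1$ afresh on $H'_{y,t_1}$ by a new element, and apply Proposition~\ref{P:indstep}(b) inside the centralizer $\bM_1$ of $X^\sharp$.
\item Repeat until the residual depth is at most $s$, taking $s=r_{j-1}$ at level $j$ and $s=0$ at the bottom. Termination uses discreteness of depths.
\end{itemize}
The deeper realizing elements are genuinely new data, because they depend on the non-unique extensions chosen in Theorem~\ref{T:charexists}. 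They cannot be obtained by decomposing the original $X$.

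The same cutoff is what makes the depths strictly increasing across blocks: every depth used at level $j$ exceeds $r_{j-1}$. The dichotomy (a)/(b) of Proposition~\ref{P:indstep} guarantees that every new group at level $j$ has $\Gamma$-fixed points exactly $\bH^j$. That is what makes the collapse in Theorem~\ref{th:restriction} return exactly $\vec{\bH}$ and $\vec r$.
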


Our proof will be constructive.  To motivate it, let us give a quick example to illustrate how a direct approach would fail.
Set $\bH=\bG^{[\Gamma]}$ and suppose $\Delta = (\vec{\bH},y,\vec{t},\vec{\xi})$ is a character-datum for $H$.  
As a first attempt, one might simply choose to fix a Levi sequence of $\bG$ whose fixed points give $\bH$. 

\begin{example}\label{Eg:weird}
Let $G=\GL(4,F)$ and $\Gamma = \{1,\gamma\}$ such that  $H=G^{[\Gamma]}=\GL(2,F)\times \GL(2,F)$.    Consider the \singlequasicharacter\ character-datum
$$
\Delta = (\Gi{\bH}{0}=\bH, y, t, \phii{\xi}{0}=\chi_0\circ \det\otimes \chi_1\circ \det)
$$
where $\chi_0, \chi_1$ are two characters of $F^\times$  of depth $t >0$. 

There are two $\Gamma$-stable Levi subgroups of $\bG$ whose $\Gamma$-fixed points are $\bH$: $\bG$ and $\bH$.  With the choice $\bG^0=\bG$, there exists a lift of $\phii{\xi}{0}$ 
to a quasicharacter of $\Gi{\bG}{0}$ if and only if $\chi_0=\chi_1$. 
With the choice of $\Gi{\bG}{0}=\bH$, we may always view $\phii{\xi}{0}$
as a quasicharacter of $\bG^0$ --- but $\phii{\xi}{0}$
will fail to be $G$-generic whenever $\chi_0|_{F^\times_{t}}=\chi_1|_{F^\times_{t}}$ (Remark \ref{rem:genexplained}).  Thus in general, there does not exist a direct lift of $\Delta$ to a \singlequasicharacter\ character-datum for $G$.
\end{example}

This simple example illustrates  that the choice of twisted Levi subgroups of $\bG$ appearing in the lift of a character-datum for $H$ will depend in subtle ways on the quasicharacters appearing in $\Delta$. In the following subsection, we establish the core of this dependence.

\subsection{Lifting a single quasicharacter}\label{sec:liftchar}

The main result of this section (Theorem \ref{T:liftone}) produces a lift for a single quasicharacter of a character-datum for $G^{[\Gamma]}$. The proof of this theorem relies heavily on Theorem \ref{T:charexists}, as well as a strategic construction for producing twisted Levi subgroups (Proposition \ref{P:indstep}). This construction follows techniques from \cite{Fintzen2021} and makes use of the broader notion of genericity discussed in Remark \ref{rem:genericgeneral}.

\begin{proposition} \label{P:indstep}
Let $\bH'$ be a twisted Levi subgroup of $\bH = \bG^{[\Gamma]}$ and let $\bZ' = Z(\bH')^\circ$.  Suppose that $\bM$ is a $\Gamma$-stable twisted Levi subgroup of $\bG$ containing $\bH'$ and that $\xi$ is a quasicharacter of $H'$ of depth $t>0$. 
Then there exists an $M$-generic element $X^\sharp\in \LieZ^{'*}_{-t}$ that realizes $\xi$ on $H'_{x,t}$, for any $x\in \buil(H')$.  Moreover, if we suppose further that either
\begin{enumerate}[(a)]
\item $\bM=\bG$ and $\xi$ is $H$-generic, or 
\item $\bM^{[\Gamma]}=\bH'$,
\end{enumerate}
then  $\bM' = \Cent_{\bM}(X^\sharp)$ is a $\Gamma$-invariant twisted Levi subgroup satisfying ${\bM'}^{[\Gamma]}=\bH'$.
\end{proposition}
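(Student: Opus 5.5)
We first construct $X^\sharp$. By Remark~\ref{rem:realizedcenter}, $\xi$ is realized on $H'_{x,t}$ by some $X\in \LieZ^{'*}_{-t}$. By Remark~\ref{rem:realizedcenter}(3) this does not depend on $x$, and by Lemma~\ref{L:etasXdependsoncoset} any element of the coset $X+\LieZ^{'*}_{(-t)^+}$ also realizes $\xi$ on $H'_{x,t}$. The freedom in this coset is what we use to obtain $M$-genericity.

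To do this, fix a maximal torus $\bT'$ of $\bH'$ and let $\bT=\Cent_\bG(\bT')$, a $\Gamma$-stable maximal torus of $\bG$ (\cite{PrasadYu2002}), contained in $\bM$. Over a tame splitting field $E$, for each root $\alpha\in\Phi(\bM,\bT)$ with $X(H_\alpha)\neq0$, we need $\val(X^\sharp(H_\alpha))$ either $=-t$ or $X^\sharp(H_\alpha)=0$. We follow Fintzen's technique \cite[\S3]{Fintzen2021}. Decompose $X=X_{-t}+X_{>-t}$ by choosing a representative of the image of $X$ in $\LieZ^{'*}_{-t}/\LieZ^{'*}_{(-t)^+}$, which is a finite-dimensional $\mathfrak f$-vector space (after normalizing by a uniformizer of $E$ of suitable valuation). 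Let $\bar X$ be its reduction. Consider the set of roots $\alpha$ with $\bar X(H_\alpha)=0$, and choose the lift $X^\sharp\in X+\LieZ^{'*}_{(-t)^+}$ lying in the ``Teichm\"uller'' subspace, so that $X^\sharp(H_\alpha)$ is either $0$ or of valuation exactly $-t$. Concretely, we write $\LieZ'^*\otimes E$ in terms of the $\Gal(E/F)$-stable lattice spanned by $\bar{\mathfrak f}$-rational functionals, and take the image of $\bar X$ under a $\Gal$-equivariant (tame, $p\nmid$) section of reduction. The hypothesis that $p$ does not divide the order of the Weyl group ensures $H_\alpha$ reduce to nonzero elements and that the vanishing locus is compatible with reduction. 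This gives an almost stable element that is $M$-generic of depth $-t$ in the sense of Remark~\ref{rem:genericgeneral}. Note that $X^\sharp$ is automatically $\Gamma$-fixed, since $\LieZ'\subset\LieG^{\Gamma}$ and $\Lie(\bH)^*=(\LieG^*)^\Gamma$.

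For the second part, $\bM'=\Cent_\bM(X^\sharp)$ is a twisted Levi subgroup of $\bM$ (Remark~\ref{rem:genericgeneral}, \cite[Prop.~7.1]{Yu2001}). It is $\Gamma$-stable because $X^\sharp$ is $\Gamma$-fixed and $\bM$ is $\Gamma$-stable. It contains $\bH'$ since $X^\sharp\in\LieZ^{'*}$. Thus ${\bM'}^{[\Gamma]}\supseteq \bH'$, and the task is to prove the reverse inclusion. In case (b) this is immediate: ${\bM'}^{[\Gamma]}\subseteq \bM^{[\Gamma]}=\bH'$.

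In case (a), ${\bM'}^{[\Gamma]}=\Cent_{\bH}(X^\sharp)^\circ$, viewing $X^\sharp\in\Lie(\bH)^*$. We must show this equals $\bH'$, i.e.\ that $X^\sharp(H_\beta)\ne0$ for every root $\beta\in\Phi(\bH,\bT')\setminus\Phi(\bH',\bT')$. Since $\xi$ is $H$-generic, Lemma~\ref{lem:realizedindependence} implies every realizing element in $X+\LieZ^{'*}_{(-t)^+}$, in particular $X^\sharp$, satisfies $\val(X^\sharp(H_\beta))=-t$ for such $\beta$. Hence these values are nonzero, and so $\Cent_\bH(X^\sharp)^\circ=\bH'$.

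The main obstacle is the construction of $X^\sharp$ with exact valuations $\{-t,\infty\}$ on all $\bM$-coroots: the coroots of $\bG$ restricted to $\LieZ'$ need not be defined over $F$, and a naive representative of the coset may have $X(H_\alpha)$ of strictly larger but finite valuation. We would handle this by working over $E$ with a Galois- and $\Gamma$-equivariant section of the reduction map on the graded piece, as in \cite[Lemma 3.3, Cor.~3.8]{Fintzen2021}, and then descend using the equivariance.
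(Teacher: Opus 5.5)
The gap is in the first assertion, the existence of $X^\sharp$, which is the substantive part of the proposition. You correctly single out $\Phi'=\{\alpha\in\Phi(\bM,\bT): \val(X(H_\alpha))>-t\}$. You also set the right goal: a representative of $X+\LieZ^{'*}_{(-t)^+}$ that vanishes \emph{exactly} on $H_\alpha$ for $\alpha\in\Phi'$. But the device you propose cannot produce it. There is no ``Teichm\"uller subspace'' to lift into, because Teichm\"uller representatives are multiplicative, not additive. Worse, in characteristic zero no section of the reduction map onto $\LieZ^{'*}_{-t}/\LieZ^{'*}_{(-t)^+}$ can be additive at all: that quotient is killed by $p$, while the lattice is torsion-free. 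An arbitrary lift of $\bar X$ satisfies only $\val(X^\sharp(H_\alpha))>-t$ for $\alpha\in\Phi'$, which every element of the coset already does. Your sentence that $p\nmid|W|$ makes ``the vanishing locus compatible with reduction'' is precisely the statement that needs proof, and it has to be attached to a concrete linear subspace determined by $\Phi'$.

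The missing idea, which is what the paper does, is a linear decomposition rather than a section. Over a tame Galois splitting field $E$ of $\bT$, write $\LieT(E)^*=\LieT(E)^*_\sharp\oplus\LieT(E)^*_\flat$. Here $\LieT(E)^*_\sharp$ is the annihilator of $\{H_\alpha:\alpha\in\Phi'\}$ and $\LieT(E)^*_\flat=\mathrm{span}\{d\alpha:\alpha\in\Phi'\}$. Take $X^\sharp$ to be the $\sharp$-component of $X$. Since $\Phi'$ is stable under $\Gal(E/F)$ and $\Gamma$, so are both summands, and hence $X^\sharp$ lies in $\LieS^*$ and is $\Gamma$-fixed with no further work. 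The point where $p\nmid|W|$ enters, via Fintzen's argument, is that $X^\flat$ has depth $>-t$. It then follows that $X^\sharp(H_\alpha)=0$ for $\alpha\in\Phi'$ and $\val(X^\sharp(H_\alpha))=\val(X(H_\alpha))=-t$ for $\alpha\in\Phi(\bM,\bT)\setminus\Phi'$. Working in $\LieT^*$ with the actual coroots, rather than inside $\LieZ^{'*}$ as you set it up, also spares you from controlling the integrality of the projections of $H_\alpha$ onto $\LieZ'$.

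Once $X^\sharp$ is in hand, the rest of your argument is correct and in fact cleaner than the paper's. For (b) you use the sandwich $\bH'\subseteq{\bM'}^{[\Gamma]}\subseteq\bM^{[\Gamma]}=\bH'$. For (a) you identify ${\bM'}^{[\Gamma]}=\Cent_{\bH}(X^\sharp)^\circ$ and use that $H$-genericity depends only on the coset. Together these replace the paper's passage through restricted roots and Lemma 3.3 of Latham--Nevins.
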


\begin{proof}
Choose a maximal $F$-torus $\bS$ of $\bH$ contained in $\bH'$ and let $\bT=\Cent_\bG(\bS)$ be the corresponding maximal torus of $\bG$ \cite[Lemma 3.1]{LathamNevins2023}.  Writing $\mathfrak{s}$ and $\mathfrak{t}$ for the $F$-points of  their Lie algebras, we have a unique $\Gamma$-equivariant embedding $\mathfrak{s}^*\subset \mathfrak{t}^*$.  We also embed ${\mathfrak{z}'}^*\subset \mathfrak{s}^*$ as the functionals that are trivial on $\mathfrak{s}\cap \LieH'_{der}$.  The embeddings ${\mathfrak{z}'}^*\subset \mathfrak{s}^*\subset \mathfrak{t}^*$ are depth-preserving.

Let $\bM$ be a $\Gamma$-stable twisted Levi subgroup of $\bG$ containing $\bH'$. 
Since $\bM$ is $\Gamma$-stable and contains $\bS$, it contains $\bT$.  Note that  the embedding of ${\mathfrak{z}'}^*$ into $\mathfrak{m}^*$ is independent of the choice of $\bS$ and it is with respect to this embedding that an element of $\mathfrak{z}^{'*}$ can be considered $M$-generic. 

Let $E$ denote a (tame) Galois extension over which $\bT$ splits. Then, for each $\alpha \in \Phi(\bG,\bT)$ we may consider $H_\alpha = d\alpha^\vee(1)$ as an element of $\mathfrak{t}(E) := \Lie(\bT)(E)$. 

Now let $\xi$ be a quasicharacter of $H'$ of depth $t>0$.  Let $x\in \buil(H')$.  
By Remark \ref{rem:realizedcenter},  $\xi$ is realized by an element $\X{}\in {\mathfrak{z}'}^*$ of depth $-t$ on $H'_{x,t/2^+}$. Furthermore, $\X{}$ realizes $\xi$ on $H'_{x,t}$, as well as $H'_{y,t}$ for all $y\in \buil(H')$.

Let $\Phi_M=\Phi(\bM,\bT)$ and set
$$
\Phi'=\{\alpha \in \Phi_M\mid \X{}(H_\alpha)=0 \text{ or } \val(\X{}(H_\alpha))>-t\}.
$$ 
The set $\Phi'$ is invariant under both $\Gal(E/F)$ and $\Gamma$ since $\X{}$ is. 
Since $\X{}$ has depth $-t$ it follows that for all $\alpha \in \Phi_M \smallsetminus \Phi'$ (a set that may be empty when $\bH'=\bH$) we have $\val(\X{}(H_\alpha))=-t$.  Following an argument that was used in \cite[Proposition 3.12]{Fintzen2021} and \cite[Theorem 3.3]{Fintzen2021good}, we define
$$
\LieT(E)^*_\sharp=\{X^\sharp\in \mathfrak{t}(E)^* \mid X^\sharp(H_\alpha)=0 \;\forall \alpha \in \Phi'\}
$$
and 
$$
\LieT(E)^*_\flat=\text{span}\{d\alpha \mid \alpha \in \Phi'\}.
$$
These subspaces of $\mathfrak{t}(E)^*$ are each $\Gamma$-invariant and defined over $F$.  Under our identification of $\mathfrak{t}^*$ and $\mathfrak{t}$, $\LieT(E)^*_\flat$ corresponds to $\text{span}\{H_\alpha \mid \alpha \in \Phi'\}$, whence $\mathfrak{t}(E)^*=\LieT(E)^*_\sharp\oplus \LieT(E)^*_\flat$.  Thus there exist unique $X^\sharp\in \LieT(E)^*_\sharp$ and $X^\flat\in \LieT(E)^*_\flat$ such that $\X{}=X^\sharp+X^\flat$. By construction,  $X^\flat$ has depth strictly larger than $-t$.  

Since $\X{}$ is fixed by both $\Gal(E/F)$ and $\Gamma$, the invariance of these two spaces individually implies  $X^\sharp, X^\flat \in \mathfrak{s}^*$. Given that $X^\sharp \in \mathfrak{s}^*$, its centralizer $\bM' = \Cent_{\bM}(X^\sharp)$ is a twisted Levi subgroup of $\bM$, and therefore of $\bG$, that is $\Gamma$-stable. We claim its root system is $\Phi'$ and that $X^\sharp$ is $M$-generic of depth $-t$.  Since $X^\sharp \in \X{} + \mathfrak{z}^{'*}_{(-t)^+}$, this would complete the proof of the first statement.

Namely, if $\alpha \in \Phi'$ then $X^\sharp(H_\alpha)=0$ by definition of $\LieT(E)^*_\sharp$.  If $\alpha \in \Phi_M\setminus \Phi'$ then since $X^\flat\in {\mathfrak{z}'}^*_{(-t)^+}$ we have
$$
\val(X^\sharp(H_\alpha)) = \val(\X{}(H_\alpha)-X^\flat(H_\alpha))=\val(\X{}(H_\alpha))=-t,
$$
as required.  

It remains to show that $\bM'^{[\Gamma]}=\bH'$ if either (a) or (b) hold. We deduce from \cite[Lemma 3.3]{LathamNevins2023} that, since $\bM'$ is a twisted Levi subgroup of $\bM$ such that $\Phi(\bM',\bT)=\Phi'$, this is equivalent to showing that 
$$
\{a \in \Phi(\bH,\bS) : a = \alpha|_{\bS} \text{\ for some\ } \alpha \in \Phi'\}=\Phi(\bH',\bS).
$$

In case (a), we assume $\bM=\bG$ and $\xi$ is $H$-generic.  Thus, by Lemma~\ref{lem:realizedindependence}, 
any element $\X{}$ that realizes $\xi$ is an $H$-generic element of ${\mathfrak{z}'}^*$ of depth $-t$.  This implies $\X{}(H_a)=0$ for all $a\in \Phi(\bH',\bS)$ and that $\val(\X{}(H_a))=-t$ for all $a\in \Phi(\bH,\bS)\smallsetminus \Phi(\bH',\bS)$.  Since $\X{}$ is $\Gamma$-fixed, $\X{}(H_\alpha)=\X{}(H_a)$ for every root $\alpha \in \Phi(\bG,\bT)$ that restricts to a root of $\bH$.  By definition, $\alpha\in \Phi'$ if and only if either $\X{}(H_\alpha)=0$ or $\val(\X{}(H_\alpha))>-t$; thus if $\alpha\in \Phi'$ restricts to a root of $\bH$ then we must have $\X{}(H_\alpha)=\X{}(H_a)=0$.  Therefore, every root $\alpha \in \Phi'$ that restricts to a root of $\bH$ in fact must restrict to a root of $\bH'$; and furthermore, every root of $\Phi_{\bM}\smallsetminus \Phi'$ that restricts to a root of $\bH$ must restrict to an element of $\Phi(\bH,\bS)\smallsetminus \Phi(\bH',\bS)$.  

In case (b), we assume $\bM^{[\Gamma]}=\bH'$ but not that $\xi$ (and $\X{}$) are $H$-generic.  This implies
$$
\{a \in \Phi(\bH,\bS) : a = \alpha|_{\bS} \text{\ for some\ } \alpha \in \Phi_{\bM}\}=\Phi(\bH',\bS).
$$
In particular, every root $\alpha\in \Phi'\subset \Phi_M$ that restricts to a root of $\bH$ satisfies $\alpha|_{\bS} \in \Phi(\bH',\bS)$.  Conversely if $\alpha\in \Phi_M$ is a restrictable root then its  restriction is an element $a \in \Phi(\bH',\bS)$.  Since $\X{}\in \mathfrak{z}^{'*}$ we have $\X{}(H_\alpha)=\X{}(H_a)=0$, whence $\alpha \in \Phi'$. 
\end{proof}

Let us illustrate the construction in this proof with an example.

\begin{example}\label{Eg:P:indstep}
Consider $G=\GL(4,F)$, $\Gamma$, and $H=G^{[\Gamma]}=\GL(2,F)\times \GL(2,F)$ as in Example~\ref{Eg:weird}.  Choose a quadratic extension field $E$ of $F$ and a subgroup $\bT\cong \Res_{E/F}\bG_m$ of $\GL(2)$ and set $\Gi{\bH}{0}=\bT\times \bT$.   For $y\in \buil(\Gi{\bH}{0})$ consider the \singlequasicharacter\ character-datum
$$
\Delta = (\bH^0\subsetneq \bH, y, t, \xi:=\eta\otimes\eta'),
$$  
for two $\GL(2,F)$-generic quasicharacters $\eta$, $\eta'$ of $T$ of depth $t$.
We apply Proposition~\ref{P:indstep} with  $\bH'=\bH^0$, $\bM=\bG$ and the $H$-generic character $\xi$. 
Since $H'$ is abelian, $Z'=H'$.  
An element $X\in (\LieH')^*_{-t}$ realizing $\xi|_{H'_{y, t}}$ is of the form $X=(X_1,X_2)$ with $X_i\in \LieT^*_{-t}$.

Suppose first $\eta|_{T_{t}}=\eta'|_{T_{t}}$, so that
$X_1-X_2\in \LieT^*_{(-t)^+}$.  The proof of Proposition~\ref{P:indstep} factors $X$ as $X=X^\sharp+X^\flat$ such that $$X^\sharp = (Y,Y)$$ with $Y-X_i\in \LieT^*_{(-t)^+}$ and $X^\flat = (X_1-Y,X_2-Y) \in \LieT^*_{(-t)^+}$.  We obtain $$\bM' = \Cent_{\bG}(X^\sharp)\cong\Res_{E/F}\GL(2),$$ which is a $\Gamma$-invariant twisted Levi subgroup of $\bM$ satisfying  ${\bM'}^{[\Gamma]}=\bH'$ and relative to which $X^\sharp$ is $M$-generic element.  A similar situation occurs if $\eta|_{T_{t}}=(\eta')^{-1}|_{T_{t}}$, with $X^\sharp=(Y,\overline{Y})$ where $\overline{Y}$ represents the Galois conjugate of $Y$ as an element of $E$.    
For all other $\eta,\eta'$, the root system $\Phi'$ arising in the proof is empty and thus $\LieT(E)_\flat^*=\{0\}$, yielding $\bM'=\bT\times \bT$ and the $G$-generic element $X^\sharp=(X_1,X_2)$.
\end{example}

Our next result is to show how to use the construction  of Proposition~\ref{P:indstep} iteratively to produce a lift to $G$ of any \singlequasicharacter\  character-datum for $H$.

\begin{theorem}\label{T:liftone}
Let $\bH'$ be a twisted Levi subgroup of $\bH=\bG^{[\Gamma]}$ defined over $F$ and let $x\in\buil(H')$.  Suppose $\xi$ is an $H$-generic quasicharacter of $H'$ of depth $t>0$.  Set $\Delta = (\bH'\subseteq \bH, x, t, \xi)$. Then for any $0\leq s <t$, there exists a $\Gamma$-stable character-datum $\Sigma_0=(\vec{\bG},x,\vec{r},\vec{\phi})$ for $G$ 
and a quasicharacter $\varphi$ of $H'$ of depth at most $s$ such that $\Sigma_0^\Gamma = (\bH'\subseteq \bH, x, t, \varphi \xi)$.  More precisely, for some $n\geq 0$ we have:
\begin{enumerate}
\item[1)] $s< r_0 < r_1 < \cdots < r_n = t$;
\item[2)] $\Gi{\bG}{0}\subsetneq \cdots \subsetneq \Gi{\bG}{n}\subseteq \Gi{\bG}{n+1} = \bG$ is a sequence of twisted Levi subgroups of $\bG$ such that for all $0\leq i \leq n$ we have $(\Gi{\bG}{i})^{[\Gamma]}=\bH'$;
\item[3)] $\phii{\phi}{i}$ is $\Gi{G}{i+1}$-generic quasicharacter of $\Gi{G}{i}$ of depth $r_i$ for all $0\leq i\leq n$; and
\item[4)] $\displaystyle\varphi\xi = \prod_{i=0}^n \phii{\phi}{i}|_{H'}.$
\end{enumerate}
\end{theorem}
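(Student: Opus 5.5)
The plan is a top-down iteration, building the Levi sequence from $\bG$ downward while peeling off the top-depth part of the character at each step. Set $\xi_n=\xi$ and $\bM_{n+1}=\bG$, and first apply Proposition~\ref{P:indstep} in case (a) with $\bM=\bG$. This produces a $G$-generic element $X^\sharp_n\in\LieZ^{'*}_{-t}$ realizing $\xi$ on $H'_{x,t}$. Its centralizer $\bG^n:=\Cent_\bG(X^\sharp_n)$ is a $\Gamma$-stable twisted Levi subgroup with $(\bG^n)^{[\Gamma]}=\bH'$. Since $X^\sharp_n$ is centralized by $\bG^n$, it lies in $\Lie(Z(\bG^n)^\circ)^*$. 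It has depth exactly $-t$, because it differs from the realizing element $X$ by an element of $\mathfrak{z}^{'*}_{(-t)^+}$. Theorem~\ref{T:charexists} then gives a quasicharacter $\phi_n$ of $G^n$ of depth $r_n=t$ realized by $X^\sharp_n$ on $G^n_{x,t}$. By Remark~\ref{rem:realizedcenter} it is realized there at every point, and since $X^\sharp_n$ is $G$-generic, $\phi_n$ is $G$-generic. Both $\bG^n$ and the coset $X^\sharp_n+\LieZ^{n*}_{-t/2}$ are $\Gamma$-stable, because $X^\sharp_n$ is $\Gamma$-fixed. The statement that every realizing $X$ yields a $\Gamma$-stable coset then follows from Lemma~\ref{L:etasXdependsoncoset}, whose independence of the base point is recorded in Remark~\ref{rem:realizedcenter}(3).

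Next, set $\xi_{n-1}=\xi\,(\phi_n|_{H'})^{-1}$. This is a quasicharacter of $H'$ which, by construction, is trivial on $H'_{x,t}$. I claim it has depth strictly less than $t$. To see this, note that on $H'_{x,t}$ both factors are realized by elements of the same coset modulo $\mathfrak{z}^{'*}_{(-t)^+}$, namely $X$ and $X^\sharp_n$ restricted to $\LieH'$. Then apply Lemma~\ref{L:etasXdependsoncoset} at every point of $\buil(H')$. If the depth of $\xi_{n-1}$ is at most $s$, stop: take $\varphi=\xi_{n-1}^{-1}$, so that $\varphi\xi=\phi_n|_{H'}$. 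Otherwise let $r_{n-1}\in(s,t)$ be its depth and apply Proposition~\ref{P:indstep} in case (b), now with $\bM=\bG^n$, which satisfies $\bM^{[\Gamma]}=\bH'$. This gives a $G^n$-generic $X^\sharp_{n-1}$ of depth $-r_{n-1}$ and a $\Gamma$-stable $\bG^{n-1}=\Cent_{\bG^n}(X^\sharp_{n-1})$ with $(\bG^{n-1})^{[\Gamma]}=\bH'$. Theorem~\ref{T:charexists} again produces $\phi_{n-1}$. We then iterate, with $\xi_{i-1}=\xi_i(\phi_i|_{H'})^{-1}$.

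Two things remain to be checked: the inclusions are strict and the process terminates. If $X^\sharp_{i}$ turns out to be central in $\bG^{i+1}$, so that $\bG^i=\bG^{i+1}$, then $\phi_i$ can be absorbed into $\phi_{i+1}$, since the product of two characters of the same group of increasing depths is still generic at the larger depth; only $\bG^n\subseteq\bG$ may be an equality. Strict descent of the Levi subgroups bounds the number of steps. The depths, however, might a priori accumulate above $s$ without the Levi subgroups shrinking. I would handle this by noting that once $\bG^i=\bG^{i+1}$, the merged character is handled on a fixed group $\bG^{i+1}$. There, by the isomorphism \eqref{E:Fintzenexact}, we can choose $\phi$ to extend $\xi_i$ on all of $H'_{x,s^+}$ at once, since any character of $H'$ trivial on $(H'_\der)$ pieces factors through the abelian group $(\bH'/\bH'_\der)(F)$. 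This yields a remainder of depth at most $s$ in one step.

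The main obstacle is exactly this termination-and-depth-control step. One also has to verify that refactorizing into a product of characters of $H'$ gives precisely $\Sigma_0^\Gamma=(\bH'\subseteq\bH,x,t,\varphi\xi)$, which follows from the collapsing rule of Theorem~\ref{th:restriction} since all $(\bG^i)^{[\Gamma]}=\bH'$. Finally, conditions 1)–4) are read off from the construction, with $\Gamma$-stability of $[x]$ holding because $x\in\buil(H')\subset\buil(G)^\Gamma$.
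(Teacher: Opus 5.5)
Your construction matches the paper's: Proposition~\ref{P:indstep}(a) at the top depth, Theorem~\ref{T:charexists} on the centralizer, divide off, iterate with case (b) inside the current Levi, stop once the remainder has depth at most $s$, and merge steps where the Levi did not shrink. The gap is your termination step, which you single out as the main obstacle; the remedy you propose is false.

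\textbf{Why the remedy fails.} When $\bG^i=\bG^{i+1}=:\bM$ there is in general \emph{no} character of $M$ agreeing with $\xi_i$ on $H'_{x,s^+}$. Such agreement would force $\xi_i$ to be trivial on $H'_{x,s^+}\cap[M,M]$. Applied to $\bM$, the isomorphism \eqref{E:Fintzenexact} only extends characters that already vanish on the derived part of $\bM$. Your remark that characters of $H'$ factor through $(\bH'/\bH'_{\der})(F)$ concerns $\bH'_{\der}$, not $H'\cap[M,M]$.

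Centrality of $X_i^\sharp$ only says that the depth-$r_i$ layer of $\xi_i$ comes from $M$. The lower layers are realized by elements of $\LieZ^{'*}$ whose values on coroots of $\bM$ that do not restrict to roots of $\bH$ can have valuation equal to minus their depth.

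Example~\ref{Eg:T:liftone} is a counterexample. There $\bM_1=\Res_{E/F}\GL(2)$, and characters of $M_1$ restrict to $H'=T\times T$ as $\zeta\otimes\zeta$. So a remainder $\eta_1\otimes\eta_2$ with $\eta_1\eta_2^{-1}$ of depth in $(s,r_i)$ cannot be matched on $H'_{x,s^+}$. Indeed, in that example the Levi stays equal to $\bM_1$ for several steps and only later drops to $\bT\times\bT$, which your one-step shortcut would contradict.

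\textbf{The correct termination argument.} The worry about depths accumulating above $s$ does not arise, and the paper settles termination in one line. Each $t_i$ is the depth of a quasicharacter of $H'$. By Remark~\ref{rem:realizedcenter}(3) and Lemma~\ref{L:etasXdependsoncoset}, it is therefore a jump of the filtration $r\mapsto H'_{x,r}$ at the fixed point $x$. These jumps form a discrete subset of $\mathbb{R}$, so a strictly decreasing sequence of them bounded below by $s$ is finite. Replace your last argument with this and keep your after-the-fact merging of repeated Levis.

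\textbf{A smaller repair.} Apply Theorem~\ref{T:charexists} with its parameter chosen so that $X_n^\sharp$ realizes $\phi_n$ on $G^n_{x,(t/2)^+}$, not just on $G^n_{x,t}$; do the same at each later step. Otherwise you only know that the realizing coset is $\Gamma$-stable modulo $\LieZ^{n*}_{(-t)^+}$. Definition~\ref{def:gammastabledatum} needs $\Gamma$-stability of the finer coset modulo $\LieZ^{n*}_{-t/2}$.
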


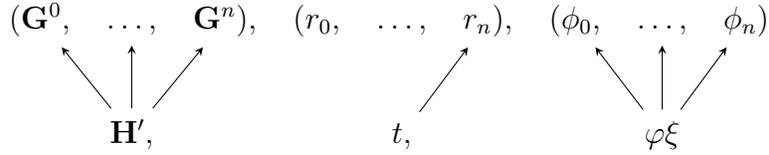
\begin{figure}[!htbp]
\begin{tikzpicture}
  \matrix (m) [matrix of math nodes,row sep=2em,column sep=0.5em,minimum width=0.1em]
  {
    (\Gi{\bG}{0}, &\dots, &\Gi{\bG}{n}), & (r_0, &\dots, & r_n),& (\phii{\phi}{0}, &\dots, & \phii{\phi}{n}) \\
    &   \bH', & & &t, & & & \varphi\xi\\};
  \path[-stealth]
  (m-2-2) edge node [left] {} (m-1-1)
  (m-2-2) edge node [right] {} (m-1-2)
  (m-2-2) edge node [right] {} (m-1-3)
  (m-2-5) edge node [left] {} (m-1-6)
  (m-2-8) edge node [left] {} (m-1-7)
  (m-2-8) edge node [right] {} (m-1-8)
  (m-2-8) edge node [right] {} (m-1-9);
\end{tikzpicture}
\caption{Lift (top) produced by Theorem \ref{T:liftone}, given the $H$-generic quasicharacter $\xi$ of $\bH'$ of depth $t=r_n$ (bottom). The $\Gamma$-fixed point of the lift is $(\bH',t,\varphi\xi)$.}
\end{figure}

\begin{proof}[Proof of Theorem \ref{T:liftone}]
Set $\bZ'= Z(\bH')^\circ$ and suppose $\xi$ is an $H$-generic quasicharacter of $H'$ of depth $t > 0$. By case (a) of Proposition~\ref{P:indstep} (applied with $\bM=\bG$), there exists a $G$-generic element $X^\sharp\in\LieZ^{'*}_{-t}$ that realizes $\xi|_{H'_{x,t}}$. 
Furthermore, $\bM_1 = \Cent_\bG(X^\sharp)$ is a $\Gamma$-invariant Levi subgroup of $\bG$ satisfying $\bM_1^{[\Gamma]} = \bH'$. By Theorem \ref{T:charexists}, we may choose 
a quasicharacter $\phii{\phi}{1}'$ of $M_1$ of depth $t$ which is realized by the $\Gamma$-fixed element $X^\sharp$ on $(M_1)_{x,t/2^+}$. 

For any such choice of $\phii{\phi}{1}'$, 
$\xi$ and $\phii{\phi}{1}'$ restrict to the same quasicharacter of $H'_{x,t}$. 
It follows that the quasicharacter $\tau_1 = \xi(\phii{\phi}{1}')^{-1}|_{H'}$ has depth $t_1 < t$. 

If $t_1\leq s$, we are done. Indeed, we set $n=0, r_0=t, \Gi{\bG}{0} = \bM_1, \phii{\phi}{0} = \phii{\phi}{1}'$ and $\varphi = \tau_1^{-1}$. By construction, $\Gi{\bG}{0}$ is a twisted Levi subgroup of $\bG$, $\phii{\phi}{0}$ if $G$-generic of depth $r_0$, $\varphi$ is a quasicharacter of $H'$ of depth $t_1\leq s < r_0$ and $\varphi\xi = \phii{\phi}{0}|_{H'}$. Furthermore, the character-datum $(\Gi{\bG}{0}\subseteq \bG, x, r_0, \phii{\phi}{0})$ is $\Gamma$-stable in the sense of Definition \ref{def:gammastabledatum}, with $\Gamma$-fixed point equal to $(\bH'\subseteq \bH, x, t, \varphi \xi)$.

If $t_1 > s$, then set $t_0=t$ and $X_0^\sharp=X^\sharp$.  We proceed in two phases.  The first is to inductively construct tuples $(\bM_{i}, \phii{\phi}{i}', X_{i-1}^\sharp, \tau_i, t_i)$ for $i> 1$ such that 
\begin{itemize}
\item $\bM_i^{[\Gamma]}=\bH'$, 
\item $\phii{\phi}{i}'$ is a $M_{i-1}$-generic character of $M_i$ of depth $t_{i-1}$ realized on $(M_i)_{x,(t_{i-1}/2)+}$ by the ($\Gamma$-fixed) element $X_{i-1}^\sharp\in \LieZ^{'*}_{-(t_{i-1})}$ whose centralizer in $\bM_{i-1}$ is $\bM_i$, and
\item $\tau_i$ is a character of $H'$ of depth $t_i<t_{i-1}$.
\end{itemize}
We stop when $t_i\leq s$.

Suppose the tuple for $i$ has been constructed.  Applying case (b) of Proposition \ref{P:indstep} to $\bM=\bM_i$, $\xi=\tau_i$ of depth $t=t_i$ yields that we may choose a $M_i$-generic element $X_i^\sharp\in \LieZ^{'*}_{-t_i}$ that realizes $\tau_i|_{H'_{x,t_i}}$, as well as a $\Gamma$-invariant twisted Levi subgroup $\bM_{i+1} = \Cent_{\bM_i}(X_i^\sharp)$ of $\bM_i$  
satisfying $\bM_{i+1}^{[\Gamma]} = \bH'$. As done above, Theorem \ref{T:charexists} allows us to choose a quasicharacter $\phii{\phi}{i+1}'$ of $M_{i+1}$ of depth $t_{i}$ which is realized by $X_i^\sharp$ on $(M_{i+1})_{x,t_i/2^+}$. By construction, $X_i^\sharp$  is $\Gamma$-fixed and $\phii{\phi}{i+1}'$ is $M_i$-generic of depth $t_i$.  Furthermore, 
$\phii{\phi}{i+1}'$ and $\tau_{i}$ have the same restriction to $H'_{x,t_i}$. 
Setting $\tau_{i+1} = \tau_i(\phii{\phi}{i+1}')^{-1}|_{H'}$, we have that $\tau_{i+1}$ is a quasicharacter of $H'$ of depth $t_{i+1} < t_i$.  

Note that this process is guaranteed to terminate in a finite number of steps since $t_{i+1} < t_i$ for all $i\geq 0$, and each $t_i$ corresponds to the depth of a character, which is a discrete set.  Let $m=i+1$ denote the first index for which $t_{i+1}\leq s$.

We have produced a nested sequence of (not necessarily distinct) twisted Levi subgroups
$$
\bM_m \subseteq \bM_{m-1}\subseteq \cdots \subseteq \bM_{2}\subseteq \bM_1.
$$
The second phase  is to gather the tuples above into a character-datum.  

Let $n+1$ be the number of distinct twisted Levi subgroups of this sequence. Rename and reindex the resulting sequence as
$$
\Gi{\bG}{0}\subsetneq \Gi{\bG}{1} \subsetneq \cdots \subsetneq \Gi{\bG}{n} \subseteq \Gi{\bG}{n+1} =\bG.
$$
For each $0\leq k \leq n$ let $j_k$ be the largest index such that $\bM_{j_k}=\Gi{\bG}{k}$.  Then $j_0=m>j_1>\cdots>j_n\geq 1$ and we have
$$
\Gi{\bG}{k}=\bM_{j_k}=\cdots=\bM_{j_{k+1}+1}.
$$
By construction, each $\Gi{\bG}{k}$ satisfies $(\Gi{\bG}{k})^{[\Gamma]}=\bH'$.
Set
$$
r_k := t_{j_{k+1}+1} \quad \text{and} \quad \phii{\phi}{k} = \prod^{j_{k+1}+1}_{i=j_k}\phii{\phi}{i}'.
$$
Since the quasicharacters $\phii{\phi}{i}'$ with 
$j_{k+1}+1<i\leq j_k$ 
have depths strictly less than $r_k$,   the quasicharacter $\phii{\phi}{k}$ of $G^k$ has depth $r_k$. Moreover, $\phii{\phi}{k}|_{G^k_{x,r_k}}$ coincides with $\phii{\phi}{j_{k+1}+1}'|_{(M_{j_{k+1}+1})_{x,t_{j_{k+1}+1}}}$, which is $M_{j_{k+1}}=\Gi{G}{k+1}$-generic;  thus $\phii{\phi}{k}$ is $\Gi{G}{k+1}$-generic of depth $r_k$. Furthermore, $\phii{\phi}{k}$ is realized by the $\Gamma$-fixed element $\displaystyle \sum_{i=j_k}^{j_{k+1}+1}X_{i-1}^\sharp$ on $\Gi{G}{k}_{x,r_k/2+}$.  
Thus,  
the character-datum $(\vec{\bG},x,\vec{r},\vec{\phi})$ is $\Gamma$-stable by Lemma \ref{lem:defstabilityequiv}.

By our choice of $m$ we have $s < t_{m-1}\leq r_0 < r_1 < \cdots < r_n = t$. Set $\varphi=\tau_m^{-1}$.  This is a quasicharacter of $\bH'$ of depth $t_{m}\leq s$ and we have 
$$
\varphi\xi = \prod_{i=1}^{m}\phii{\phi}{i}'|_{H'} = \prod_{k=0}^n \phii{\phi}{k}|_{H'},
$$
by construction.
\end{proof}

Let us continue our running example to illustrate the application of Theorem~\ref{T:liftone}.  

\begin{example} \label{Eg:T:liftone}
Let $\Delta$ be as in Example~\ref{Eg:P:indstep} and suppose we are in the first (most interesting) case that yields $X^\sharp=(Y,Y)$, $\bM_1=\Res_{E/F}\GL(2)$.  We set $s=0$ as our target depth.

Following the proof of Theorem~\ref{T:liftone}, we must choose a quasicharacter $\phi_1'$ of $M_1$ realized by $X^\sharp$ on $(M_1)_{x,t/2^+}$.  
Such a character is of the form $\zeta_1\circ \det_E$ where $\det_E:\GL(2,E)\to E^\times$ is the determinant map, and $\zeta_1$ is a character of $E^\times$ of depth $t$.  This leads to $\tau_1=\eta(\zeta_1^{-1})\otimes \eta'(\zeta_1^{-1})$ of strictly lower depth $t_1$; say it is realized by $(Y_1,Y_1')$, which need not be $H$-generic.

We now repeat the application of Proposition~\ref{P:indstep}, relative to $\bM=\bM_1$.  If $Y_1-Y_1'\in \LieT^*_{x,(-t_1)+}$ then in the proof of Proposition~\ref{P:indstep} we have $\Phi'=\Phi_M$, which yields $\bM_2=\bM_1$; thus in the proof of Theorem~\ref{T:liftone} we repeat the process of the preceding paragraph.

So suppose that after $i$ iterations we finally have $X_{i-1}^\sharp=(Y_i,Y_i')$ such that $Y_i-Y_i'\notin \LieT^*_{x,(-t_i)+}$.  Then $\Phi'$ is empty, yielding $X_i^\sharp = X_{i-1}^\sharp$, $\bM_{i+1}=\bT\times \bT$ and $X_i^\sharp$ is $M_1$-generic.  A quasicharacter $\phi_{i+1}'$ of $M_{i+1}=T\times T$ realized by $X_i^\sharp$ 
has depth 
$t':=t_{i+1}<t_i$.  Successive iterations will produce characters of the same group $M':=M_{i+1}$ (which need not be $M_1$-generic) of ever-decreasing depth until $t_m=0$.  Then $\Sigma_0 = (\Gi{\bG}{0}=\bM' \subsetneq \Gi{\bG}{1}=\bM_1 \subsetneq \Gi{\bG}{2}=\bG, x, (t',t), (\phi_0, \zeta\circ \det))$ where $\phi_0$ and $\zeta$ are each products of the quasicharacters constructed along the way.

Then in particular the character $\xi$ 
has been factored, up to the depth-zero twist by $\tau_m$, 
as a product of an $M_1$-generic character $\phi_0\otimes \phi_0'$ of $\bM_2=\bH'$ and the restriction of  a $G$-generic character $\zeta\circ \det_E$ of $\bM_1$. 
\end{example}

We extract an observation from the proof of Theorem~\ref{T:liftone}.

Given a \singlequasicharacter\ character-datum $\Delta = (\bH'\subseteq \bH,x,t,\xi)$ for $H$, the
inductive proof of  Theorem~\ref{T:liftone} relies at each step $i\geq 0$ on choosing two elements (notation as in the proof):
\begin{itemize}
\item an $M_{i}$-generic element $X_{i}^\sharp\in\LieZ^{'*}_{-t_{i}}$ that realizes a character $\tau_i|_{H'_{x,t_i}}$ (whose existence is proven via a sample construction in Proposition~\ref{P:indstep});  and
\item a quasicharacter $\phii{\phi}{i+1}'$ of $M_{i+1}$ of depth $t_{i}$ which is realized by $X_{i}^\sharp$ on $(M_{i+1})_{x,t_{i}/2^+}$ (whose existence is guaranteed by Theorem~\ref{T:charexists}).
\end{itemize}
Given a choice of $s$ and of these inputs, the output of Theorem~\ref{T:liftone} is a character-datum $\Sigma_0$ for $G$, together with a quasicharacter $\varphi$ of $H'$ of depth at most $s$.
Let $\mathrm{Lift}(\Delta)$ denote the set of couples $(\Sigma_0,\varphi)$ obtained by varying $0\leq s < t$ and $X_i^\sharp$ and $\phii{\phi}{i+1}'$ over all valid choices.

\begin{proposition}\label{prop:resthenliftsingle}
Let $\Sigma$ be a $\Gamma$-stable character-datum for $G$ such that $\Sigma^\Gamma$ is a single-quasicharacter character-datum for $H$. Then $(\Sigma,\triv)\in \mathrm{Lift}(\Sigma^\Gamma)$.
\end{proposition}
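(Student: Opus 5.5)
Write $\Sigma=(\vec{\bG},y,\vec{r},\vec{\phi})$ with $\vec{\bG}=(\Gi{\bG}{0}\subsetneq\cdots\subsetneq\Gi{\bG}{d}\subseteq\bG)$. By Lemmas~\ref{lem:gammafixedy} and \ref{lem:equivchardata} we may take $y$ to be $\Gamma$-fixed. Since $\Sigma^\Gamma$ is a single-quasicharacter datum, Theorem~\ref{th:restriction} shows that all $(\Gi{\bG}{i})^{[\Gamma]}$ for $0\le i\le d$ equal a single group $\bH'$. Hence
\[
\Sigma^\Gamma=(\bH'\subseteq\bH,y,r_d,\xi),\qquad \xi=\prod_{i=0}^d\phi_i|_{H'}.
\]
We must show that the inductive procedure in the proof of Theorem~\ref{T:liftone} can be steered to output exactly $\Sigma$ with $\varphi=\triv$. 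We take $s$ with $0\le s<r_0$ and exhibit choices of the $X_i^\sharp$ and $\phi_{i+1}'$, one step per index, running from $i=d$ down to $i=0$. The choice is $\bM_{k}=\Gi{\bG}{d+1-k}$, with $X_{k-1}^\sharp$ a $\Gamma$-fixed element realizing $\phi_{d+1-k}$, and $\phi_k'=\phi_{d+1-k}$, for $k=1,\dots,d+1$.

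By Lemma~\ref{lem:defstabilityequiv}, for each $i$ there is a $\Gamma$-fixed, $\Gi{G}{i+1}$-generic element $X_i\in\LieZ^{i*}_{-r_i}$ realizing $\phi_i$ on $\Gi{G}{i}_{y,s_i^+}$, with $\Cent_{\Gi{\bG}{i+1}}(X_i)=\Gi{\bG}{i}$. Being $\Gamma$-fixed and central in $\Gi{\bG}{i}$, this $X_i$ lies in $(\LieZ^{i*})^\Gamma\subseteq\LieZ^{'*}$, where $\LieZ'$ is the Lie algebra of $Z(\bH')^\circ$. We verify the steps inductively. At step $k$ the running character is
\[
\tau_{k-1}=\xi\prod_{j>d+1-k}\phi_j^{-1}\big|_{H'}=\prod_{i\le d+1-k}\phi_i|_{H'}.
\]
Its depth is $r_{d+1-k}$, because the remaining factors have strictly smaller depth. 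It is realized on $H'_{y,r_{d+1-k}}$ by the restriction of $X_{d+1-k}$, since the lower-depth factors are trivial there. So $X_{k-1}^\sharp:=X_{d+1-k}$ is an admissible choice: it is $\bM_{k-1}=\Gi{\bG}{d+2-k}$-generic, with $\bM_0:=\bG$. Its centralizer in $\bM_{k-1}$ is $\Gi{\bG}{d+1-k}=\bM_k$, and this group has $\Gamma$-fixed part $\bH'$. Next, $\phi_k'=\phi_{d+1-k}$ is a quasicharacter of $\bM_k$ of depth $r_{d+1-k}$ realized by $X_{k-1}^\sharp$, so it is a valid output of the step in which Theorem~\ref{T:charexists} is invoked.

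After the final step the running character is $\tau_{d+1}=\xi\prod_{i=0}^{d}\phi_i^{-1}|_{H'}=\triv$, of depth $0\le s$. The process therefore stops with $\varphi=\tau_{d+1}^{-1}=\triv$. The groups $\bM_k$ are distinct, except that possibly $\Gi{\bG}{d}=\bG$. So the regrouping phase merges nothing and returns $\vec{\bG}$, $\vec{r}$ and $\vec{\phi}$ unchanged, up to the convention for the top index.

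The main obstacle is the claim that Lift allows arbitrary valid choices at each step. Proposition~\ref{P:indstep} only constructs one particular $X^\sharp$, namely the $\sharp$-part of a decomposition. I would read the definition of $\mathrm{Lift}(\Delta)$ as allowing any $\bM_i$-generic $X_i^\sharp\in\LieZ^{'*}_{-t_i}$ realizing $\tau_i$ whose centralizer has fixed points $\bH'$. If the exact sharp-part is required, I would check that $X_i$ is already the sharp part of itself. The set $\Phi'$ computed from $X_i$ in $\Phi(\Gi{\bG}{i+1},\bT)$ is exactly $\Phi(\Gi{\bG}{i},\bT)$, by genericity. Since $X_i$ is central in $\Gi{\bG}{i}$, it kills the $H_\alpha$ for $\alpha\in\Phi'$, so $X_i\in\LieT(E)^*_\sharp$ and hence $X^\flat=0$. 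A secondary point to check is the edge case $\Gi{\bG}{d}=\bG$, and the corresponding indexing convention in the first step, where case (a) requires $H$-genericity of $\xi$.
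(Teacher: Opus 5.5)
Your proposal is correct and is essentially the paper's proof. You make the same choices $X^\sharp_{k-1}=X_{d+1-k}$, $\bM_k=\bG^{d+1-k}$, $\phi'_k=\phi_{d+1-k}$ (the paper fixes $s=0$), so the running character loses one $\phi_j$ per step and ends at $\tau_{d+1}=\triv$. You also read $\mathrm{Lift}$ exactly as the paper defines it, allowing any valid $M_i$-generic $X_i^\sharp$ that realizes $\tau_i$ on $H'_{x,t_i}$.

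Your fallback paragraph is therefore unnecessary, and as stated it would not work anyway. The element $X_i$ alone realizes $\tau$ only on $H'_{y,r_i}$, not on $H'_{y,r_i/2^+}$, because the lower factors $\phi_{i'}$ with $r_{i'}>r_i/2$ need not vanish there. So $X_i$ is not an admissible starting element for the decomposition in Proposition~\ref{P:indstep}.
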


\begin{proof}
Let $n+1$ be the length of the character-datum $\Sigma = (\vec{\bG},y,\vec{r},\vec{\phi})$. Given our hypothesis, there exists a twisted Levi subgroup $\bH'\subseteq \bH$ such that $\Gi{\bG}{j,[\Gamma]}=\bH'$ for all $0\leq j\leq n$. By Lemmas~\ref{lem:equivchardata} and \ref{lem:gammafixedy} we may assume $y \in \buil(H')$.
Since $\Sigma$ is $\Gamma$-stable, for all $0\leq j\leq n$, there exists a $\Gamma$-fixed element $\X{j}\in\LieZ^{j*}_{-r_j}$ that is $G^{j+1}$-generic of depth $-r_j$, realizes $\phii{\phi}{j}$ on $\Gi{G}{j}_{y,s_j^+}$, and satisfies $\Cent_\bG(\X{j}) = \Gi{\bG}{j}$ (Lemma~\ref{lem:defstabilityequiv}). Then the $\Gamma$-fixed point of $\Sigma$ is
$$\Sigma^\Gamma = (\bH'\subseteq \bH, y, r_n, \xi),$$
where $\displaystyle \xi = \prod_{j=0}^n\phii{\phi}{j}|_{H'}$ is a character of depth $r_n$ realized by $\displaystyle \sum_{j=0}^n \X{j}$.  

We now go through the steps of the proof of Theorem~\ref{T:liftone} with $s=0$, making valid choices of $(X_i^\sharp,\phi_{i+1}')$ at each stage that result in the couple $(\Sigma,\triv)$. 

At the $i=1$ step, note that since $\xi$ and $\phi_n$ have the same restriction to $H'_{y,r_n}$, we may choose $X_0^\sharp=X_n$.  Then $\bM_1 = \Cent_\bG(\X{n})=\Gi{\bG}{n}$ and we may choose $\phi_{1}'=\phii{\phi}{n}$ since this is a quasicharacter of $M_1$ of depth $r_n$ which is realized by the element $X_0^\sharp\in\LieZ^{'*}_{-r_n}$ on $(M_1)_{y,r_n/2^+}$.  Then $\tau_1 = \xi(\phii{\phi}{1}')^{-1}|_{H'} = \prod_{j=0}^{n-1}\phii{\phi}{j}|_{H'}$, where this is a quasicharacter of depth $r_{n-1} < r_n$. We have thus produced the tuple $$\left( \bM_1 = \bG^n, \,\phii{\phi}{1}' = \phii{\phi}{n}, \,\X{0}^\sharp = \X{n}, \,\tau_1 = \prod_{j=0}^{n-1}\phii{\phi}{j}, \, t_1 = r_{n-1} \right).$$
Suppose that for some step $1\leq i\leq n$ we have produced the tuple
$$
\left( \bM_i = \bG^{n-i+1}, \,\phii{\phi}{i}' = \phii{\phi}{n-i+1}, \,\X{i-1}^\sharp = \X{n-i+1}, \,\tau_i = \prod_{j=0}^{n-i}\phii{\phi}{j}, \, t_i = r_{n-i} \right).
$$ 
Since $\tau_i$ and $\phi_{n-i}$ agree on $H'_{y,t_i}$, which is realized by $X_{n-i}$, we may choose $X_{i}^\sharp=X_{n-i}$; its centralizer in $\bM_i=\bG^{n-i+1}$ is $\bM_{i+1}=\bG^{n-i}$ and as above we may choose $\phi_{i+1}'=\phi_{n-i+1}$.  Then $\tau_{i+1}=\tau_i(\phii{\phi}{i+1}')^{-1}|_H' = \prod_{j=0}^{n-i-1}\phii{\phi}{j}$, and this is a quasicharacter of depth $t_{i+1}=r_{n-i-1}$.

The algorithm terminates on $i=n+1$ since then $\tau_{n+1}=\triv$, which has depth zero. The corresponding output is $\Sigma_0=\Sigma$ and $\varphi=\tau_{n+1}^{-1}=\triv$, as we wished to show.
\end{proof}

Proposition~\ref{prop:resthenliftsingle} will be the key step in deducing that Theorem~\ref{th:lift} produces all $\Gamma$-stable character-data for $G$ (Proposition~\ref{prop:resthenlift}).

\subsection{Lifting the character-datum in sequence}\label{sec:completelift}

With Theorem \ref{T:liftone} in hand, we are now in a position to prove Theorem~\ref{th:lift}, the main result of this paper. 

Let $\bH = \bG^{[\Gamma]}$, and let $\Delta = (\vec{\bH},y,\vec{t},\vec{\xi})$ be a character-datum for $H$. That is, we have a sequence of twisted Levi subgroups
$$
\Gi{\bH}{0} \subsetneq \Gi{\bH}{1}\subsetneq \cdots \subsetneq \Gi{\bH}{n} \subseteq \Gi{\bH}{n+1}=\bH
$$
and for $0\leq i \leq n$, quasicharacters $\phii{\xi}{i}$ of $\Gi{H}{i}$ that are $\Gi{H}{i+1}$-generic of depth $t_i$. 

To construct a character-datum $\Sigma$ for $G$ whose corresponding semisimple character restricts to $\chara{\Delta}$, note that we cannot simply apply Theorem \ref{T:liftone} to all single-quasicharacter character-data $(\Gi{\bH}{i}\subseteq \Gi{\bH}{i+1},y,t_i,\phii{\xi}{i}), 0\leq i\leq n,$ 
independently as illustrated in Figure \ref{fig:liftcharsind}.  Doing so may produce Levi sequences that are incompatible with one another (that is, in the notation of the figure, $\Gi{\bG}{j,d_j} \not\subset \Gi{\bG}{j+1,0}$ for some $0\leq j\leq n-1$), as illustrated by Example~\ref{Eg:incompatible}. This naive lift could also produce quasicharacter sequences whose depths are not strictly increasing. Furthermore, even if neither obstruction occurs, the fixed-point datum of this lift, whose quasicharacter sequence is given by $(\phii{\varphi}{0}\phii{\xi}{0},\dots,\phii{\varphi}{n}\phii{\xi}{n})$, is unlikely to produce a semisimple character whose restriction to $\groupp{\Delta}$ coincides with  $\chara{\Delta}$, since the correction factors $\phii{\varphi}{i}$ generally have positive depth.

\begin{figure}[!htbp]
\begin{center}
    \begin{tikzpicture}
  \matrix (m) [matrix of math nodes,row sep=2em,column sep=0.5em,minimum width=0.1em]
  {
    &{} &{} &\Gi{\bG}{n,0}, & \dots, & \Gi{\bG}{n,d_n}  &{} &{} &  \phii{\phi}{n,0}, &\dots, & \phii{\phi}{n,d_n} \\
    & \Gi{\bG}{1,0}, &\dots, &\Gi{\bG}{1,d_1} &{} &{} &\phii{\phi}{1,0}, &\dots, &\phii{\phi}{1, d_1}\\
    \Gi{\bG}{0,0}, &\dots, &\Gi{\bG}{0,d_0} &{} &{} &\phii{\phi}{0,0}, &\dots, &\phii{\phi}{0,d_0}\\
    &\Gi{\bH}{0}, &\Gi{\bH}{1}, &\dots, &\Gi{\bH}{n} &{}  &\phii{\varphi}{0}\phii{\xi}{0}, &\phii{\varphi}{1}\phii{\xi}{1}, &\dots, &\phii{\varphi}{n}\phii{\xi}{n}\\};
       
\path[-stealth]
  (m-4-2) edge (m-3-1)
  (m-4-2) edge (m-3-2)
  (m-4-2) edge (m-3-3)
  (m-4-7) edge (m-3-6)
  (m-4-7) edge (m-3-7)
  (m-4-7) edge (m-3-8)
  (m-4-3) edge (m-2-2)
  (m-3-3) edge (m-2-3)
  (m-4-3) edge (m-2-4)
  (m-4-8) edge (m-2-7)
  (m-3-8) edge (m-2-8)
  (m-4-8) edge (m-2-9)
  (m-4-5) edge (m-1-4)
  (m-4-5) edge (m-1-5)
  (m-4-5) edge (m-1-6)
  (m-4-10) edge (m-1-9) 
  (m-4-10) edge (m-1-10)
  (m-4-10) edge (m-1-11)
  ;

\path
  (m-4-3) edge (m-3-3)
  (m-4-8) edge (m-3-8)
;

\end{tikzpicture}
\end{center}
\caption{\label{fig:liftcharsind}
Naive application of Theorem \ref{T:liftone} to the single-quasicharacter character-data $(\Gi{\bH}{i}\subseteq \Gi{\bH}{i+1}, y, t_i, \phii{\xi}{i})$, $0\leq i\leq n$.}
\end{figure}
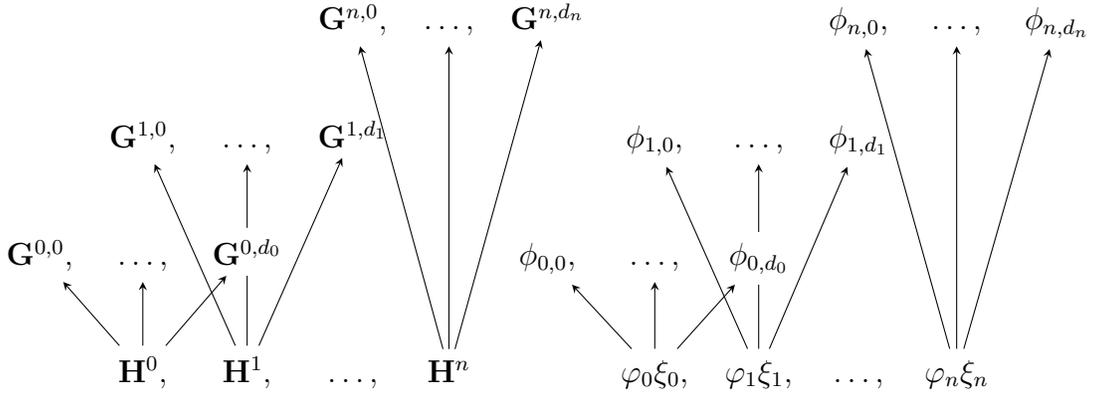

\begin{example} \label{Eg:incompatible}
Let $G=\GL(4,F)$, $H=G^{[\Gamma]}=\GL(2,F)\times\GL(2,F)$ and $\bH^0=\bT\times\bT$ as in Example~\ref{Eg:P:indstep}.  
Consider the character-datum for $H$ given by
$$
\Delta = (\bH^0\subsetneq \bH^1=\bH^2=\bH, y, (t_0,t_1), (\xi_0:=\eta\otimes\eta, \xi_1:=(\chi_+\circ\det) \otimes (\chi_-\circ\det))),
$$
where $\eta$ is a $\GL(2,F)$-generic quasicharacter of $T\cong E^\times$ of depth $t_0>0$ and $\chi_\pm$ are quasicharacters of $F^\times$ of depth $t_1>t_0$.  

Applying Theorem~\ref{T:liftone} with $(\bH',\xi,s)=(\bH^0,\xi_0,0)$ yields, as in Example~\ref{Eg:T:liftone}, the twisted Levi subgroup $\bG^{0,0}=\Res_{E/F}\GL(2)$ and the quasicharacter $\phi_0=\eta\circ \det$ of $G^{0,0}$ of depth $t_0$.  However, if  $\chi_+|_{F_{t_1}^\times}\neq \chi_-|_{F_{t_1}^\times}$, then applying Theorem~\ref{T:liftone} with $\bH'=\bH$ yields, as in Example~\ref{Eg:weird},  $\bG^{1,0}=\bH$ and the $G$-generic quasicharacter $\phi_1=\xi_1$ of depth $t_1$.   (More generally, this occurs whenever $\chi_+(\chi_-)^{-1}$ has depth greater than $t_0$.)
 These two do not combine to form a twisted Levi sequence in $\bG$ since $\bG^{0,0}$ and $\bG^{1,0}$ are incomparable with respect to inclusion.
\end{example}

The solution for obtaining a valid lift 
is to iteratively construct $\Sigma$ 
starting from the quasicharacter of $\Delta$ of largest depth. Let us sketch the idea before providing the detailed proof. 
The first step (referred to as Step $n$ in the proof of Theorem \ref{th:lift}) 
consists of applying Theorem \ref{T:liftone} to the tuple $(\Gi{\bH}{n} \subseteq \bG^{[\Gamma]}, y, t_n, \phii{\xi}{n})$ with $s = t_{n-1}$. This will produce quasicharacters of twisted Levi subgroups of $\bG$ whose $\Gamma$-fixed points are $\Gi{\bH}{n}$ and such that the product of the quasicharacters (upon restriction to $\Gi{H}{n}$) is a twist of $\phii{\xi}{n}$ by a correction factor, which is a quasicharacter $\phii{\varphi}{n}$ of $\Gi{H}{n}$ of depth at most $t_{n-1}$. We then twist $\phii{\xi}{n-1}$ by the inverse of this correction factor, $\phii{\varphi}{n}^{-1}$, 
and continue the iterative construction by applying Theorem \ref{T:liftone} to the tuple $(\Gi{\bH}{n-1}\subseteq (\Gi{\bG}{n,0})^{[\Gamma]}, y, t_{n-1}, \phii{\varphi}{n}^{-1}\phii{\xi}{n-1})$ with $s=t_{n-2}$. Continuing in this fashion, that is, twisting successive quasicharacters of the sequence $\vec{\xi}$ by the preceding correction factors as illustrated in Figure~\ref{fig:stepj}, 
we are able to construct a coherent lift $\Sigma$, such that $\Sigma^\Gamma$ is a refactorization of $\Delta$.  

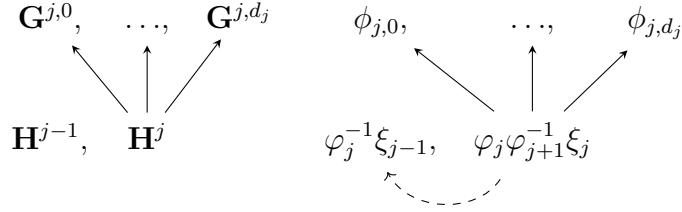
\begin{figure}[!h]
\begin{center}
    \begin{tikzpicture}
  \matrix (m) [matrix of math nodes,row sep=2em,column sep=0.5em,minimum width=0.1em]
    {
    \Gi{\bG}{j,0}, &{\dots}, &\Gi{\bG}{j,d_{j}} & &\phii{\phi}{j,0}, &{\dots}, &\phii{\phi}{j,d_{j}}\\
    \Gi{\bH}{j-1}, &\Gi{\bH}{j} &{}  & & \phii{\varphi}{j}^{-1}\phii{\xi}{j-1}, &\phii{\varphi}{j}\phii{\varphi}{j+1}^{-1}\phii{\xi}{j} &{}\\};
\path[-stealth]
    (m-2-2) edge (m-1-1)
    (m-2-2) edge (m-1-2)
    (m-2-2) edge (m-1-3)
    (m-2-6) edge (m-1-5)
    (m-2-6) edge (m-1-6)
    (m-2-6) edge (m-1-7)
  ;

  \draw[->, dashed, bend left=60] (m-2-6.west)+(0.5cm, -0.5cm) to (m-2-5);
\end{tikzpicture}
\end{center}
\caption{\label{fig:stepj}Illustration of Step $j$, for $j=n, n-1,n-2,\dots,1,0$, in the proof of Theorem \ref{th:lift}. The vertical arrows illustrate the application of Theorem \ref{T:liftone} to $(\Gi{\bH}{j}\subseteq (\Gi{\bG}{j+1,0})^{[\Gamma]}, y, t_j, \phii{\varphi}{j+1}^{-1}\phii{\xi}{j})$ with $s=t_{j-1}$ (setting $\varphi_{n+1}=\triv$ and $t_{-1} = 0$), which makes the correction factor $\phii{\varphi}{j}$ appear. If $j \geq 1$, the inverse of this correction factor then gets folded into $\phii{\xi}{j-1}$ in preparation for Step $j-1$, as illustrated by the dashed arrow.}
\end{figure}

\begin{proof}[Proof of Theorem \ref{th:lift}]
Let $\bH = \bG^{[\Gamma]}$, and let $\Delta = (\vec{\bH},y,\vec{t},\vec{\xi})$ be a character-datum for $H$, where
$$
\Gi{\bH}{0} \subsetneq \Gi{\bH}{1}\subsetneq \cdots \subsetneq \Gi{\bH}{n} \subseteq \Gi{\bH}{n+1}=\bH
$$
is a sequences of twisted Levi subgroups 
and for $0\leq i \leq n$, the quasicharacters $\phii{\xi}{i}$ of $\Gi{H}{i}$ are $\Gi{H}{i+1}$-generic of depth $t_i$.

We construct a character-datum $\Sigma$ for $G$ in $n+1$ iterative steps, indexed by $j$ decreasing from $n$ to $0$, as follows.

\underline{Initial Step ($j=n$):} Set $\tilde{\phii{\xi}{n}} = \phii{\xi}{n}$. Apply Theorem \ref{T:liftone} with $\bG = \bG$, $\bH' = \Gi{\bH}{n}$, $x=y$ $\xi = \tilde{\phii{\xi}{n}}$ and $s = t_{n-1}$ to produce 
a $\Gamma$-stable character-datum for $G$ together with a quasicharacter $\phii{\varphi}{n}$ of $\Gi{\bH}{n}$ of depth at most $t_{n-1}$.  Specifically, we have a twisted Levi sequence $$\Gi{\bG}{n,0}\subsetneq \Gi{\bG}{n,1}\subsetneq \cdots \subsetneq \Gi{\bG}{n,d_n}\subseteq\Gi{\bG}{n,d_n+1} = \bG$$  such that $(\Gi{\bG}{n,i})^{[\Gamma]} = \Gi{\bH}{n}$ for all $0\leq i\leq d_n$, as well as, for each $0\leq i\leq d_n$,  a $\Gi{G}{n,i+1}$-generic quasicharacter  $\phii{\phi}{n,i}$ of $\Gi{G}{n,i}$ of depth $r_{n,i}$, where $t_{n-1} < r_{n,0} < \cdots < r_{n,d_n} = t_n$. Furthermore, $\displaystyle \phii{\varphi}{n}\phii{\tilde{\xi}}{n} = \prod_{i=0}^{d_n}\phii{\phi}{n,i}|_{\Gi{H}{n}}.$

\underline{Step $j$, $j=n-1, n-2, \dots, 1,0$:} Set $\phii{\tilde{\xi}}{j} = (\phii{\varphi}{j+1}^{-1}|_{\Gi{H}{j}}) \phii{\xi}{j}$, which is a quasicharacter of $\Gi{H}{j}$. Since $\phii{\varphi}{j+1}$ is a quasicharacter of the larger group $\Gi{H}{j+1}$, the genericity of $\phii{\xi}{j}$ implies that $\phii{\tilde{\xi}}{j}$ is also $\Gi{H}{j+1}$-generic of depth $t_{j}$. Apply Theorem \ref{T:liftone} to $\bG = \Gi{\bG}{j+1,0}$, $\bH' = \Gi{\bH}{j}$, $x=y$, $\xi = \phii{\tilde{\xi}}{j}$ and $s=t_{j-1}$ (setting $t_{-1} = 0$) to produce a $\Gamma$-stable character-datum for $\Gi{G}{j+1,0}$ and a quasicharacter $\phii{\varphi}{j}$ of $\Gi{H}{j}$ of depth at most $t_{j-1}$.  That is, we have a twisted Levi sequence
$$
\Gi{\bG}{j,0}\subsetneq \Gi{\bG}{j,1}\subsetneq \cdots \subsetneq \Gi{\bG}{j,d_{j}}\subseteq \Gi{\bG}{j,d_{j}+1} =\Gi{\bG}{j+1,0}
$$ 
of $\Gi{\bG}{j+1,0}$ 
(and therefore of $\bG$) such that 
$(\Gi{\bG}{j,i})^{[\Gamma]} = \Gi{\bH}{j}$
for all $0\leq i\leq d_{j}$. 
In this case, since 
$(\Gi{\bG}{j,d_j+1})^{[\Gamma]} = \Gi{\bH}{j+1}$, the final inclusion is also strict. 
Moreover, for each $0\leq i\leq d_{j}$ we have   a 
$\Gi{G}{j,i+1}$-generic quasicharacter  $\phii{\phi}{j,i}$ of 
$\Gi{G}{j,i}$ of depth $r_{j,i}$.  Here, $t_{j-1} < r_{j,0} < \cdots < r_{j,d_{j}} = t_{j}$, which ensures that $r_{j,d_j}=t_j< r_{j+1,0}$.  Finally, we have $\displaystyle \phii{\varphi}{j}\phii{\tilde{\xi}}{j} = \prod_{i=0}^{d_{j}}\phii{\phi}{j,i}|_{\Gi{H}{j}}.$

At the conclusion of all these steps, we obtain:
$$\vec{\bG} = (\Gi{\bG}{0,0},\dots,\Gi{\bG}{0,d_0};\Gi{\bG}{1,0},\dots,\Gi{\bG}{1,d_1};\dots; \Gi{\bG}{n,0},\dots,\Gi{\bG}{n,d_n}),$$
a strictly increasing sequence of twisted Levi subgroups of $\bG$;
$$\vec{r} = (r_{0,0},\dots,r_{0,d_0};r_{1,0},\dots,r_{1,d_1};\dots;r_{n,0},\dots,r_{n,d_n}),$$
a strictly increasing sequence of positive real numbers such that for each $0\leq j\leq n$ we have $r_{j,d_j}=t_j$; and
$$\vec{\phi} = (\phii{\phi}{0,0},\dots,\phii{\phi}{0,d_0};\phii{\phi}{1,0},\dots,\phii{\phi}{1,d_1};\dots; \phii{\phi}{n,0},\dots,\phii{\phi}{n,d_n}),$$
a sequence of quasicharacters satisfying (CD4).  Thus $\Sigma = (\vec{\bG},x,\vec{r},\vec{\phi})$ is a character-datum for $G$; since the character-data at each step was $\Gamma$-stable, $\Sigma$ is $\Gamma$-stable as well.

It remains to prove that $\Sigma^\Gamma$ is a refactorization of $\Delta$, in the sense of Definition \ref{def:refactorization}.  We have $\Sigma^\Gamma = (\vec{\bH},x,\vec{t},\vec{\xi'})$, where $\vec{\xi'} = (\phii{\xi}{0}',\dots,\phii{\xi}{n}')$ with $\displaystyle\phii{\xi}{j}' = \prod_{i=0}^{d_j}\phii{\phi}{j,i}|_{\Gi{H}{j}} = \phii{\varphi}{j}\phii{\tilde{\xi}}{j}$.
Since by construction
$$\phii{\xi}{j}' = \phii{\varphi}{j}\phii{\tilde{\xi}}{j} = \begin{cases}
\phii{\varphi}{j}\phii{\varphi}{j+1}^{-1}\phii{\xi}{j} \text{ if } 0\leq j\leq n-1, \\
\phii{\varphi}{n}\phii{\xi}{n} \text{ if } j=n,
\end{cases}$$
it follows that, for all $0\leq i\leq n$, 
$$
\left(\prod_{j=i}^n\phii{\xi}{j}'\phii{\xi}{j}^{-1}\right)\Big|_{\Gi{H}{i}_{t_{i-1}^+}} = \phii{\varphi}{i}|_{\Gi{H}{i}_{t_{i-1}^+}} = 1,$$
as required. 
Since $\Sigma^\Gamma$ is a refactorization of $\Delta$, Lemma~\ref{lem:equivchardata} ensures that $\groupp{\Sigma^\Gamma} = \groupp{\Delta}$ and $\chara{\Sigma^\Gamma}=\chara{\Delta}$, which completes the proof of the theorem.
\end{proof}

Let us illustrate Theorem~\ref{th:lift} in the context of our continuing example by producing a lift in that case.

\begin{example}
Let $G=\GL(4,F)$, $H=G^{[\Gamma]}=\GL(2,F)\times\GL(2,F)$, and $\Delta$ as in Example~\ref{Eg:incompatible}, where $\chi_+(\chi_-)^{-1}$ has depth $r_{1,0}$ satisfying $t_0<r_{1,0}<t_1$. 
Here $n=1$ so the initial step $j=1$ yields
$$
\bG^{1,0}=\bH \subsetneq \bG^{1,1}=\bG^{1,2}=\bG,
$$
which satisfy $(\bG^{1,i})^{[\Gamma]}=\bH$ and quasicharacters $\phi_{1,0}$ of $H$, $\phi_{1,1}$ of $G$, and $\varphi_1$ of $H$ such that $\varphi_1\xi_1 = \phi_{1,0}(\phi_{1,1}|_{H})$.

At Step $j=0$, we begin with the quasicharacter $\tilde{\xi}_0=\varphi_1^{-1}|_{T\times T}(\eta\otimes \eta)$, which remains $H$-generic of depth $t_0$.  Applying Theorem~\ref{T:liftone} to $(\bG=\bH, \bH'=\bT\times\bT,\xi=\tilde{\xi}_0, s=0)$ thus simply yields $\bG^{0,0}=\bH^0$ and a character $\phi_{0,0}$ that equals $\tilde{\xi}_0$ up to a depth-zero twist $\varphi_0$.

Our result is thus the sequence $\bT\times \bT \subset \bH \subset \bG$ with the characters above.  More generally, all valid twisted Levi sequences drawn from $\bT\times \bT, \bH, \Res_{E/F}\GL(2), \bG$ do occur for some choices of the quasicharacters $\xi_0,\xi_1$ in $\Delta$.
\end{example}

We are now in a position to prove that every $\Gamma$-stable character-datum for $G$ arises as the lift of some character-datum for $G^{[\Gamma]}$.  We extract this observation from the proof of Theorem~\ref{th:lift}, together with Proposition~\ref{prop:resthenliftsingle}.

Let $\Delta$ be a character-datum for $H$ of length $n+1$.  To enumerate the many character-data for $G$ that are produced by Theorem~\ref{th:lift},  
we follow the $n+1$ iterative steps, which are indexed by $j$ decreasing from $n$ to $0$. Step $n$ consists of choosing $(\Sigma_n,\varphi_n)\in\mathrm{Lift}(\Delta_n)$, where $\Delta_n = (\Gi{\bH}{n}\subseteq \bH, y, t_n, \phii{\xi}{n})$. Every following step $j$ for $0\leq j\leq n-1$ then consists of choosing $(\Sigma_j, \varphi_j)\in \mathrm{Lift}(\Delta_j)$, where $\Delta_j = (\Gi{\bH}{j}\subseteq \Gi{\bH}{j+1},y, t_j, \phii{\varphi}{j+1}^{-1}\phii{\xi}{j})$. The resulting character-datum $\Sigma$ for $G$ is then the concatenation of $\Sigma_j, 0\leq j\leq n$. Letting $\mathrm{Lift}(\Delta)$  denote the set of character-data for $G$ obtained by varying the elements $(\Sigma_j,\varphi_j)\in \mathrm{Lift}(\Delta_j), 0\leq j\leq n$, we have the following proposition, which guarantees that our lift of character-data is compatible with the $\Gamma$-fixed point restriction. 

\begin{proposition}\label{prop:resthenlift}
Let $\Sigma$ be a $\Gamma$-stable character-datum for $G$. Then $\Sigma\in \mathrm{Lift}(\Sigma^\Gamma)$. In particular, every $\Gamma$-stable character-datum for $G$ is a lift of a character-datum for $H$.
\end{proposition}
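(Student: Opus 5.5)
We will decompose $\Sigma$ into blocks according to the fixed-point groups, and show that each block is an element of the single-quasicharacter lift set $\mathrm{Lift}$ of the corresponding piece of $\Sigma^\Gamma$, with trivial correction factor. Write $\Sigma=(\vec{\bG},y,\vec{r},\vec{\phi})$. By Lemmas~\ref{lem:equivchardata} and \ref{lem:gammafixedy} we may assume $y\in\buil(G)^\Gamma$. The fixed-point groups $(\bG^k)^{[\Gamma]}$ form a weakly increasing chain. Group consecutive indices with equal fixed-point group into blocks $B_0,\dots,B_m$, where block $B_j$ has fixed-point group $\bH^j$. Then, as in Theorem~\ref{th:restriction},
$$\Sigma^\Gamma=(\vec{\bH},y,\vec{t},\vec{\xi}),\qquad \xi_j=\prod_{k\in B_j}\phi_k|_{H^j},$$
and $t_j$ is the largest $r_k$ with $k\in B_j$. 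Let $\Sigma_j$ be the subdatum of $\Sigma$ indexed by $B_j$, viewed as a character-datum for the ambient group $\bG^{j+1,0}$, namely the first group of the next block (or $\bG$ when $j=m$). We must check that $\Sigma_j$ is $\Gamma$-stable and that $\Sigma_j^\Gamma=(\bH^j\subseteq (\bG^{j+1,0})^{[\Gamma]},y,t_j,\xi_j)$. Since $(\bG^{j+1,0})^{[\Gamma]}=\bH^{j+1}$, this is a single-quasicharacter character-datum. Both checks are immediate from Definition~\ref{def:gammastabledatum} and Theorem~\ref{th:restriction}, because genericity and realization are inherited from $\Sigma$.

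The key point is the choice of correction factors. In the definition of $\mathrm{Lift}(\Delta)$ the datum at step $j$ is $\Delta_j=(\bH^j\subseteq\bH^{j+1},y,t_j,\varphi_{j+1}^{-1}\xi_j)$. We take every $\varphi_j=\triv$, so that $\Delta_j=\Sigma_j^\Gamma$ at each step, and proceed by descending induction on $j$ from $m$ to $0$. At step $j$, Proposition~\ref{prop:resthenliftsingle}, applied with $\bG^{j+1,0}$ in place of $\bG$, gives $(\Sigma_j,\triv)\in\mathrm{Lift}(\Sigma_j^\Gamma)=\mathrm{Lift}(\Delta_j)$. The next datum is then $\Delta_{j-1}$ with the twist $\varphi_j^{-1}=\triv$, which is again $\Sigma_{j-1}^\Gamma$, so the induction continues. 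Concatenating the $\Sigma_j$ recovers $\Sigma$, hence $\Sigma\in\mathrm{Lift}(\Sigma^\Gamma)$.

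The main obstacle is a compatibility of parameters. The proof of Theorem~\ref{th:lift} applies Theorem~\ref{T:liftone} at step $j$ with a prescribed $s=t_{j-1}$, which forces every depth in the lift to exceed $t_{j-1}$. Proposition~\ref{prop:resthenliftsingle}, by contrast, was proved with $s=0$, while $\mathrm{Lift}(\Delta)$ in the single-quasicharacter setting lets $s$ vary. We therefore rerun the argument of Proposition~\ref{prop:resthenliftsingle} with $s=t_{j-1}$, taking $s=0$ only for $j=0$. All depths in block $B_j$ exceed $t_{j-1}$, since the depths $\vec{r}$ are strictly increasing and $t_{j-1}$ is the top depth of $B_{j-1}$. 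Consequently the algorithm still takes $X^\sharp_i$ equal to the successive $X_k$ in the block and stops exactly when $\tau=\triv$. The reason is that each partial product $\tau_i$ has depth some $r_k>t_{j-1}$ until the block is exhausted. This yields the output $(\Sigma_j,\triv)$, as required.

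A second point to verify is that the choices in Proposition~\ref{prop:resthenliftsingle} are valid relative to the ambient group $\bM=\bG^{j+1,0}$. Specifically, each $X_k$ must be $M_i$-generic, which holds because $\Sigma$ satisfies (CD4). Each $X_k$ must also be $\Gamma$-fixed with the correct centralizer, which Lemma~\ref{lem:defstabilityequiv} supplies. The final assertion of the proposition is then immediate, since $\Sigma^\Gamma$ is a character-datum for $H$ by Theorem~\ref{th:restriction}.
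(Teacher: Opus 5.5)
Your proposal is correct and is essentially the paper's argument: the paper's proof consists of applying Proposition~\ref{prop:resthenliftsingle} at each step $j$ of the iterative construction with all correction factors $\varphi_j=\triv$, so that each $\Delta_j$ is the fixed-point datum of the corresponding block of $\Sigma$, which is exactly what you spell out in more detail. Your rerun with $s=t_{j-1}$ is not strictly needed, since the single-quasicharacter set $\mathrm{Lift}$ already lets $s$ vary, but it is valid and makes the concatenation match how Theorem~\ref{th:lift} is actually carried out.
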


The proof of this proposition consists of applying Proposition~\ref{prop:resthenliftsingle} at each step of the above construction.

\subsection{Relations to other constructions}\label{sec:otheraves}

We end this section by discussing connections between the constructions of this paper and other standard constructions in the literature, such as Howe factorizations \cite[Section 3.6]{Kaletha2019} and $G$-factorizations \cite[Section 5]{Murnaghan2011}.  

Recall that one of the key ingredients for proving our main lifting theorem was the construction of a character-datum that lifts a single quasicharacter (Theorem \ref{T:liftone}). Let us describe the output of Theorem \ref{T:liftone} when we set $s=0$ for particular choices of $\bH'\subseteq \bH$:
\begin{enumerate}[1)]
\item Setting $\bH'=\bS$ to be a maximal torus of $\bH$, then since $\Cent_\bG(\bS) = \bT\subseteq \bG^0$, then for any quasicharacter $\xi$ of $S$ the output of Theorem \ref{T:liftone} is a depth-zero quasicharacter $\varphi$ of $S$ together with a quasicharacter $\phi = \prod_{i=0}^n\phii{\phi}{i}|_{T}$ of $T$ such that $\phi|_S=\varphi \xi$.  This is
essentially a Howe factorization in the sense of \cite[Definition 3.6.2]{Kaletha2019} of some extension of $\xi$ to $T$. Indeed, by extending $\varphi$ to a depth-zero quasicharacter $\tilde{\varphi}$ of $T$, the quasicharacter $\tilde{\xi} = \tilde{\varphi}^{-1}\phi$ 
is an extension of $\xi$. Using \cite[Lemma 3.7.2]{Kaletha2019} to normalize $(\phii{\phi}{0},\dots,\phii{\phi}{n})$ if necessary, $(\tilde{\varphi}^{-1},\phii{\phi}{0},\dots,\phii{\phi}{n})$ is then a Howe factorization of $(T,\tilde{\xi})$.

\item Setting $\bH' = \bH$, Theorem \ref{T:liftone} asserts that for any quasicharacter $\xi$ of $H$ (which is $H$-generic by default), there exists a quasicharacter $\phi = \prod_{i=0}^n\phii{\phi}{i}|_{\Gi{G}{0}}$ of $\Gi{G}{0}$, where $\Gi{\bG}{0}$ is a twisted Levi subgroup of $\bG$, and a depth-zero quasicharacter $\varphi$ of $H$ such that $\varphi\xi = \phi|_H$. Thus more generally, given $\bH'\subseteq \bH$, the output of Theorem \ref{T:liftone} is a $G$-factorization, in the sense of \cite[Definition 5.1]{Murnaghan2011}, of an extension of some depth-zero twist of $\xi$ to $\Gi{G}{0}$.
\end{enumerate}

Extending this to character-data for $H$, we can relate the output of Theorem \ref{th:lift} to Howe factorizations as follows.

\begin{proposition}
Let $\bS$ be a maximal torus of $\bH = \bG^{[\Gamma]}$ defined over $F$, and set $\bT = \Cent_\bG(\bS)$. Let $\Delta = (\vec{\bH},y,\vec{t},\vec{\xi})$ be a character-datum for $H$ such that $\bS\subset \bH^0$ and $y\in \buil(S)$.  Let $\Sigma = (\vec{\bG},y,\vec{r},\vec{\phi})$ be a corresponding lift to $G$ obtained via Theorem~\ref{th:lift}. Then there exist characters $\chi$, $\theta$ of $T$, with $\chi$ of depth zero, such that $(\chi,\vec{\phi})$ is a (refactorization of a) Howe factorization of $(T,\theta)$.
\end{proposition}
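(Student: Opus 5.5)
The plan is to read off the Howe factorization directly from the lift $\Sigma$, mirroring item 1) of the discussion above but for a full character-datum.

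\textbf{Step 1: the torus sits at the bottom of $\vec{\bG}$.} Since $\bS\subset\bH^0$ and every group in $\vec{\bG}$ is $\Gamma$-stable with $\Gamma$-fixed part containing $\bH^0\supseteq\bS$, each such group contains $\Cent_\bG(\bS)=\bT$. This is the argument used at the start of the proof of Proposition~\ref{P:indstep}. In particular $\bT\subseteq\Gi{\bG}{0,0}$, and each $\phi_{j,i}$ restricts to a character of $T$. I would set
$$
\phi:=\prod_{j,i}\phi_{j,i}|_T .
$$

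\textbf{Step 2: the depth-zero factor.} In the proof of Theorem~\ref{th:lift}, the final step $j=0$ uses $s=t_{-1}=0$. It therefore produces a correction factor $\varphi_0$ of $H^0$ of depth zero. The refactorization identity established there gives
$$
\prod_{j}\xi_j'=\varphi_0\prod_j\xi_j \quad\text{on } H^0 .
$$
Restricting to $S$ gives $\phi|_S=\varphi_0|_S\,\prod_j\xi_j|_S$. Next I would extend $\varphi_0|_S$ to a depth-zero character $\tilde\varphi$ of $T$. To do this, extend it trivially from $S_{0^+}\cap$ its kernel and use that $S_{0^+}=S\cap T_{0^+}$, which follows from $\buil(S)=\buil(T)^\Gamma$ and the depth-preserving embeddings. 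Then extend from $ST_{0^+}$ to $T$, using that $T$ is abelian, as in the proof of Theorem~\ref{T:charexists}. Set $\chi:=\tilde\varphi^{-1}$ and $\theta:=\chi\,\phi$. By construction $\theta|_S=\prod_j\xi_j|_S$, so $\theta$ extends the character of $S$ determined by $\Delta$.

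\textbf{Step 3: verify the Howe factorization axioms of \cite[Definition 3.6.2]{Kaletha2019}.} These are:
\begin{enumerate}[(i)]
\item $\vec{\bG}$ is a twisted Levi sequence containing $\bT$, which holds by Step 1;
\item each $\phi_{j,i}$ is a $\Gi{G}{j,i+1}$-generic character of $\Gi{G}{j,i}$ of strictly increasing depth, which holds by (CD4) and the increasing $\vec r$ in Theorem~\ref{th:lift};
\item $\chi$ has depth zero.
\end{enumerate}
The remaining normalization conditions in Kaletha's definition are the following. The last group equals $\bG$, which may require appending a trivial factor, as allowed. The depth of $\phi_{d}$ may equal that of the preceding factor, and genericity in Kaletha's sense is stated via $\bT$-roots, which matches Definition~\ref{def:genericel} by Remark~\ref{rem:genericgeneral}.

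The main obstacle is that Kaletha's definition requires each factor restricted to $T_{r^+}$-type subgroups to be trivial on the relevant derived pieces. A factorization produced by our construction need only agree with such a normalized one up to the refactorization equivalence of Definition~\ref{def:refactorization}. For this I would invoke \cite[Lemma 3.7.2]{Kaletha2019}, exactly as in item 1) above, to replace $\vec\phi$ by a refactorization $\vec\phi'$ with $\prod\phi'_{j,i}|_T=\phi$ that satisfies the normalization. This is why the statement says ``(refactorization of a)''.
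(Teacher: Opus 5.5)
Your proposal is correct and follows the paper's route. You set $\theta_T=\prod\phi_i|_T$ (using $\bT\subseteq\bG^{0}$), note that $\theta_S\theta_T^{-1}|_S$ has depth zero, extend it trivially across $T_{0^+}$ and then to all of $T$ to get $\chi$, put $\theta=\chi\theta_T$, and normalize via Kaletha's Lemma 3.7.2.

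The one difference is where the depth-zero discrepancy comes from. You identify it exactly as $\varphi_0^{-1}|_S$ through the telescoping identity $\prod_j\xi_j'=\varphi_0\prod_j\xi_j$ in the proof of Theorem~\ref{th:lift}. The paper only uses that $\theta_S$ and $\theta_T$ agree on $S_{0^+}$, which it reads off from $\chara{\Delta}=\chara{\Sigma}|_{\groupp{\Delta}}$. As a result, the paper's argument applies to any lift with that restriction property, not only to lifts built by the explicit construction.
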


\begin{proof}
Let $n+1$ and $d+1$ be the lengths of the character-data $\Delta$ and $\Sigma$, respectively. Define two characters $\theta_S$ and $\theta_T$ of $S$ and $T$, respectively, by
$$\theta_S = \prod_{i=0}^n\phii{\xi}{i}|_S \text{ and } \theta_T = \prod_{i=0}^d\phii{\phi}{i}|_T.$$ Upon normalizing the sequences $\vec{\xi}$ and $\vec{\phi}$ if necessary \cite[Lemma 3.7.2]{Kaletha2019}, we have that $(\phii{\xi}{-1} = 1,\vec{\xi})$ and $(\phii{\phi}{-1}=1,\vec{\phi})$ are Howe factorizations of $(S,\theta_S)$ and $(T,\theta_T)$, respectively. Furthermore, $\theta_T$ and $\theta_S$ agree on $S_{0^+}$, since $\chara{\Delta} = \chara{\Sigma}|_{\groupp{\Delta}}$ and $S_{0+}\subset \groupp{\Delta}$.  Extend the depth-zero character $\theta_S\theta_T^{-1}|_S$ trivially across $T_{0+}$ and choose an extension to a depth-zero quasicharacter $\chi$ of $T$. Then  $\theta = \chi\theta_T$ is an extension of $\theta_S$ to $T$ and $(\chi,\vec{\phi})$ is a Howe factorization of $(T,\theta)$.
 \end{proof}

 The  previous proposition suggests a different method for constructing a character-datum for $G$ from a character-datum for $H$. Indeed, beginning with a character-datum $\Delta = (\vec{\bH},y,\vec{t},\vec{\xi})$ for $H$ of length $n+1$, set $\theta = \prod_{i=0}^n\phii{\xi}{i}|_S$. Extend $\theta_S$ to a character $\theta_T$ of $T$. Follow \cite[Section 3.6]{Kaletha2019} to unfold the pair $(T,\theta_T)$ into a twisted Levi sequence $\vec{\bG}$ in $\bG$, and produce a Howe factorization $(\phii{\phi}{-1},\vec{\phi})$ of $(T,\theta_T)$ with corresponding depth sequence $(0,\vec{r})$. The resulting tuple $\Sigma = (\vec{\bG},y,\vec{r},\vec{\phi})$ is then a character-datum for $G$. This construction is summarized in Figure~\ref{fig:otherconstruction}.

\begin{figure}[!htbp]
\begin{center}
\begin{tikzcd}[row sep=4em, column sep=6em]
{\Delta=(\vec{\bH},y,\vec{t},\vec{\xi})}\arrow[swap]{d}{\displaystyle \theta_S := \prod_{i=0}^n\phii{\xi}{i}|_S} &{\Sigma = (\vec{\bG},y,\vec{r},\vec{\phi})} \\
{(S,\theta_S)}\arrow{r}{\parbox{2cm}{\centering \small extend}} &{(T,\theta_T)}\arrow[swap]{u}{\parbox{2cm}{\centering \small Howe \\ factorization}} 
\end{tikzcd}
\end{center}
\caption{\label{fig:otherconstruction}Construction of a character-datum for $G$ from a character-datum for $H$ using a Howe factorization.}
\end{figure}

However, not every choice of extension $\theta_T$ of $\theta_S$ will result in a $\Gamma$-stable character-datum $\Sigma$, 
as illustrated in the following example.  That is, for such choices of $\theta_T$, the corresponding semisimple characters $\chara{\Delta}$ and $\chara{\Sigma}$ are incompatible.

\begin{example}
Let $G=\GL(4,F)$, $p\neq 2$.  Choose $J=\left(\begin{smallmatrix}0&I_2\\-I_2&0\end{smallmatrix}\right)$; set $\gamma(g)={}^tJ({}^tg^{-1})J$ and $\Gamma = \{1,\gamma\}$. Then we have $H=G^{[\Gamma]}=\Sp(4,F)$.  Let $E/F$ be a quadratic extension and set $\bT_0 = \Res_{E/F}\bG_m$.  Then we can choose a maximal torus $\bS\cong \bT_0$ of $\bH$, embedded as a pair of block diagonal matrices $\diag(a, {}^ta^{-1})$ with $a\in \bT_0$, whose centralizer in $\bG$ is $\bT = \{\diag(a,b)\mid a,{}^tb\in \bT_0\} \cong \bT_0\times \bT_0$. 

Let $r>0$ and choose  quasicharacters $\eta$ of $F^\times$ and $\phi$ of $T_0$ of depth $r$ so that 
the quasicharacter $\theta_S$ of $S$ defined by $\theta_S(a,{}^ta^{-1}) = \phi(a)\eta(\Norm(a))^{-1}$ is $H$-generic of depth $r$.  
The quasicharacter
$$
\theta_T = (\phi, \eta\circ \Norm) : T_0\times T_0 \to \mathbb{C}^\times
$$
is an extension of $\theta_S$ to $T$.   It is not $G$-generic, but by construction $\theta_T$ is the restriction to $T$ of the $G$-generic quasicharacter $(\phi,\eta\circ\det)$ of $E^\times \times GL(2,F)$.  Thus the twisted Levi sequence corresponding to any Howe factorization of $\theta_T$ will include the group $\bT_0\times \GL(2)$, which is not $\Gamma$-stable.

In contrast, applying Theorem~\ref{T:liftone} to a corresponding \singlequasicharacter\ datum $\Delta=(\bS\subset \bH, x, r, \theta_S)$ yields $\Sigma = (\bT\subset \bG, x, r, \zeta)$ where $\zeta$ is a quasicharacter of $T$ realized by an element of $\LieS^*_{-r}$.  That is, $\zeta=(\xi,\xi^{-1})$ where $\xi$ a quasicharacter of $E^\times$, and furthermore, such that $\xi^2|_{E^\times_0}=\phi \otimes \eta^{-1}\circ \Norm|_{E^\times_0}$.  
\end{example}

The roadblock illustrated in this example is the lack of control one has when choosing $\theta_T$ arbitrarily:  the subspaces of $\LieT^*$ containing elements which realize the quasicharacters in a Howe factorization of  $\theta_T$ are not constrained by $\theta_S$.  In the construction of Theorem~\ref{th:lift}, this control is exerted on each factor of the Howe factorization through Theorem~\ref{T:charexists}.


\providecommand{\bysame}{\leavevmode\hbox to3em{\hrulefill}\thinspace}
\providecommand{\MR}{\relax\ifhmode\unskip\space\fi MR }
\providecommand{\MRhref}[2]{%
  \href{http://www.ams.org/mathscinet-getitem?mr=#1}{#2}
}
\providecommand{\href}[2]{#2}

\end{document}